\DeclareMathAlphabet{\mathpzc}{OT1}{pzc}{m}{it}
\newcommand{\got}{\mathfrak}
\newtheorem{theorem}{Theorem}[section]
\newtheorem*{theorem*}{Theorem}
\newtheorem{proposition}[theorem]{Proposition}
\newtheorem{lemma}[theorem]{Lemma}
\newtheorem*{lemma*}{Lemma}
\newtheorem{corollary}[theorem]{Corollary}
\newtheorem*{conjecture*}{Conjecture}
\theoremstyle{definition}
\newtheorem{definition}[theorem]{Definition}
\theoremstyle{remark}
\newtheorem{remark}[theorem]{Remark}
\DeclareMathOperator{\spam}{span}
\DeclareMathOperator{\ad}{ad}
\DeclareMathOperator{\rk}{rk}
\DeclareMathOperator{\tr}{tr}
\DeclareMathOperator{\diag}{diag}
\numberwithin{equation}{section}
\begin{document}


\title[Invariant almost complex geometry on flag manifolds]{Invariant almost complex geometry on flag manifolds: geometric formality and Chern numbers}
\author{Lino Grama, Caio J.C. Negreiros and Ailton R. Oliveira}
\address{Department of Mathematics - IMECC, University of Campinas - Brazil}
\email{linograma@gmail.com, caione@ime.unicamp.br, ailton\_rol@yahoo.com.br }
\thanks{This research was supported by CNPq grant 476024/2012-9 and Fapesp grant no. 2012/18780-0. LG is also supported by Fapesp grant no. 2014/17337-0.}
\begin{abstract}
In the first part of this paper we study geometric formality for generalized flag manifolds, including full flag manifolds of exceptional Lie groups. In the second part we deal with the problem of the classification of invariant almost complex structures on generalized flag manifolds using topological methods.
\end{abstract}
\maketitle
\section{Introduction}
In this paper we study geometric formality and classification of almost complex structures on generalized flag manifolds (or K\"ahler C-spaces). This class of homogeneous spaces is defined taking the quotient $G/P$ of a complex simple non-compact Lie group $G$ by the normalizer of a parabolic sub-algebra $\mathfrak{p}$ of the Lie algebra $\mathfrak{g}=Lie(G)$. Equivalently, a generalized flag manifold is defined as $U/K$, where $U$ is the maximal compact sub-group of $G$ and $K=P\cap U$ is a centralizer of a torus. It is well known that generalized flag manifolds have a rich Riemannian and Hermitian geometry (see for instance \cite{besse}).

In the first part of this paper we study the problem of geometric formality for generalized flag manifolds. On a general Riemannian manifold, wedge products of harmonic forms are not usually harmonic. But there are some examples where this does happen, like compact globally symmetric spaces. Motivated by examples of closed surfaces of genus $\geq 2$ (in this case there are non-trivial harmonic 1-form for any metric, but every 1-form has zeros), Kotschick in \cite{Kots} introduced the notion of geometrically formal manifolds: a smooth manifold is geometrically formal if it admits a Riemannian metric for which all exterior products of harmonic forms are harmonic. 

Classical examples of geometrically formal manifolds are compact symmetric spaces and Stiefel manifolds (real, complex, quaternionic and octonionic). Geometric formality implies the formality in the sense of Sullivan, and in fact it is more restrictive. For instance, in \cite{Kote} Kotschick-Terzi\'c proved that all generalized symmetric spaces of compact simple Lie groups are formal in the sense of Sullivan, and that many of them are not geometrically formal.

Regarding non-geometric formality on flag manifolds the following examples are already known: full flag manifolds $G/T$ where $G$ is a classical Lie group ($SU(n), SO(n), Sp(n)$) or $G=G_2$ (\cite{Kote}); the family of generalized flag manifolds $SU(n+2)/S(U(n)\times U(1)\times U(1))$ (\cite{princ}); Wallach flag manifolds with positive sectional curvature (\cite{AZ}).

Our first result proves the non-geometric formality for full flag manifolds associated to exceptional Lie groups. Since these Lie groups complete the list of compact simple Lie groups, one can state the following result:

\begin{theorem*}
The full flag manifolds $G/T$, with $G$ compact simple Lie groups are not geometrically formal.
\end{theorem*}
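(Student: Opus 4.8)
The plan is to argue by contradiction: assume that $G/T$ admits a geometrically formal metric $g$ and extract from this a rigid pointwise structure on the harmonic forms that is incompatible with the cohomology ring of $G/T$. Since the statement is already known for the classical groups $SU(n)$, $SO(n)$, $Sp(n)$ and for $G_2$ by the cited work \cite{Kote}, it suffices to treat the remaining exceptional types $F_4$, $E_6$, $E_7$ and $E_8$; disposing of these four cases is exactly what promotes the partial results to the uniform statement. Throughout I would write $n=\dim G/T=2p$, where $p$ is the number of positive roots, and recall the Borel presentation: the real cohomology $H^\ast(G/T)$ is concentrated in even degrees and is generated as an algebra by $H^2(G/T)\cong\mathfrak t^\ast$. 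The first input is Kotschick's structural result \cite{Kots}: for a geometrically formal metric the space of harmonic forms $\mathcal H^\ast(M,g)$ is a graded subalgebra of $\Omega^\ast(M)$ isomorphic as a ring to $H^\ast(M)$, and moreover every harmonic form has constant pointwise norm, the latter following from the fact that $\alpha\wedge{\ast}\alpha=|\alpha|^2\,\mathrm{vol}$ is again harmonic.

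The second step is to exploit the constant-norm property through the wedge powers of a single harmonic $2$-form, which is the crucial mechanism. If $\omega\in\mathcal H^2$, then each power $\omega^m$ is harmonic and hence of constant norm; writing $\omega$ in a pointwise orthonormal frame as $\omega=\sum_{j=1}^{p}\lambda_j\,e_{2j-1}\wedge e_{2j}$ with real $\lambda_j$, one has $|\omega^m|^2=(m!)^2\,\sigma_m(\lambda_1^2,\dots,\lambda_p^2)$, where $\sigma_m$ is the $m$-th elementary symmetric polynomial. Constancy of all these norms forces the unordered tuple $(\lambda_1^2,\dots,\lambda_p^2)$ to be independent of the point. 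In particular one obtains a sharp dichotomy: for each harmonic $2$-form $\omega$, either $\omega^{p}\equiv0$ (equivalently $[\omega]^p=0$, some eigenvalue vanishing identically) or $\omega^{p}$ is nowhere zero with $\prod_j\lambda_j^2$ a nonzero constant. The AM--GM estimate $\prod_j\lambda_j^2\le\bigl(|\omega|^2/p\bigr)^p$, strict unless all $\lambda_j^2$ coincide, together with the pointwise Cauchy--Schwarz equality case for $\omega^p=c\,\mathrm{vol}$, then pins the admissible eigenvalue data down to a very restricted range.

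The third step is to confront this rigidity with the ring structure of $G/T$. Under the Borel presentation the top self-intersection of $x\in H^2(G/T)\cong\mathfrak t^\ast$ is, up to a positive constant, the Weyl polynomial
\[
\int_{G/T} x^{p}\ \propto\ \prod_{\alpha>0}\langle x,\alpha\rangle,
\]
whose zero locus is precisely the union of the root hyperplanes in $\mathfrak t^\ast$. Thus a generic $x$ yields $[x]^p\neq0$ and hence a nondegenerate harmonic representative with constant nonvanishing eigenvalues, whereas $x$ lying on a root hyperplane yields $[x]^p=0$ and a degenerate representative. I would then run a line $x=\omega+t\eta$ of harmonic $2$-forms through $\mathfrak t^\ast$ meeting several root hyperplanes at distinct parameters and compare, on the one hand, the eigenvalue tuples forced to be spatially constant for every fixed $t$ with, on the other hand, their continuous dependence on $t$ dictated by the polynomial $t\mapsto\prod_{\alpha>0}\langle\omega+t\eta,\alpha\rangle$ and by the mixed top products $\int_{G/T}(\omega+t\eta)^{a}\wedge\zeta^{\,p-a}$; the incompatibility between the AM--GM equality constraint on the nondegenerate members and the prescribed vanishing pattern along the root arrangement is the desired contradiction. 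The main obstacle, and the reason the argument must proceed type by type, is that turning this scheme into an actual contradiction amounts to a finite but delicate combinatorial and linear-algebraic check on the positive root systems of $F_4$, $E_6$, $E_7$ and especially $E_8$, whose large rank and intricate root geometry make the verification of the relevant intersection numbers the genuinely laborious part; I expect a judicious choice of a low-dimensional subsystem or of a single pair of well-separated root hyperplanes to keep each computation tractable.
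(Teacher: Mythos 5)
Your proposal does not reach a proof: the contradiction that everything hinges on is never actually derived. Steps one and two (harmonic forms of a formal metric form a subalgebra, constant pointwise norms, hence the eigenvalue multiset $(\lambda_1^2,\dots,\lambda_p^2)$ of a harmonic $2$-form is spatially constant, giving the dichotomy $\omega^p\equiv 0$ or $\omega^p$ nowhere zero) are correct, but they hold on \emph{every} geometrically formal manifold and carry no information yet about $G/T$. The entire burden is then placed on step three, which is only a gesture: you assert that an ``incompatibility between the AM--GM equality constraint on the nondegenerate members and the prescribed vanishing pattern along the root arrangement'' yields a contradiction, but no such equality constraint has been established --- harmonicity of $\omega^p$ only says $\omega^p=p!\,(\lambda_1\cdots\lambda_p)\,\mathrm{vol}$ with constant coefficient, which forces nothing like equality in AM--GM --- and nothing in the pencil argument $\omega+t\eta$ is shown to be inconsistent: spatially constant eigenvalue tuples varying continuously in $t$ and degenerating exactly on the root hyperplanes is, as far as your argument shows, a perfectly consistent scenario. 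Deferring this to ``a finite but delicate combinatorial and linear-algebraic check'' on the root systems of $F_4$, $E_6$, $E_7$, $E_8$ is deferring precisely the mathematical content of the theorem; it is not clear the check would even close in this form.

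For comparison, the paper's proof avoids all pointwise eigenvalue analysis of the exceptional spaces. For each exceptional group it picks a fibration whose fiber is a \emph{classical} full flag manifold, e.g.\ $Spin(9)/T\to F_4/T\to F_4/Spin(9)$ and $SU(k)/T\to E_k/T\to E_k/(SU(k)\times U(1))$ for $k=6,7,8$. Since total space and fiber are $k$-symmetric of equal rank, a lemma of Kotschick--Terzi\'c guarantees that restriction to the fiber is surjective in cohomology. One then takes harmonic representatives (for a hypothetical formal metric) of the generators $x_1,\dots,x_n$ of the fiber's cohomology; geometric formality makes the classical Borel relations hold at the level of forms, and the already-proved pointwise lemmas of Kotschick--Terzi\'c (their Lemmas for the $A_n$ and $B_n/C_n$ relations) force the top product $x_1\wedge x_2^2\wedge\cdots$ (resp.\ $x_1\wedge x_2^3\wedge\cdots$) to vanish identically upon restriction to the fiber, contradicting surjectivity. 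In other words, the paper reduces the exceptional cases to the classical computations that are already done, whereas your plan would require redoing (and substantially extending) that hard pointwise analysis directly on the exceptional root systems. If you want to salvage your approach, the realistic route is exactly this reduction: restrict to a sub-flag-manifold of classical type on which the vanishing lemmas are known, rather than attack the full root arrangement of $E_8$.
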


We also provide a large family of examples of non-geometrically formal flag manifolds (for details, see Section \ref{sec-nonf}).

In the second part of the paper, we study Chern numbers of invariant almost complex structures on generalized flag manifolds. The classification of Hermitian structures on full flag manifolds was carried out by San Martin-Negreiros in \cite{SM}. In \cite{rita} San Martin-Silva study the invariant Nearly-K\"ahler structures on flag manifolds. We remark that in both works cited above, the Lie theoretical methods was used in a crucial way. On other hand, the classification of invariant  Hermitian structures on generalized flag manifolds remain an open problem. Therefore, it is a natural question to classify the invariant almost complex structures (or more generally Hermitian structures) on generalized flag manifolds. 

In this work we use characteristic classes in order to classify these invariant almost complex structures in some flag manifolds. More precisely, using the Chern numbers joint with a classical result due to Borel-Hirzebruch we obtain in some cases the classification of such invariant almost complex structures (up to conjugation and equivalence).
\begin{theorem*}
The following generalized flag manifolds 
$$SU(6)/S(U(1)\times U(2)\times U(3)), $$
$$SU(7)/S(U(1)\times U(2)\times U(4)), $$
$$SU(8)/S(U(1)\times U(2)\times U(5)), $$
$$SU(8)/S(U(1)\times U(3)\times U(4)) $$
have precisely $4$ invariant almost complex structures up to conjugation and equivalence; $3$ of them are integrable and the fourth is non-integrable.
\end{theorem*}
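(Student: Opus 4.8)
The plan is to turn the classification into a finite combinatorial problem, decide integrability by the usual root-theoretic criterion, and then invoke Chern numbers (computed through Borel--Hirzebruch) to certify that the surviving classes are pairwise inequivalent. Write $U=SU(n)$ and $K=S(U(n_1)\times U(n_2)\times U(n_3))$, where $(n_1,n_2,n_3)$ is one of the four triples, all having pairwise distinct entries. First I would recall that the isotropy representation of $K$ on $\mathfrak m=T_{o}(U/K)$ splits into three pairwise non-isomorphic irreducible summands $\mathfrak m_{12},\mathfrak m_{23},\mathfrak m_{13}$, indexed by the positive $\mathfrak t$-roots $\xi_i-\xi_j$ (an $A_2$ system). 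An invariant almost complex structure is precisely a choice of $\pm i$ on each summand, that is an orientation $\varepsilon=(\varepsilon_{12},\varepsilon_{23},\varepsilon_{13})\in\{\pm1\}^3$, equivalently a tournament on the three blocks. This produces $2^3=8$ invariant structures; since conjugation $J\mapsto-J$ reverses all three orientations, it leaves exactly $4$ conjugation classes.

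\emph{Integrability.} Next I would apply the Newlander--Nirenberg theorem in its Lie-theoretic form: $J_\varepsilon$ is integrable if and only if the set of roots spanning its $(1,0)$-space is closed under root addition, which for three blocks amounts to the tournament being transitive (acyclic). Among tournaments on three vertices, the $6$ transitive ones fall into $3$ conjugation classes and the $2$ cyclic ones into a single class. Hence, up to conjugation there are at most $4$ classes, at most three integrable and at most one non-integrable; this already yields the upper bound and the claimed $3+1$ split.

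\emph{Separation by Chern numbers.} The substance is to prove these four classes are genuinely distinct, i.e. that no equivalence collapses two of them. Since an equivalence is a diffeomorphism intertwining the structures, it preserves the Chern numbers computed with the complex orientation of each $J_\varepsilon$ (which are moreover invariant under conjugation), so it is enough to separate the four classes by such numbers. By Borel--Hirzebruch, the real cohomology $H^{2}(U/K)$ is spanned by the classes $S_1,S_2,S_3$ (sums of the weights $x_a$ over the three blocks, subject to $S_1+S_2+S_3=0$), and the total Chern class of $(U/K,J_\varepsilon)$ is $\prod_\alpha(1+\varepsilon_\alpha\alpha)$, the product running over the positive complementary roots. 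Summing the roots of each summand gives
\[
c_1(J_\varepsilon)=\varepsilon_{12}(n_2S_1-n_1S_2)+\varepsilon_{23}(n_3S_2-n_2S_3)+\varepsilon_{13}(n_3S_1-n_1S_3),
\]
and one checks that the four representatives yield four distinct first Chern classes.

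\emph{Computation and the main difficulty.} Finally I would evaluate a discriminating collection of Chern numbers on the fundamental class: beginning with $c_1^{\,d}$, where $d=n_1n_2+n_1n_3+n_2n_3$ is the complex dimension, and adjoining lower products such as $c_1^{\,d-2}c_2$ should $c_1^{\,d}$ fail to separate all four. These integrals are computed from the intersection numbers $\int_{U/K}S_1^{p}S_2^{q}$ supplied by Borel--Hirzebruch (equivalently, by the Weyl-group integration formula). The hard part will be exactly this: carrying out the integration, keeping track of the orientation determined by each $J_\varepsilon$ (which differs from a fixed reference complex orientation by the sign $\prod(-1)^{n_in_j}$ over the reversed summands), and verifying that the four Chern-number data are pairwise distinct. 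Here the assumption that $n_1,n_2,n_3$ are pairwise distinct is decisive: it removes the block-permuting symmetries and forces the three linear forms $c_1$ above, and hence their top self-intersections, to differ, so the three integrable classes cannot be equivalent; the non-integrable class is already separated from them by integrability. Specializing to $(1,2,3)$, $(1,2,4)$, $(1,2,5)$ and $(1,3,4)$ then confirms distinctness in each case, yielding exactly four classes: three integrable and one non-integrable.
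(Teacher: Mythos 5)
Your strategy coincides with the paper's own proof of this theorem: count the invariant almost complex structures via Borel--Hirzebruch (Proposition \ref{numest}), decide integrability by closedness of the $(1,0)$ root system (transitive versus cyclic orderings, Lemma \ref{order}), and separate the surviving classes by Chern numbers. The count ($8$ structures, $4$ up to conjugation) and the $3+1$ integrability split are correct and match the paper. The gap is in the separation step, which is the actual substance of the theorem. You claim that since $n_1,n_2,n_3$ are pairwise distinct, the four linear forms $c_1(J_\varepsilon)$ are distinct, \emph{``and hence their top self-intersections\dots differ.''} That inference is a non sequitur: two distinct degree-two classes can perfectly well have equal top powers against the fundamental class, and equality of Chern numbers is exactly what must be ruled out. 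The paper itself shows how real this danger is: on $\mathbb{F}(5;1,2,2)$ the structures $(+,+,+)$ and $(+,+,-)$ carry formally different sign data and different expressions for $c_1$, yet every Chern number of the two coincides (Table \ref{tab3}), and indeed they are equivalent; likewise all three integrable structures on $\mathbb{F}(3n;n,n,n)$ are equivalent (Theorem \ref{flag6222}). Those examples have repeated block sizes, so they do not contradict your hypothesis, but they show that ruling out the block-permuting symmetry only removes one mechanism by which Chern numbers could coincide --- it is not an argument that they differ.

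The paper closes this gap by explicit computation: for $\mathbb{F}(7;1,2,4)$ it evaluates $c_1^{14}$ on all four structures (Table \ref{tab-dif}) and obtains four pairwise distinct integers, and similarly $c_1^{11}$, $c_1^{17}$, $c_1^{19}$ for the other three manifolds; a single Chern number turns out to suffice, with no need for auxiliary numbers such as $c_1^{d-2}c_2$. Your proposal explicitly defers this evaluation (``the hard part will be exactly this''), so as written it is a correct plan whose decisive verification is missing, and the heuristic you offer in its place would not survive scrutiny. To complete the argument along your lines you must either carry out the Weyl-group/Borel--Hirzebruch integration for the four sign patterns on each of the four manifolds, or prove a genuine algebraic statement that for pairwise distinct $n_1,n_2,n_3$ the four values of $c_1^{d}$ cannot coincide; neither is present in the proposal.
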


We also obtain the classification of invariant almost complex structures for the infinity family of flag manifolds $SU(3n)/S(U(n)\times U(n)\times U(n))$:
\begin{theorem*}
The family of generalized flag manifolds  $SU(3n)/S(U(n)\times U(n)\times U(n))$ has two invariant almost complex structures, up to conjugation and equivalence: one is an integrable structure and the other is non-integrable.
\end{theorem*}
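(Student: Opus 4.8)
The plan is to classify invariant almost complex structures on the full-isotropy flag manifold $SU(3n)/S(U(n)\times U(n)\times U(n))$ by encoding them combinatorially and then cutting down the list using Chern numbers together with the Borel--Hirzebruch machinery invoked earlier. Let me think about how this goes.

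The space $SU(3n)/S(U(n)\times U(n)\times U(n))$ is a flag manifold with three isotropy blocks. Its isotropy representation decomposes into irreducible pieces indexed by roots connecting the three blocks. For a flag with blocks of sizes $(n,n,n)$, the roots split into three families of "off-diagonal" blocks, corresponding to the pairs $(1,2), (1,3), (2,3)$. An invariant almost complex structure is determined by a choice of $\pm i$ on each of these irreducible isotropy summands. So the starting point is: an invariant almost complex structure corresponds to an assignment of a sign $\varepsilon_{ij}\in\{+,-\}$ (equivalently an orientation) to each of the three pairs of blocks. That gives $2^3=8$ invariant almost complex structures in total.

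Next I would reduce this list by the two equivalence relations in play. Conjugation simply reverses all signs simultaneously ($J\mapsto -J$), which pairs up the $8$ structures into $4$ conjugacy classes. Then equivalence under the normalizer of the torus in the Weyl group permutes the three blocks; because the blocks all have the same size $n$, the full symmetric group $S_3$ acts on the three-block configuration. The combinatorial data of a choice of orientation on the three pairs of a three-element set is exactly a tournament on three vertices, and $S_3$ acts on tournaments: there are two orbits, the two cyclic (transitive-on-none) tournaments forming one orbit and the six transitive tournaments forming the other. After also quotienting by conjugation, one expects these collapse to exactly two equivalence classes. So the combinatorial count should yield precisely two invariant almost complex structures up to conjugation and equivalence, matching the statement. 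The integrable one is the cyclic/tournament class (this is the standard result that an invariant almost complex structure on a flag manifold is integrable iff the associated "order" on the roots is transitive, i.e. comes from a choice of positive system / a parabolic), and the other class is non-integrable.

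The step I expect to be the genuine obstacle is \emph{proving} that the two combinatorial classes are actually inequivalent as almost complex structures, rather than merely that there are at most two. Showing non-integrability of the cyclic structure is classical (via the Nijenhuis tensor, or the Koszul/root-theoretic criterion that integrability is equivalent to transitivity of the tournament). The harder part is ruling out that some hidden diffeomorphism or further symmetry identifies the integrable and non-integrable classes; this is exactly where Chern numbers enter. I would compute the relevant Chern numbers of the two structures — using the Borel--Hirzebruch description of the cohomology and characteristic classes of $G/K$ in terms of the roots and the block data — and show they differ (e.g. a top Chern number or a mixed Chern number takes different values on the two classes). Since Chern numbers are invariants of the almost complex structure up to equivalence and are at most conjugated (some change sign, some are preserved) under $J\mapsto -J$, distinct Chern numbers force genuine inequivalence. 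Carrying out these Chern-number computations for the infinite family indexed by $n$ — showing the discriminating Chern number is nonzero for all $n$ — is the main technical work, and it is where I would spend most effort, likely by reducing to a generating-function or closed-form expression in $n$ via the Weyl-group/root combinatorics.
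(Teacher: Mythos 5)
Your combinatorial first half is essentially the paper's own argument, in different clothing. The paper likewise starts from the three isotropy summands and the Borel--Hirzebruch count (Proposition \ref{numest}) giving $2^{3}$ structures, i.e.\ four up to conjugation, and then collapses the three integrable ones into a single class by exhibiting explicit automorphisms of the Cartan sub-algebra that swap the equal-size blocks and fix the root system of $S(U(n)\times U(n)\times U(n))$, invoking Proposition \ref{si}; this is exactly your $S_3$-action on tournaments, made concrete. One slip to fix: you write that ``the integrable one is the cyclic/tournament class,'' but this contradicts your own parenthetical criterion (and Lemma \ref{order}) --- the \emph{transitive} tournaments are the integrable structures ($J_1,J_2,J_3$ in the paper's notation, each coming from an ordering of the coordinates), while the cyclic tournament is the non-integrable one ($J_4$, for which the paper shows no compatible ordering exists). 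Also, your appeal to ``the normalizer of the torus in the Weyl group'' should be made precise: block permutations are realized by elements of $SU(3n)$ normalizing $K$, whose induced diffeomorphisms of $G/K$ permute the isotropy summands and hence the sign patterns.

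The genuine problem is where you choose to spend your effort. You declare that the ``main technical work'' is to prove the integrable and non-integrable classes inequivalent by computing Chern numbers in closed form for all $n$ --- and you do not carry this out, so as written the proposal is incomplete precisely at its self-declared hardest step. But that step is unnecessary: an equivalence of almost complex structures is a diffeomorphism $\phi$ with $d\phi\circ J_1=J_2\circ d\phi$, and such a $\phi$ carries the Nijenhuis tensor of $J_1$ to that of $J_2$; hence an integrable structure can never be equivalent to a non-integrable one. This one-line observation is all the paper needs (and, implicitly, all it uses) to separate the two classes; no Chern number is computed for this family anywhere in the paper. Chern numbers are indeed the paper's tool elsewhere (e.g.\ for $\mathbb{F}(6;1,2,3)$, $\mathbb{F}(7;1,2,4)$, \dots), but there they are needed to distinguish several \emph{integrable} structures from one another --- a task where integrability itself gives no information and where, in your situation, the block-swap automorphisms already do the job. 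So: replace the proposed infinite-family Chern computation by the Nijenhuis-tensor remark, correct the cyclic/transitive mix-up, and your argument closes and coincides in substance with the paper's proof of Theorem \ref{flag6222}.
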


With respect to other Lie groups, we obtain a partial classification for several generalized flag manifolds of the classical Lie groups $SO(n), Sp(n)$ and for the exceptional Lie group $G_2$, see sections \ref{sec1}, \ref{sec2}, \ref{sec3} and \ref{sec4}  for more details.


\section{Generalized flag manifolds and $k$-symmetric spaces}
\subsection{Generalized flag manifolds}
Let $\got{g}$ be a complex semi-simple Lie algebra and  $\got{h}$ be a Cartan sub-algebra of  $\got{g}$. Denote by  $\Pi$ the set of roots of the pair  $(\got{g},\got{h})$ and consider the decomposition
$$
\got{g}=\got{h}\oplus\sum_{\alpha\in \Pi}\got{g}_{\alpha},
$$ 
where $\got{g}_{\alpha}=\{X\in\got{g}:\forall H\in \got{h},[H,X]=\alpha(H)X \}$ is the complex root space (with complex dimension one). 

The Cartan-Killing form on $\mathfrak{g}$ is given by  
$$
\langle X,Y\rangle=\tr(\ad(X)\ad(Y))
$$ 
and its restriction to $\got{h}$ is non-degenerated. Given $\alpha\in \got{h}^{*}$, we define $H_{\alpha}$ by $\alpha(\cdot)=\langle H_{\alpha},\cdot\rangle$ and $\got{h}_{\mathbb{R}}=\spam_{\mathbb{R}}\{H_{\alpha}:\alpha\in \Pi\}$. 

We fix a Weyl basis of $\got{g}$, that is, $X_{\alpha}\in\got{g}_{\alpha}$ such that  if $\beta\neq -\alpha$, $ \langle X_{\alpha},X_{\beta}\rangle=0$,  $\langle X_{\alpha},X_{-\alpha}\rangle=1$ and $ [X_{\alpha},X_{\beta}]=m_{\alpha,\beta}X_{\alpha+\beta}$,
with $m_{\alpha,\beta}\in\mathbb{R}$, $m_{-\alpha,-\beta}=-m_{\alpha,\beta}$ and $m_{\alpha,\beta}=0$ if $\alpha+\beta$ is not a root. 

Let $\Pi^{+}\subset \Pi$ be a set of positive roots, $\Sigma$ the corresponding set of simple roots and $\Theta$ a subset of $\Sigma$. We fix the following notation: $\langle\Theta\rangle$ is the set of the roots spanned by  $\Theta$, $\Pi_{M}=\Pi\setminus\langle{\Theta}\rangle$ is the set of complementary roots and $\Pi_{M}^{+}$ is the set of complementary positive roots.

Let 
$$
\got{p}_{\Theta}=\got{h}\oplus
\sum_{\alpha\in\langle\Theta\rangle^{+}}\got{g}_{\alpha}\oplus
\sum_{\alpha\in\langle\Theta\rangle^{+}}\got{g}_{-\alpha}\oplus
\sum_{\beta\in \Pi_{M}^{+}}\got{g}_{\beta}
$$
be a parabolic sub-algebra $\got{g}$ determined by $\Theta$.

\begin{definition}
A generalized flag manifold $\mathbb{F}_{\Theta}$ associated to $\got{g}$ and $\Theta$ is the homogeneous space
$$
\mathbb{F}_{\Theta}=G/P_{\Theta},
$$
where $G$ is a complex Lie group with Lie algebra  $\got{g}$ and $P_{\Theta}$ is the normalizer of  $\got{p}_{\Theta}$ on $G$.
\end{definition}

Let $\got{u}$ be a compact real form of $\got{g}$ and $U=\exp(\mathfrak{u)}$. We have
$$
\got{u}=\spam_{\mathbb{R}}\{i\got{h}_{\mathbb{R}},A_{\alpha},iS_{\alpha}; \alpha\in \Pi\},
$$ 
where $A_{\alpha}=X_{\alpha}-X_{-\alpha}$ and $S_{\alpha}=X_{\alpha}+X_{-\alpha}$. 


Let $\got{k}_{\Theta}$ be the Lie algebra of $K_{\Theta}:=P_{\Theta}\cap U$. By construction $K_{\Theta}\subset U$ is the centralizer of a torus of $G$. We denote by  $\got{k}_{\Theta}^{\mathbb{C}}$ the complexified Lie algebra  $\got{k}_{\Theta}=\got{u}\cap \got{p}_{\Theta}$, that is, 
$$
\got{k}_{\Theta}^{\mathbb{C}}=\got{h}\oplus
\sum_{\alpha\in\langle\Theta\rangle^{+}}\got{g}_{\alpha}\oplus
\sum_{\alpha\in\langle\Theta\rangle^{+}}\got{g}_{-\alpha}.
$$

Since $U$ is compact and acts transitively on  $\mathbb{F}_{\Theta}$, we have 
$$
\mathbb{F}_{\Theta}=G/P_{\Theta}=U/(P_{\Theta}\cap U)=U/K_{\Theta}.
$$ 

If $\Theta=\emptyset$, we have
$$
\got{p}_{\Theta}=\got{p}=\got{h}\oplus
\sum_{\beta\in \Pi^{+}}\got{g}_{\beta}
$$ 
is a minimal parabolic sub-algebra (that is, a Borel sub-algebra) of $\got{g}$ and  
$$
\mathbb{F}=G/B=U/T
$$
is called {\it full flag manifold}, where $B$ is a Borel subgroup and $T=B\cap U$ is a maximal torus of  $U$. 

\subsection{The isotropy representation}

We denote by $x_{0}=eK_{\Theta}$ the origin of the flag manifold. 

Since $\mathbb{F}_{\Theta}=U/K_{\Theta}$ is a reductive homogeneous space, the Lie algebra of $U$ decomposes into 
$$
\got{u}=\got{k}_{\Theta}\oplus \got{m}_{\Theta} 
$$
with
$$
Ad(k)\got{m}_{\Theta}\subseteq \got{m}_{\Theta}, \forall k\in K_{\Theta}.
$$

The canonical projection $\pi:U\rightarrow\mathbb{F}_{\Theta}=U/K_{\Theta}$ induces an isomorphim between  $\got{m}_{\Theta}$ and $T_{x_{0}}\mathbb{F}_{\Theta}$. In some cases we write just $\mathfrak{m}$ instead $\mathfrak{m}_\Theta$.


The isotropy representation identifies with $Ad(k)\mid_{\got{m}_{\Theta}}:\got{m}_{\Theta}\longrightarrow\got{m}_{\Theta}$ and it is completely reducible, that is, 
$$
\got{m}_{\Theta}=\got{m}_{1}\oplus\got{m}_{2}\oplus\cdots\oplus\got{m}_{n},
$$
where each $\got{m}_i$ is an indecomposable and non-equivalent sub-representation (or equivalently, irreducible and non-equivalent $\mathfrak{k}_\Theta$-sub-modules).

The description of the irreducible sub-modules is given in the following way:

Consider the following sub-algebras:  $\got{h}=\got{h}^{\mathbb{C}}\cap \iota\got{k}$ and $\got{t}=Z(\got{k}^{\mathbb{C}})\cap \iota\got{h}$. Then $\got{k}^{\mathbb{C}}=\got{t}^{\mathbb{C}}\oplus \got{k'}^{\mathbb{C}}$, where $\got{k'}^{\mathbb{C}}$ is the semi-simple part of $\got{k}^{\mathbb{C}}$.

We consider the restriction map 
$$
\begin{array}{clcl}
\kappa:&\got{h}^{*}&\longrightarrow &\got{t}^{*}\\
&\alpha&\longmapsto&\alpha|_{\got{t}}.
\end{array}
$$

\begin{definition}\label{t-root}
The elements of $R_{T}:=\kappa(\Pi_{M})$ are called $T$-roots.
\end{definition}

\begin{theorem}[\cite{grego}]\label{142}
There exists a 1-1 correspondence between $T$-roots $\xi$ and irreducible $ad(\got{k}^{\mathbb{C}})$-modules $\got{m}_{\xi}$, given by
$$
R_{T}\ni\xi\longleftrightarrow m_{\xi}=\sum_{\kappa(\alpha)=\xi}\got{g}_{\alpha}.
$$
These sub-modules are non-equivalent as $\got{k}^{\mathbb{C}}$-modules.
\end{theorem}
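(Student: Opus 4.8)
The plan is to establish the bijection between $T$-roots and irreducible $\mathrm{ad}(\got{k}^{\mathbb{C}})$-submodules of $\got{m}_\Theta^{\mathbb{C}}$ in three stages: first produce the submodules $\got{m}_\xi$, then show each is irreducible, and finally show distinct $T$-roots give inequivalent modules. To begin, I would observe that $\got{m}_\Theta^{\mathbb{C}} = \sum_{\alpha \in \Pi_M}\got{g}_\alpha$ and that the action of $\got{t}^{\mathbb{C}} = Z(\got{k}^{\mathbb{C}})$ on each root space $\got{g}_\alpha$ is by the scalar $\alpha|_{\got{t}} = \kappa(\alpha)$, while the semisimple part $\got{k'}^{\mathbb{C}}$ acts via the root strings within $\langle\Theta\rangle$. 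The key structural point is that if $\alpha, \beta \in \Pi_M$ with $\kappa(\alpha) = \kappa(\beta)$, then $\alpha - \beta$ restricts to $0$ on $\got{t}$, which forces $\alpha - \beta \in \langle\Theta\rangle$ (since $\got{t}$ is precisely the part of the Cartan orthogonal to $\langle\Theta\rangle$); this is exactly the condition for $\got{g}_\alpha$ and $\got{g}_\beta$ to lie in a common $\got{k'}^{\mathbb{C}}$-orbit.

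Next I would verify that each $\got{m}_\xi = \sum_{\kappa(\alpha)=\xi}\got{g}_\alpha$ is indeed a $\got{k}^{\mathbb{C}}$-submodule. For the central part this is immediate since $\got{t}^{\mathbb{C}}$ acts by the common scalar $\xi$. For $\got{k'}^{\mathbb{C}}$, generated by the $\got{g}_{\pm\gamma}$ with $\gamma \in \langle\Theta\rangle$, I would use the bracket relations: $[\got{g}_\gamma, \got{g}_\alpha] \subseteq \got{g}_{\gamma+\alpha}$, and since $\kappa(\gamma+\alpha) = \kappa(\gamma)+\kappa(\alpha) = 0 + \xi = \xi$ (because $\gamma \in \langle\Theta\rangle$ restricts to $0$ on $\got{t}$), the bracket lands back in $\got{m}_\xi$. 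Hence $\got{m}_\xi$ is stable under all of $\got{k}^{\mathbb{C}}$. Irreducibility would then follow by showing that $\got{k'}^{\mathbb{C}}$ acts transitively (up to the appropriate root-string connectivity) on the set $\{\alpha : \kappa(\alpha) = \xi\}$: given two such roots $\alpha,\beta$, their difference lies in $\langle\Theta\rangle$, and a standard argument with $\mathfrak{sl}_2$-triples attached to roots in $\Theta$ lets one pass from $\got{g}_\alpha$ to $\got{g}_\beta$ by successive brackets, so any nonzero submodule must be everything.

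For inequivalence, I would compare the central characters: $\got{t}^{\mathbb{C}}$ acts on $\got{m}_\xi$ by the scalar $\xi$, so if $\xi \neq \xi'$ then $\got{m}_\xi$ and $\got{m}_{\xi'}$ have different restrictions to the center $\got{t}^{\mathbb{C}}$ and therefore cannot be isomorphic as $\got{k}^{\mathbb{C}}$-modules. This simultaneously gives the injectivity of $\xi \mapsto \got{m}_\xi$ and the claimed non-equivalence. Finally, surjectivity onto a full decomposition of $\got{m}_\Theta^{\mathbb{C}}$ is clear from the partition of $\Pi_M$ into fibers of $\kappa$. The main obstacle I anticipate is the irreducibility step: one must argue carefully that the $\got{k'}^{\mathbb{C}}$-action connects all root spaces in a given $\kappa$-fiber, which requires knowing that $\alpha - \beta \in \langle\Theta\rangle$ can be realized as a sum of roots in $\langle\Theta\rangle$ each of which keeps the intermediate sums inside $\Pi_M$ (i.e. the relevant brackets are nonzero). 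This connectivity is where the structure theory of root systems, rather than formal character-theoretic bookkeeping, is genuinely needed, and I would either invoke the highest-weight theory for the reductive algebra $\got{k}^{\mathbb{C}}$ or cite the original source for this combinatorial fact.
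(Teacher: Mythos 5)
First, a point of context: the paper itself contains no proof of Theorem \ref{142} --- it is quoted as background directly from \cite{grego} --- so your attempt can only be measured against the standard argument in the literature, not against an internal one. Judged that way, two of your three steps are complete and correct. The submodule verification is right: $\got{t}^{\mathbb{C}}$ acts on each $\got{g}_{\alpha}\subseteq\got{m}_{\xi}$ by the scalar $\xi$, and for a root $\gamma\in\langle\Theta\rangle$ one has $[\got{g}_{\gamma},\got{g}_{\alpha}]\subseteq\got{g}_{\gamma+\alpha}$ with $\kappa(\gamma+\alpha)=\xi$, and $\gamma+\alpha\in\Pi_{M}$ whenever it is a root (otherwise $\alpha=(\gamma+\alpha)-\gamma$ would lie in $\langle\Theta\rangle$). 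The inequivalence step via central characters of $\got{t}^{\mathbb{C}}$ is also correct and is exactly how the sources argue. The genuine gap is irreducibility, which you flag but do not close, and it is the heart of the theorem: without it you have only produced a decomposition into submodules, not into irreducibles, so the ``1-1 correspondence'' is not established. Two further cautions: your assertion that $\kappa(\alpha)=\kappa(\beta)$ forces $\alpha-\beta\in\langle\Theta\rangle$ is an abuse --- it forces $\alpha-\beta$ into the \emph{linear span} of $\Theta$, but $\alpha-\beta$ is in general not a root, so no single bracket connects $\got{g}_{\alpha}$ to $\got{g}_{\beta}$; and the two escape routes you offer do not work as stated, since ``highest-weight theory'' by itself still requires showing that $\kappa^{-1}(\xi)$ contains a unique $\langle\Theta\rangle^{+}$-maximal root, while ``citing the original source'' is circular, because that source is precisely the reference the theorem is being quoted from.

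The missing step can in fact be closed by an elementary induction, which is what a self-contained proof needs. Since $\got{h}\subseteq\got{k}^{\mathbb{C}}$ and the $\got{h}$-weight spaces of $\got{m}_{\xi}$ are exactly the one-dimensional spaces $\got{g}_{\alpha}$ with $\alpha\in\kappa^{-1}(\xi)$, every nonzero submodule contains some $\got{g}_{\alpha}$; moreover $[\got{g}_{\pm\gamma},\got{g}_{\alpha}]=\got{g}_{\alpha\pm\gamma}$ is nonzero whenever $\alpha\pm\gamma$ is a root. So irreducibility reduces to the claim that any $\alpha,\beta\in\kappa^{-1}(\xi)$ are joined by a chain inside $\kappa^{-1}(\xi)$ whose consecutive terms differ by elements of $\Theta$. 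Write $\alpha-\beta=\sum_{\gamma\in\Theta}c_{\gamma}\gamma$ with $c_{\gamma}\in\mathbb{Z}$. If $\alpha\neq\beta$ then $(\alpha-\beta,\alpha-\beta)>0$, so after possibly interchanging the roles of $\alpha$ and $\beta$ we may assume $(\alpha,\alpha-\beta)>0$, hence $c_{\gamma}(\alpha,\gamma)>0$ for some $\gamma\in\Theta$. If $c_{\gamma}>0$ and $(\alpha,\gamma)>0$ then $\alpha-\gamma$ is a root; if $c_{\gamma}<0$ and $(\alpha,\gamma)<0$ then $\alpha+\gamma$ is a root; in either case the new root again lies in $\Pi_{M}$ with the same $\kappa$-image, and the quantity $\sum_{\gamma}|c_{\gamma}|$ drops by one. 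Induction on this quantity produces the chain, hence irreducibility. With this paragraph supplied, your outline becomes a complete proof; without it, what you have proved is only the easy part of the statement.
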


Therefore a decomposition of $\got{m}^{\mathbb{C}}$ into irreducible $\ad(\got{k}^{\mathbb{C}})$-modules is given by
$$
\got{m}^{\mathbb{C}}=\sum_{\xi\in R_{T}}\got{m}_{\xi}.
$$

We observe that the complex conjugation $\tau$ of $\got{g}^{\mathbb{C}}$, interchanges the root spaces $\got{g}_{\alpha}$ and $\got{g}_{-\alpha}$, and consequently interchanges $\got{m}_{\xi}$ and $\got{m}_{-\xi}$.
We have the following decomposition
$$
\got{m}=\sum_{\xi\in R_{T}^{+}}(\got{m}_{\xi}+\got{m}_{-\xi})^{\tau},
$$
where $R_{T}^{+}=\kappa(R^{+})$ denote the set of positive $T$-roots, and $\got{n}^{\tau}$ denotes the set of the fixed points of  $\tau$ in a vector sub-space $\got{n}\subset \got{g}^{\mathbb{C}}$. 
\subsection{$k$-symmetric spaces}
Let $(M,g)$ be a Riemannian manifold and $x\in M$. An isometry of $(M,g)$ with isolated fixed point $x$ is called a symmetry of $(M,g)$ at $x$.

\begin{definition}
Assume that $(M,g)$ admits a set $\{s_{x}:x\in M\}$ of symmetries. We call $\{s_{x}:x\in M\}$ of a {\bf Riemannian $k$-symmetric structure} on $(M,g)$ if, for $x,y\in M$ we have:
$$
\begin{array}{c}
s_{x}\circ s_{y}=s_{z}\circ s_{x},\ \ \ (z=s_{x}(y))\\
(s_{x})^{k}=id,\ \ \ (s_{x})^{k}\neq id \ \ (l<k).
\end{array}
$$ 
Then $(M,g)$ with a Riemannian $k$-symmetric structure is called a $k$-symmetric space. 
\end{definition}

Let $M=G/K$ be a homogeneous space with origin $o=eK$ (trivial coset) and $g$ be a $G$-invariant Riemannian metric. We call the pair $(M,g)$ a Riemannian homogeneous space.

Given an automorphism $\theta$ of $G$ we define $G^{\Theta}$ to be the set of fixed points of $\theta$ and $G_{0}^{\Theta}$ the connected component of the identity.

\begin{proposition}\label{prop11}
Suppose that exist an automorphism $\Theta$ of $G$ such that  

\begin{itemize}

\item $G_{0}^{\Theta}\subseteq H\subseteq G^{\Theta}$;

\item $\Theta^{k}=1$ and $\Theta_{l}\neq0$ for any $l<k$;

\item Let $s$ be the transformation of $M$ defined by $\pi\circ \Theta=s\circ \pi$. Then $s$ preserves the metric at $o$.

\end{itemize}
Then $\{s_{x}=g\circ s \circ g^{-1}:x=g.o\in M\}$ define a Riemannian $k$-symmetric structure on $(M,g)$. 
\end{proposition}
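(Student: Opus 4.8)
The plan is to base everything on the single intertwining relation that the defining equation $\pi\circ\Theta=s\circ\pi$ produces. First I would check that $s$ is well defined and record this relation. Since $H\subseteq G^{\Theta}$ we have $\Theta|_{H}=\mathrm{id}$, hence $\Theta(gH)=\Theta(g)H$ and $s(gH)=\Theta(g)H$ does not depend on the representative; the same computation gives, for every $g'\in G$,
$$ s\circ L_{g'}=L_{\Theta(g')}\circ s, $$
where $L_{g'}$ denotes left translation by $g'$. Taking $g'=h\in H$ shows that $s$ commutes with $L_{h}$, which is precisely what makes $s_{x}=L_{a}\circ s\circ L_{a}^{-1}$ depend only on $x=aH$ and not on the chosen representative $a$. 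Using the relation once more I would rewrite this in the compact form $s_{x}=L_{a\Theta(a)^{-1}}\circ s$, which streamlines the remaining verifications.

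Next I would promote the hypothesis that $s$ preserves the metric at $o$ to the statement that $s$ is a global isometry. Rewriting the intertwining relation as $s=L_{\Theta(a)}\circ s\circ L_{a}^{-1}$ and using that every $L_{a}$ is an isometry (by $G$-invariance of the metric), the differential $ds_{x}$ at an arbitrary $x=aH$ factors as a composition of linear isometries through $ds_{o}$, which is isometric by hypothesis. Hence $s\in\Isom(M,g)$, and therefore each $s_{x}=L_{a}\circ s\circ L_{a}^{-1}$ is an isometry with $s_{x}(x)=x$.

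I would then verify that each $s_{x}$ is a symmetry at $x$ of period $k$. Because $(s_{x})^{l}=L_{a}\circ s^{l}\circ L_{a}^{-1}$, it suffices to study $s=s_{o}$. On the one hand $s^{l}(gH)=\Theta^{l}(g)H$, so $\Theta^{k}=1$ yields $s^{k}=\mathrm{id}$. On the other hand, under the reductive decomposition $\got{g}=\got{h}\oplus\got{m}$ and the identification $T_{o}M\cong\got{m}$, the differential $ds_{o}$ is $\sigma|_{\got{m}}$, where $\sigma=d\Theta_{e}$. The condition $G_{0}^{\Theta}\subseteq H\subseteq G^{\Theta}$ forces $\got{h}$ to be exactly the $(+1)$-eigenspace of $\sigma$, so $\got{m}$ carries all eigenvalues $\neq 1$; this has two consequences. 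First, $\sigma|_{\got{m}}$ has the same order $k$ as $\sigma$, whence $(ds_{o})^{l}\neq\mathrm{id}$ and so $s^{l}\neq\mathrm{id}$ for $0<l<k$. Second, $\sigma|_{\got{m}}-\mathrm{id}$ is invertible, so $ds_{o}$ has no nonzero fixed vector; since isometries commute with $\exp_{o}$, the fixed-point set of $s$ near $o$ corresponds to $\ker(ds_{o}-\mathrm{id})=0$, making $o$ an isolated fixed point of $s$ and hence $x$ an isolated fixed point of $s_{x}$.

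Finally I would establish the compatibility $s_{x}\circ s_{y}=s_{z}\circ s_{x}$ with $z=s_{x}(y)$. Writing $x=aH$ and $y=bH$, a short computation gives $z=a\Theta(a)^{-1}\Theta(b)\,H$. Then, using $s_{x}=L_{a\Theta(a)^{-1}}\circ s$ together with $s\circ L_{g'}=L_{\Theta(g')}\circ s$, I would push every occurrence of $s$ to the right so that both $s_{x}\circ s_{y}$ and $s_{z}\circ s_{x}$ collapse to an expression of the form $L_{(\cdots)}\circ s^{2}$, and then compare the two leading group elements. I expect this algebraic identity to be the main obstacle, not for any deep reason but because $G$ is noncommutative: the simplification involves words in $a,b,\Theta(a),\Theta(b),\Theta^{2}(b)$ that must be reduced carefully, and one has to choose the representative of $z$ compatibly with the well-definedness of $s_{z}$ secured in the first step.
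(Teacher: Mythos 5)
The paper never proves this proposition: it is imported as classical background on generalized symmetric spaces (cf.\ \cite{Tojo}, \cite{B-R}) and the text moves directly on to Proposition \ref{flagksim}, so there is no in-paper argument to compare yours against; it must be judged on its own, and it is correct. Steps 1--3 are sound: well-definedness of $s$ and of $s_x$ from $H\subseteq G^{\Theta}$, the intertwining relation $s\circ L_{g}=L_{\Theta(g)}\circ s$, globalizing the isometry property from $o$ by the factorization $s=L_{\Theta(a)}\circ s\circ L_{a^{-1}}$, and reading off both the exact order $k$ and the isolated fixed point from $ds_{o}=\sigma|_{\got{m}}$, $\ker(\sigma|_{\got{m}}-\mathrm{id})=0$. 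The one step you deferred also goes through exactly as you planned, and the noncommutativity you worried about cancels automatically: with $s_{x}=L_{a\Theta(a)^{-1}}\circ s$ one gets
$$
s_{x}\circ s_{y}=L_{a\Theta(a)^{-1}}\circ s\circ L_{b\Theta(b)^{-1}}\circ s=L_{a\Theta(a)^{-1}\Theta(b)\Theta^{2}(b)^{-1}}\circ s^{2},
$$
while for $z=cH$ with the representative $c=a\Theta(a)^{-1}\Theta(b)$ (legitimate, since you already proved $s_{z}$ is representative-independent),
$$
s_{z}\circ s_{x}=L_{c\Theta(c)^{-1}\Theta(a)\Theta^{2}(a)^{-1}}\circ s^{2}
=L_{a\Theta(a)^{-1}\Theta(b)\Theta^{2}(b)^{-1}\Theta^{2}(a)\Theta(a)^{-1}\,\Theta(a)\Theta^{2}(a)^{-1}}\circ s^{2}
=L_{a\Theta(a)^{-1}\Theta(b)\Theta^{2}(b)^{-1}}\circ s^{2},
$$
so the two sides agree. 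Two points you should make explicit to close the argument completely: (i) passing from $\Theta^{l}\neq\mathrm{id}$ to $\sigma^{l}\neq\mathrm{id}$ (where $\sigma=d\Theta_{e}$) uses that $G$ is connected, since an automorphism of a connected Lie group is determined by its differential at the identity --- this is implicit in the paper's setting of compact connected groups; (ii) the identification $ds_{o}=\sigma|_{\got{m}}$ requires the reductive complement $\got{m}$ to be $\sigma$-invariant, which is available because $\sigma$ has finite order, hence is semisimple: take $\got{m}=\im(\sigma-\mathrm{id})$, which complements $\got{h}=\ker(\sigma-\mathrm{id})=\got{g}^{\sigma}$, the equality $\got{h}=\got{g}^{\sigma}$ being exactly what the hypothesis $G_{0}^{\Theta}\subseteq H\subseteq G^{\Theta}$ gives at the Lie algebra level.
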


\begin{remark}
We remark that if $k=2$, $G/K$ is called {\it symmetric space}. In general $k$-symmetric spaces are also called {\it generalized symmetric spaces}.
\end{remark}

The next Proposition, proved by Tojo in \cite{Tojo} (see also Burstall-Rawnsley \cite{B-R}), shows that every generalized flag manifold is a $k$-symmetric space. This result will be very useful in this work.

\begin{proposition}[\cite{Tojo}]\label{flagksim}
Let $M=G/K$ be a generalized flag manifold. Then $G/K$ admits an invariant complex structure $J$ such that $(G/K,J,g)$ has an Hermitian $m$-symmetric structure for each $G$-invariant Riemannian metric $g$.
\end{proposition}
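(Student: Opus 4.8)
The plan is to apply Proposition \ref{prop11} with an \emph{inner} automorphism $\sigma$ of $U$ given by conjugation by a suitable central element $c$ of $K_\Theta$, built from the $T$-root grading of $\got m$. The pay-off of choosing $c\in K_\Theta$ is that the induced map on $U/K_\Theta$ is simply a left translation: since $c^{-1}\in K_\Theta$ one has $\pi(\sigma(g))=cgc^{-1}K_\Theta=cgK_\Theta=L_c(\pi(g))$, so the symmetry $s$ at the origin is $L_c$. Consequently $s$ is automatically an isometry for every $U$-invariant metric and holomorphic for every invariant complex structure, so hypothesis (iii) of Proposition \ref{prop11} and the Hermitian compatibility come for free; the whole problem reduces to the algebraic hypotheses (i) and (ii), i.e.\ to producing $c$ with the correct fixed-point set and the correct order.

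To construct $c$, expand each root as $\beta=\sum_{\alpha\in\Sigma}c_\alpha(\beta)\,\alpha$ and define the $\Theta$-height $h(\beta)=\sum_{\alpha\in\Sigma\setminus\Theta}c_\alpha(\beta)$. This functional vanishes exactly on $\langle\Theta\rangle$, hence factors through the restriction map $\kappa$ and is constant on each isotropy summand $\got m_\xi$ of Theorem \ref{142}. Realizing $h$ through the Killing form by an element $H_0\in\got t=Z(\got k^{\mathbb C})$ with $\beta(H_0)=h(\beta)$, I set $k_0=\max\{h(\beta):\beta\in\Pi_M^+\}$, $m=k_0+1$, and $c=\exp\!\big(\tfrac{2\pi i}{m}H_0\big)$, with $\sigma=Ad(c)$. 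Because $H_0\in\got t$, the element $c$ is central in $K_\Theta$, as required above, and $\sigma$ acts on $\got g_\beta$ by the scalar $e^{2\pi i\,h(\beta)/m}$.

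It then remains to read off (i) and (ii) from the height. The fixed-point algebra of $\sigma$ is $\got h\oplus\sum_{h(\beta)\equiv0\,(m)}\got g_\beta$; since $|h(\beta)|\le k_0=m-1$ on $\Pi_M$, the congruence forces $h(\beta)=0$, i.e.\ $\beta\in\langle\Theta\rangle$, which identifies the fixed algebra with $\got k_\Theta$ and gives $(G^\sigma)_0\subseteq K_\Theta\subseteq G^\sigma$. For (ii), the eigenvalues of $\sigma$ on $\got m$ are $e^{2\pi i\ell/m}$ with $\ell=h(\beta)$ ranging over $\{-k_0,\dots,-1,1,\dots,k_0\}$; the exponent $\ell=1$ occurs (the simple roots of $\Sigma\setminus\Theta$ have height one), so $\sigma$ has order exactly $m$, while no such $\ell$ is divisible by $m$, whence $\sigma$ fixes no nonzero vector of $\got m$ and $x_0$ is an isolated fixed point of $s$. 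Proposition \ref{prop11} then delivers the Hermitian $m$-symmetric structure $\{s_x=gsg^{-1}\}$, valid for every invariant $g$ since $s=L_c$ is a holomorphic isometry of all of them.

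I expect the only real obstacle to be this last bookkeeping around $m$: one must pin down $m$ so that $\sigma$ is a genuine finite-order automorphism realizing its \emph{minimal} period in (ii) while fixing \emph{exactly} $\got k_\Theta$ in (i), and both conditions rest entirely on the precise range $\{-k_0,\dots,k_0\}$ of the $\Theta$-height on $\Pi_M$ rather than on any metric input. The geometric hypotheses, by contrast, are trivialized by the observation that the symmetry is a left translation.
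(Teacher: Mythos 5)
Your construction is correct, and it is essentially the argument behind the result as it stands in the literature: the paper itself does not prove Proposition \ref{flagksim} but imports it from Tojo \cite{Tojo} (see also Burstall--Rawnsley \cite{B-R}), and the proof there rests on exactly the canonical-element/height-grading automorphism you build, namely $\sigma=Ad\bigl(\exp(\tfrac{2\pi i}{m}H_{0})\bigr)$ with $m=k_{0}+1$, whose fixed algebra is $\got{k}_{\Theta}$ because the $\Theta$-height of complementary roots lies in $\{-k_{0},\dots,-1,1,\dots,k_{0}\}$, and whose induced symmetry at the origin is the left translation $L_{c}$. The only steps you leave implicit are standard facts about flag manifolds that complete the ``Hermitian'' claim: an integrable invariant $J$ exists, and every invariant metric is automatically compatible with every invariant almost complex structure (since the isotropy summands $\got{m}_{\xi}$ are irreducible, mutually inequivalent and $J$-invariant, any invariant metric is a sum of multiples of the restricted Killing form, on which $J$ acts orthogonally), so once the symmetries are left translations they are holomorphic isometries for all such pairs $(J,g)$.
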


The next Lemma gives an important information about the cohomology of $k$-symmetric spaces. 
\begin{lemma}[\cite{Kote}]\label{rel2}
Let $G/K$ and $G/L$ $k$-symmetric spaces with $G$ a compact simple Lie group, $K\subset L$ and $\rk(K)=\rk(L)$. Then the homogeneous fibration $G/K\longrightarrow G/L$ with fiber $L/K$ has the following property: the restriction map from the total space to the fiber is surjective in cohomology ring. 
\end{lemma}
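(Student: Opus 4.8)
The plan is to analyze the Serre spectral sequence of the homogeneous fibration $L/K \hookrightarrow G/K \xrightarrow{\pi} G/L$ and to show that it collapses at the second page, working with real (equivalently rational) coefficients throughout, which is harmless here since the spaces involved are formal and we avoid torsion over $\mathbb{Q}$. First I would reduce to the case where $G$ is simply connected by passing to the universal cover, so that the base $G/L$ is simply connected and the local coefficient system of the fibration is trivial. Then the second page takes the form $E_2^{p,q}=H^p(G/L)\otimes H^q(L/K)$, and the restriction map to the fiber is precisely the edge homomorphism $H^\ast(G/K)\to E_\infty^{0,\ast}\hookrightarrow E_2^{0,\ast}=H^\ast(L/K)$ induced by the inclusion $i\colon L/K\hookrightarrow G/K$.

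The key structural input is that both the fiber and the base have cohomology concentrated in even degrees. For the fiber this follows from the hypothesis $\rk(K)=\rk(L)$: a maximal torus $S$ of $K$ has dimension $\rk(K)=\rk(L)$ and hence is simultaneously maximal in $L$, so $L/K$ is an equal-rank homogeneous space, its Euler characteristic $\chi(L/K)=|W_L|/|W_K|$ is positive, and by the Hopf--Samelson criterion $H^{\mathrm{odd}}(L/K;\mathbb{R})=0$. For the base I would use that $G/L$ is a $k$-symmetric space of the compact simple group $G$, whose defining automorphism fixes a maximal torus, so that $L$ has maximal rank in $G$ and $G/L$ likewise has cohomology only in even degrees. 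In the flag-manifold situation to which this is applied, $K$ and $L$ are centralizers of tori, so $\rk(K)=\rk(L)=\rk(G)$ and both facts are immediate.

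With both $H^\ast(G/L)$ and $H^\ast(L/K)$ supported in even degrees, $E_2^{p,q}$ is nonzero only when $p$ and $q$ are both even, hence only in even total degree. Every differential $d_r$ raises the total degree by one and therefore sends a group in even total degree to one in odd total degree, which must vanish; thus $d_r=0$ for all $r\ge 2$ and $E_2=E_\infty$. In particular $E_\infty^{0,q}=E_2^{0,q}=H^q(L/K)$ for every $q$, so the edge homomorphism, that is the restriction $i^\ast\colon H^\ast(G/K)\to H^\ast(L/K)$ along the fiber inclusion, is surjective; being induced by a continuous map it is a ring homomorphism, which gives exactly the asserted surjectivity of cohomology rings.

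The part I expect to require the most care is the even-degree claim for the base, namely confirming that the $k$-symmetric structure forces $L$ to have maximal rank in $G$ (equivalently that $G/L$ has positive Euler characteristic); once that collapse input is secured, the rest is formal. A secondary technical point is the triviality of the coefficient system, which I dispose of by the reduction to simply connected $G$, and which in any case is automatic in the flag case since $G/L$ is then simply connected from the outset.
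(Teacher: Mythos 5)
A point of reference first: the paper does not prove Lemma \ref{rel2} at all; it imports it from Kotschick--Terzi\'c \cite{Kote}. So your attempt can only be measured against the argument expected there, which is indeed the one you outline: Serre spectral sequence with trivial coefficients, evenness of the cohomology of fiber and base forcing collapse at $E_2$, and surjectivity of the edge map onto $E_2^{0,*}=H^{*}(L/K)$. Your treatment of the fiber is correct ($\rk K=\rk L$ gives $\chi(L/K)=\vert W_L\vert/\vert W_K\vert>0$ and $H^{\mathrm{odd}}(L/K;\mathbb{R})=0$ by Hopf--Samelson), and the parity argument killing all differentials is fine once both evenness statements are in hand.

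The genuine gap is exactly where you flagged it: the claim that the defining automorphism of a $k$-symmetric space $G/L$ of a compact simple group fixes a maximal torus, so that $\rk L=\rk G$, is false. This holds for inner automorphisms, but finite-order \emph{outer} automorphisms can have fixed-point subgroups of strictly smaller rank: $SU(n)/SO(n)$, $SU(2n)/Sp(n)$ and $E_6/F_4$ are symmetric (hence $k$-symmetric) spaces of compact simple groups with $\rk L<\rk G$, and such bases do carry odd-degree real cohomology (e.g.\ $H^{*}(E_6/F_4;\mathbb{R})$ is an exterior algebra on generators of degrees $9$ and $17$). For such a base your parity argument no longer excludes the differentials $d_r$ with $r$ odd emanating from the fiber column, so under the lemma's literal hypotheses (only $\rk K=\rk L$) the proof is incomplete; closing it in that generality requires a substantially harder input, such as the Shiga--Tezuka theorem establishing Halperin's conjecture for equal-rank pairs, which makes \emph{any} fibration with fiber $L/K$ totally non-cohomologous to zero. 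What saves the argument in practice is that in every application in the paper one has $\rk K=\rk L=\rk G$ (maximal tori, $Spin(9)\subset F_4$, $SU(6)\times U(1)\subset E_6$, etc.); then $L$ contains a maximal torus of $G$, the base $G/L$ is simply connected with purely even cohomology, and your proof closes --- this maximal-rank situation is the setting the cited lemma of \cite{Kote} is really about. A second, smaller repair: the reduction to simply connected $G$ is vacuous, since replacing $G$ by its universal cover replaces $L$ by its (possibly disconnected) preimage and changes neither the space $G/L$ nor its fundamental group. What actually trivializes the coefficient system is either $\rk L=\rk G$ (which forces $\pi_1(G/L)=0$, because $\pi_1(T)\to\pi_1(G)$ is onto), or the general fact that the structure group $L$ of the homogeneous bundle is connected, so the monodromy action on $H^{*}(L/K)$ factors through $\pi_0(L)$ and is trivial.
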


\section{Invariant almost complex structures}

\begin{definition}
An almost complex structure on the flag manifold  $\mathbb{F}_{\Theta}$ is a tensor  $J^{\Theta}$  such that for each point  $x\in \mathbb{F}_{\Theta}$ we have an endomorphism  $J^{\Theta}:T_{x}\mathbb{F}_{\Theta}\longrightarrow T_{x}\mathbb{F}_{\Theta}$ satisfying $(J^{\Theta})^{2}=-Id$.
\end{definition}

\begin{definition}
An $U$-invariant almost complex structure  $J^{\Theta}$ on  $\mathbb{F}_{\Theta}=U/K_{\Theta}$ is given by  
$$
J_{ux}^{\Theta}=dE_{u}J_{ux}^{\Theta}dE_{u^{-1}},\ \mbox{ for each } \ u\in U,
$$ 
where  $dE_{u}:T(U/K_{\Theta})\rightarrow T(U/K_{\Theta})$ is the differential of the left action at $u\in U$, that is, for each  $X\in T_{x}(U/K_{\Theta})$ we have 
$$
dE_{u}J_{x}^{\Theta}X=J_{ux}^{\Theta}dE_{u}X.
$$
\end{definition}

A $U$-invariant almost complex structure $J^{\Theta}$ on $\mathbb{F}_{\Theta}$ is completely determined by $J^{\Theta}:\got{m}_{\Theta}\rightarrow\got{m}_{\Theta}$, where $\mathfrak{m}_\Theta$ is the tangent space at the origin of $\mathbb{F}_{\Theta}$. 

\begin{proposition}
There is a 1-1 correspondence between invariant almost complex structures $J^{\Theta}$ and linear maps  $J_{x_{0}}^{\Theta}$ on $T_{x_{0}}\mathbb{F}_{\Theta}$ that commute with the isotropy representation, that is,
$$
Ad^{U/K_{\Theta}}(k)J_{x_{0}}^{\Theta}=J_{x_{0}}^{\Theta}Ad^{U/K_{\Theta}}(k), \ \mbox{ for all } \ k\in K_{\Theta}. 
$$
\end{proposition}

Therefore an invariant almost complex structure  $J^{\Theta}$ satisfies $(J^{\Theta})^{2}=-1$ and commutes with the adjoint action of  $\got{k}_{\Theta}$ on  $\got{m}_{\Theta}$. Moreover  $J^{\Theta}(\got{g}_{\alpha})=\got{g}_{\alpha}$, for each $\alpha\in\Pi$. The eigenvalues  of  $J^{\Theta}$ are  $\pm i$ and the eigenvectors are $X_{\alpha}$, $\alpha\in\Pi$. 

Hence  $J^{\Theta}(X_{\alpha})=i \varepsilon_{\alpha}X_{\alpha}$, being $\varepsilon_{\alpha}=\pm 1$ with $\varepsilon_{\alpha}=-\varepsilon_{-\alpha}$. In this way we obtain a description of an invariant almost complex structure on $\mathbb{F}_{\Theta}$: they are completely described by a set of signs 
$$
\{\varepsilon_{\alpha}, \, \mbox{ with } \,  \varepsilon_{\alpha}=\pm 1, \  \alpha\in\Pi\setminus\langle\Theta\rangle,  \mbox{ such  that }  \varepsilon_{\alpha}=-\varepsilon_{-\alpha}\}.
$$

By simplicity we will denote an invariant almost complex structure just by $J$. As usual, we denote by $T^ {1,0}M$ the eigenspace of $J$ associated to the eigenvalue $+i$.

\begin{definition}
An almost complex structure $J$ on a differential manifold $M$ is said a {\it complex structure} (or integrable) if its distribution $T^ {1,0}M$ is integrable. The pair $(M,J)$ is called a {\it complex manifold}.

Two complex structures  $J_{1}$ and $J_{2}$ on  $M$ are equivalent if the complex manifolds $(M,J_{1})$ and $(M,J_{2})$ are biholomorphic, that is, if there exists a holomorphic map $\phi:(M,J_1) \to (M,J_2)$ with holomorphic inverse.
\end{definition}

The next three results due to  Borel-Hirzebruch concerning to invariant almost complex structure are very useful in this work.


\begin{proposition}[\cite{opa},13.4]\label{numest}
Let $M=G/K$ be an almost complex manifold and  $\got{m}=\got{m}_{1}\oplus\got{m}_{2}\oplus\cdots\oplus\got{m}_{s}$ be a decomposition of the tangent space at the origin into irreducible and non-equivalent sub-modules of the isotropy representation. Then $M$ admits $2^{s}$ invariant almost complex structures.  

If we identify conjugated structures, then $M$ admits $2^{s-1}$ invariant almost complex structures, up to conjugation.
\end{proposition}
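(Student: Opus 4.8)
The plan is to reduce the count to Schur's lemma applied to the isotropy representation. By the previous Proposition an invariant almost complex structure is the same datum as a linear map $J\colon\got{m}\to\got{m}$ with $J^{2}=-\mathrm{Id}$ that commutes with $Ad(k)|_{\got{m}}$ for every $k\in K$, so I would work throughout with $K$-equivariant endomorphisms of $\got{m}$.

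First I would show that any such $J$ preserves each summand of $\got{m}=\got{m}_{1}\oplus\cdots\oplus\got{m}_{s}$. Writing $J$ as a matrix of blocks $J_{ij}\colon\got{m}_{j}\to\got{m}_{i}$ relative to this splitting, every off-diagonal block is a $K$-equivariant map between two non-equivalent irreducible modules and hence vanishes by Schur's lemma; thus $J=J_{1}\oplus\cdots\oplus J_{s}$ with each $J_{i}\colon\got{m}_{i}\to\got{m}_{i}$ a $K$-equivariant endomorphism satisfying $J_{i}^{2}=-\mathrm{Id}$. Counting the invariant structures therefore amounts to counting the admissible $J_{i}$ on each irreducible factor independently.

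On a single factor I would use that the commutant $\End_{K}(\got{m}_{i})$ is a real division algebra, which for these modules is isomorphic to $\mathbb{C}$: indeed $\got{m}_{i}$ is the real form $(\got{m}_{\xi}+\got{m}_{-\xi})^{\tau}$ of an irreducible complex module $\got{m}_{\xi}$, and since $\got{m}_{\xi}$ and $\got{m}_{-\xi}$ are non-equivalent this is exactly the complex-type case. Fixing a generator $J_{i}^{0}$ with $(J_{i}^{0})^{2}=-\mathrm{Id}$, every element of the commutant has the form $a\,\mathrm{Id}+b\,J_{i}^{0}$ with $a,b\in\mathbb{R}$, and the condition $(a\,\mathrm{Id}+b\,J_{i}^{0})^{2}=(a^{2}-b^{2})\,\mathrm{Id}+2ab\,J_{i}^{0}=-\mathrm{Id}$ forces $2ab=0$ and $a^{2}-b^{2}=-1$, whose only real solutions are $a=0$, $b=\pm1$. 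So each factor admits exactly the two structures $J_{i}=\pm J_{i}^{0}$, giving $2^{s}$ invariant almost complex structures in total.

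For the count up to conjugation I would use that the conjugate of $J$ is $-J$, which in the block description simply flips every sign $J_{i}\mapsto-J_{i}$. The fixed-point equation $J=-J$ would force $J=0$, contradicting $J^{2}=-\mathrm{Id}$, so the involution $J\mapsto-J$ acts freely on the set of $2^{s}$ structures; the structures thus fall into $2^{s}/2=2^{s-1}$ conjugate pairs. The single point that needs genuine justification, rather than formal manipulation, is the identification $\End_{K}(\got{m}_{i})\cong\mathbb{C}$: were some factor of real or quaternionic type the local count would be $0$ or a full two-sphere of complex structures, so the complex type of the isotropy summands of a flag manifold is the hypothesis actually doing the work.
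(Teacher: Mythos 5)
The paper does not actually prove this proposition: it is imported verbatim as a citation to Borel--Hirzebruch (\cite{opa}, 13.4), so there is no internal proof to compare yours against. Judged on its own, your argument is correct and is essentially the classical one behind the cited result. The block-diagonal reduction via Schur's lemma is right (each $J_{ij}=\pi_i\circ J|_{\got{m}_j}$ is $K$-equivariant, hence zero between non-equivalent irreducibles), the computation in the commutant $\End_K(\got{m}_i)\cong\mathbb{C}$ forcing $J_i=\pm J_i^0$ is right, and the free involution $J\mapsto -J$ gives the $2^{s-1}$ count. You also correctly isolate the only step with real content: ruling out commutant $\mathbb{R}$ (no structure at all) and $\mathbb{H}$ (a two-sphere of structures, which would make the count infinite rather than $2^s$).

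One remark on that key step. As transcribed in the paper the proposition is stated for an arbitrary almost complex homogeneous space $M=G/K$, whereas your proof of complex type invokes the flag-manifold decomposition $\got{m}_i=(\got{m}_\xi+\got{m}_{-\xi})^{\tau}$ together with the non-equivalence of $\got{m}_\xi$ and $\got{m}_{-\xi}$ (Theorem \ref{142}). This is exactly the right move for this paper, since every application here is to a generalized flag manifold, and some such input is genuinely needed: the literal statement for arbitrary $G/K$ is only safe because in the equal-rank setting (isotropy group a centralizer of a torus) the two halves $W$ and $\overline{W}$ of $\got{m}_i^{\mathbb{C}}$ have disjoint, opposite torus weights, so they can never be equivalent and the quaternionic case cannot occur. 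Your closing sentence, identifying complex type of the summands as "the hypothesis actually doing the work," is precisely the honest reading of the Borel--Hirzebruch statement, and your proof covers every use the paper makes of it.
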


A root system is said to be {\bf closed} if, for every complementary roots $\alpha$ and $\beta$ such that  $\alpha+\beta$ is a root, $\alpha+\beta$ is again a complementary root.

The next lemma provides a useful criteria to determine when an invariant almost complex structure is integrable.

\begin{lemma}[\cite{opa}, 13]\label{order}
An invariant almost complex structure is integrable if there exist an order on the coordinates of the Cartan sub-algebra such that the corresponding complementary root system is positive and closed.

\end{lemma}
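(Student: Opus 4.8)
The strategy is to recast integrability, through the Newlander--Nirenberg theorem, as the statement that an explicit subspace of $\got{g}$ is a complex subalgebra, and then to extract that algebraic property from the hypotheses. Since $J(X_\alpha)=i\varepsilon_\alpha X_\alpha$, the $(+i)$-eigenspace, i.e.\ the holomorphic tangent space at the origin, is
$$
\got{m}^{1,0}=\bigoplus_{\alpha\in\Pi_M,\ \varepsilon_\alpha=+1}\got{g}_\alpha .
$$
By Newlander--Nirenberg, $J$ is integrable exactly when the distribution $T^{1,0}\mathbb{F}_\Theta$ is involutive; and because $J$ is $U$-invariant it suffices to test involutivity at the origin, where it amounts to
$$
\got{q}:=\got{k}_\Theta^{\mathbb{C}}\oplus\got{m}^{1,0}
$$
being closed under the bracket of $\got{g}$. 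Thus the first step is to justify this reduction, computing the Nijenhuis tensor at $x_0$ from the reductive decomposition $\got{u}=\got{k}_\Theta\oplus\got{m}_\Theta$ and observing that its vanishing is equivalent to $[\got{m}^{1,0},\got{m}^{1,0}]\subseteq\got{q}$, hence (since $\got{k}_\Theta^{\mathbb{C}}$ is a subalgebra that preserves $\got{m}^{1,0}$) to $\got{q}$ being a subalgebra.

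Next I would make $\got{q}$ explicit. Setting $\Pi_J^+=\{\alpha\in\Pi_M:\varepsilon_\alpha=+1\}$ and recalling $\got{k}_\Theta^{\mathbb{C}}=\got{h}\oplus\sum_{\gamma\in\langle\Theta\rangle}\got{g}_\gamma$, we get
$$
\got{q}=\got{h}\oplus\bigoplus_{\gamma\in S}\got{g}_\gamma,\qquad S=\langle\Theta\rangle\cup\Pi_J^+ .
$$
Using $[\got{h},\got{g}_\gamma]\subseteq\got{g}_\gamma$ together with $[\got{g}_\gamma,\got{g}_\delta]=\mathbb{C}\,X_{\gamma+\delta}$ when $\gamma+\delta$ is a root, $[\got{g}_\gamma,\got{g}_{-\gamma}]\subseteq\got{h}$, and $[\got{g}_\gamma,\got{g}_\delta]=0$ otherwise, the subalgebra condition collapses to a combinatorial one: whenever $\gamma,\delta\in S$ and $\gamma+\delta$ is a root, then $\gamma+\delta\in S$. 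So the task reduces to proving that $S$ is closed under addition of roots.

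The decisive step is the case analysis for $\gamma,\delta\in S$. If $\gamma,\delta\in\langle\Theta\rangle$ then $\gamma+\delta\in\langle\Theta\rangle$, since $\langle\Theta\rangle$ is the sub-root-system generated by $\Theta$. If $\gamma\in\langle\Theta\rangle$ and $\delta\in\Pi_J^+$, I would invoke the defining property of an invariant almost complex structure, namely that $\ad(\got{k}_\Theta^{\mathbb{C}})$ commutes with $J$ and hence preserves $\got{m}^{1,0}$; since $\got{g}_\gamma\subseteq\got{k}_\Theta^{\mathbb{C}}$ and $\got{g}_\delta\subseteq\got{m}^{1,0}$, this forces $\got{g}_{\gamma+\delta}\subseteq\got{m}^{1,0}$, i.e.\ $\gamma+\delta\in\Pi_J^+$. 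The genuinely substantive case is $\gamma,\delta\in\Pi_J^+$: here the existence of the order makes $\gamma+\delta$ positive, so $\varepsilon_{\gamma+\delta}=+1$ once $\gamma+\delta$ is known to be complementary, while closedness of the complementary root system guarantees precisely that $\gamma+\delta\in\Pi_M$; together these give $\gamma+\delta\in\Pi_J^+\subseteq S$. (The degenerate possibility $\gamma+\delta=0$ cannot occur among elements of $\Pi_J^+$, because $\varepsilon_\alpha=-\varepsilon_{-\alpha}$.) Once all cases are closed, $\got{q}$ is a subalgebra and $J$ is integrable.

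I expect the main obstacle to be the first step, namely pinning down the precise Lie-algebraic translation of involutivity of $T^{1,0}$ for a $U$-invariant structure and confirming that it is exactly the condition ``$\got{q}$ is a subalgebra''; the subsequent root combinatorics is routine. It is worth noting that the two hypotheses enter only in the last case, with ``closed'' keeping the sum among complementary roots and the order supplying the sign, whereas the cross term $\langle\Theta\rangle+\Pi_J^+$ is controlled entirely by the isotropy invariance of $J$.
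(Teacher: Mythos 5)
Your proof is correct, but there is nothing in the paper to compare it against: the paper states this lemma as a citation to Borel--Hirzebruch (\cite{opa}, \S 13) and gives no proof of its own. What you have written is essentially the classical argument behind the cited result: by Newlander--Nirenberg and invariance, integrability is equivalent to $\got{q}=\got{k}_{\Theta}^{\mathbb{C}}\oplus\got{m}^{1,0}$ being a complex subalgebra of $\got{g}$, and since root spaces are one-dimensional with $[\got{g}_{\gamma},\got{g}_{\delta}]=\got{g}_{\gamma+\delta}$ whenever $\gamma,\delta,\gamma+\delta$ are roots, this collapses to closedness of $S=\langle\Theta\rangle\cup\Pi_{J}^{+}$ under addition; your three-case analysis places the hypotheses exactly where they are needed, and you correctly observe that the mixed case $\langle\Theta\rangle+\Pi_{J}^{+}$ is forced by isotropy-invariance of $J$ alone, with no appeal to the order or to closedness. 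One small step deserves to be made explicit in the decisive case $\gamma,\delta\in\Pi_{J}^{+}$: the inference ``$\gamma+\delta$ positive and complementary $\Rightarrow\varepsilon_{\gamma+\delta}=+1$'' requires knowing that $\Pi_{J}^{+}$ is precisely the set of positive complementary roots, which follows from the hypothesis (every $\varepsilon=+1$ root is positive) combined with $\varepsilon_{-\alpha}=-\varepsilon_{\alpha}$: for each complementary pair $\{\alpha,-\alpha\}$ exactly one member lies in $\Pi_{J}^{+}$, and by hypothesis it must be the positive one. With that one-line remark added, the argument is complete.
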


The next proposition tell us when two invariant complex structures on flag manifolds are equivalent.

\begin{proposition}[\cite{opa}, 13]\label{si}
Let $J_1$ and $J_2$ be two invariant complex structures on $G/K$. Assume that there exist an automorphism of the Cartan sub-algebra  $\mathfrak{h}$ of $\mathfrak{g}$ that send the root system associated to $J_1$ onto the root system associated to $J_2$ and keep fixed the root system of $K$. 

Then these two invariant complex structures are equivalent under an automorphism of $G$ that keep $K$ fixed. 
\end{proposition}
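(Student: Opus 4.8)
The plan is to realize the abstract root-system automorphism geometrically, by lifting it to an automorphism of $\mathfrak{g}$ (hence of $G$) that descends to a biholomorphism of the flag manifold. First I would recall the combinatorial encoding established above: an invariant complex structure $J_i$ on $G/K$ is determined by a set of signs $\{\varepsilon^{(i)}_\alpha\}$ on the complementary roots $\alpha\in\Pi_M=\Pi\setminus\langle\Theta\rangle$, with $\varepsilon^{(i)}_\alpha=-\varepsilon^{(i)}_{-\alpha}$, and the holomorphic tangent space at the origin is $\got{m}^{1,0}_i=\sum_{\alpha:\,\varepsilon^{(i)}_\alpha=+1}\got{g}_\alpha$. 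Writing $\Pi_{J_i}=\{\alpha\in\Pi_M:\varepsilon^{(i)}_\alpha=+1\}$ for the associated root system, the hypothesis supplies an automorphism $\phi$ of $\got{h}$ with $\phi(\Pi_{J_1})=\Pi_{J_2}$ and $\phi(\langle\Theta\rangle)=\langle\Theta\rangle$; in particular $\phi$ permutes the full root system $\Pi=\Pi_{J_1}\cup(-\Pi_{J_1})\cup\langle\Theta\rangle$ and satisfies $\phi(-\alpha)=-\phi(\alpha)$.

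The central step is to lift $\phi$ to an automorphism $\Phi$ of the Lie algebra $\got{g}$ preserving the Cartan sub-algebra $\got{h}$. This is precisely the content of the Chevalley isomorphism theorem and the structure of $\Aut(\got{g})$: the subgroup of $\Aut(\got{g})$ normalizing $\got{h}$ surjects onto the automorphism group of the root system $\Pi$, with the Weyl-group part lifting to inner automorphisms through representatives in $N(T)$ and any diagram-automorphism part lifting after a choice of pinning. Since $\phi$ induces an automorphism of $\Pi$, it admits such a lift $\Phi$, which necessarily maps the one-dimensional space $\got{g}_\alpha$ onto $\got{g}_{\phi(\alpha)}$, say $\Phi(X_\alpha)=c_\alpha X_{\phi(\alpha)}$ with $c_\alpha\neq 0$. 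I expect this lifting to be the main obstacle, since one must ensure that the structure constants $m_{\alpha,\beta}$ are compatible with a genuine $\got{g}$-automorphism; this is guaranteed by the isomorphism theorem, but it is the point that requires care.

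Because $\phi$ fixes $\langle\Theta\rangle$ setwise, $\Phi$ preserves $\got{k}^{\mathbb{C}}=\got{h}\oplus\sum_{\alpha\in\langle\Theta\rangle}\got{g}_\alpha$, hence preserves $\got{k}_\Theta$; as $K_\Theta$ is a centralizer of a torus and therefore connected, the corresponding automorphism of $G$ keeps $K$ invariant. Thus $\Phi$ descends to a well-defined map $\bar\Phi:G/K\to G/K$, $gK\mapsto\Phi(g)K$, fixing the origin. Finally I would verify that $\bar\Phi$ intertwines the two structures. Since $\alpha\in\Pi_{J_1}$ if and only if $\phi(\alpha)\in\phi(\Pi_{J_1})=\Pi_{J_2}$, and using $\phi(-\alpha)=-\phi(\alpha)$ together with the disjointness of $\Pi_{J_2}$ and $-\Pi_{J_2}$, one gets $\varepsilon^{(1)}_\alpha=\varepsilon^{(2)}_{\phi(\alpha)}$ for all $\alpha\in\Pi_M$. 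As $\Phi$ is $\mathbb{C}$-linear, at the origin (identifying $d\bar\Phi$ with $\Phi|_{\got{m}_\Theta}$) one computes
$$
d\bar\Phi(J_1 X_\alpha)=i\,\varepsilon^{(1)}_\alpha\,c_\alpha X_{\phi(\alpha)}=i\,\varepsilon^{(2)}_{\phi(\alpha)}\,c_\alpha X_{\phi(\alpha)}=J_2\bigl(d\bar\Phi\,X_\alpha\bigr),
$$
so $d\bar\Phi\circ J_1=J_2\circ d\bar\Phi$ on $\got{m}_\Theta$. By $U$-invariance of $J_1,J_2$ and the equivariance of $\bar\Phi$, the same identity holds at every point, so $\bar\Phi$ is a biholomorphism from $(G/K,J_1)$ to $(G/K,J_2)$ induced by an automorphism of $G$ that keeps $K$ fixed, which is exactly the asserted equivalence.
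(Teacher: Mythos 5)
Your proposal is correct, but note that the paper itself offers no proof to compare against: Proposition~\ref{si} is quoted verbatim from Borel--Hirzebruch (\cite{opa}, \S 13) and used as a black box. What you have written is, in substance, the classical Borel--Hirzebruch argument, reconstructed correctly: encode $J_i$ by the sign sets $\varepsilon^{(i)}_\alpha$, observe that the hypothesis forces $\phi$ to permute all of $\Pi$ and to satisfy $\varepsilon^{(1)}_\alpha=\varepsilon^{(2)}_{\phi(\alpha)}$, lift $\phi$ to $\Phi\in\Aut(\got{g})$ stabilizing $\got{h}$, check $\Phi(\got{k}^{\mathbb{C}})=\got{k}^{\mathbb{C}}$ so that $\Phi$ descends to $\bar\Phi$ on $G/K$, and verify the intertwining $d\bar\Phi\circ J_1=J_2\circ d\bar\Phi$ at the origin and then globally by equivariance. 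You also correctly identify the lifting step as the crux; it is covered by the isomorphism theorem, since any linear automorphism of $\got{h}^*$ permuting $\Pi$ automatically preserves root strings, hence Cartan integers, hence lies in $W\rtimes\Aut(\mathrm{Dynkin})$, and both factors lift.

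Two small points deserve explicit mention to make the argument airtight. First, you lift $\phi$ to an automorphism of $\got{g}$, but the proposition asks for an automorphism of the group $G$; this is automatic if $G$ is simply connected, and in general one should remark that the flag manifold $G/P_\Theta$ (equivalently $U/K_\Theta$) is unchanged under passing to a covering group, since the center lies in every parabolic, so one may assume $G$ simply connected without loss. Second, if one insists on working in the compact presentation $U/K_\Theta$, the lift $\Phi$ must be chosen to preserve the compact real form $\got{u}$ (Weyl-group elements via $\mathrm{Ad}(N_U(T))$ and diagram automorphisms adapted to the Weyl basis do this); otherwise one simply works with the complex picture $G/P_\Theta$, as you implicitly do. Neither point affects the validity of your argument.
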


\section{Formality in the sense of Sullivan}

A commutative differential graduated algebra $(A,d)$ is called {\it formal} if it is weakly equivalent to the cohomology algebra $(H(A,\mathbb{Q}),0)$, that is, there exist a sequence of quasi-isomorphism (morphism of commutative differential graduated algebras that induce isomorphism in cohomology) in the following way
$$
(A,d)\stackrel{\simeq}{\longrightarrow}\cdots\stackrel{\simeq}{\longrightarrow}\cdots\stackrel{\simeq}{\longrightarrow}\cdots\stackrel{\simeq}{\longrightarrow}(H(A,\mathbb{Q}),0).
$$ 

A differential manifold is formal (in the sense of Sullivan) if their de Rham algebra of differential forms and the cohomology algebra (with the zero-differential) are weakly equivalent. For us, {\it formal manifolds}  mean {\it formal in the sense of Sullivan}.

An important source of examples of formality in the sense of Sullivan are the $k$-symmetric spaces.
\begin{theorem}[\cite{Kote}, teo. 7]
Each $k$-symmetric space of a compact Lie groups is formal in the sense of Sullivan. 
\end{theorem}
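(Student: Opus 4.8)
The plan is to reduce the statement to a purely algebraic assertion about the finite-dimensional algebra of invariant forms, and then to exploit the order-$k$ symmetry to equip that algebra with a second grading for which a purity argument forces formality.

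First I would pass from the full de Rham algebra to invariant forms. Since $M=G/K$ is a compact homogeneous space with $G$ compact and connected, averaging a form over $G$ against the Haar measure is a chain map inducing the identity on cohomology; by Cartan's theorem the inclusion of the sub-CDGA of $G$-invariant forms $A\ce\Omega^{*}(M)^{G}\cong(\Lambda\got{m}^{*})^{K}$ into $(\Omega^{*}(M),d)$ is therefore a quasi-isomorphism. The differential on $A$ is the Chevalley--Eilenberg differential, determined by the bracket of $\got{u}$ composed with the projection onto $\got{m}$. Hence it suffices to prove that the finite-dimensional CDGA $(A,d)$ is formal.

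Next I would bring in the symmetry. By Proposition \ref{prop11} the $k$-symmetric structure is produced by an automorphism $\theta$ of $G$ of order $k$ whose fixed subalgebra is exactly $\got{k}$; its differential $\sigma\ce d\theta$ is a Lie-algebra automorphism of $\got{u}$ of order $k$, fixing $\got{k}$ and admitting no nonzero fixed vector in $\got{m}$. Complexifying and decomposing $\got{m}$ into eigenspaces of $\sigma$,
$$\got{m}^{\mathbb{C}}=\bigoplus_{j=1}^{k-1}\got{m}^{(j)},\qquad \sigma|_{\got{m}^{(j)}}=\zeta^{j}\,\mathrm{id},\ \ \zeta=e^{2\pi i/k},$$
endows $\Lambda(\got{m}^{\mathbb{C}})^{*}$ with a $\mathbb{Z}/k$-grading by weight. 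Because $\sigma$ is an algebra automorphism one has $[\got{m}^{(i)},\got{m}^{(j)}]\subseteq\got{g}^{(i+j)}$ (indices mod $k$), so after projecting to $\got{m}$ the Chevalley--Eilenberg differential preserves weight. Thus $(A^{\mathbb{C}},d)$ is bigraded by (de Rham degree, weight), with $d$ of bidegree $(1,0)$ and product additive in both indices. For $k=2$ this already finishes the argument: there $\sigma=-\mathrm{id}$ on $\got{m}$, the weight of a degree-$p$ monomial is $p\bmod 2$, and commutation of $d$ with $\sigma$ forces $d=0$, so $A^{\mathbb{C}}$ is its own cohomology. The content lies in the case $k\geq3$.

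Finally I would deduce formality from purity. The aim is to show that the weight decomposition is \emph{pure} on cohomology, that is, $H^{n}(A^{\mathbb{C}})$ is concentrated in a single weight $w(n)$ which is a fixed linear function of $n$. Granting this, the standard purity argument applies: the diagonal cocycles (those whose weight matches the prescribed value in their degree) form a sub-CDGA $Z\subseteq A^{\mathbb{C}}$ on which $d$ vanishes and whose inclusion is a quasi-isomorphism, so that
$$A^{\mathbb{C}}\hookleftarrow Z\xrightarrow{\ \cong\ }(H^{*}(A^{\mathbb{C}}),0)$$
is a zig-zag of quasi-isomorphisms to the cohomology algebra; descending from $\mathbb{C}$ to $\mathbb{Q}$ gives formality in the sense of Sullivan. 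The main obstacle is precisely establishing purity, for when $k\geq3$ the weight is \emph{not} determined by the de Rham degree at the level of the algebra, and one must control it at the level of cohomology. I expect to obtain it from the explicit root-space description of the $K$-invariants in $\Lambda\got{m}^{*}$ together with the compatibility of the eigenvalue grading with the Hodge decomposition of the invariant metric. For the subclass of generalized flag manifolds formality is in any case classical, since these carry Kähler (indeed projective) structures; the weight-purity method above is what is needed to treat the $k$-symmetric spaces that are not Kähler.
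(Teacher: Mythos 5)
First, a point of order: this theorem is not proved in the paper at all --- it is imported verbatim from Kotschick--Terzi\'c [\cite{Kote}] as a citation, so there is no internal proof to compare against. Your proposal therefore has to stand on its own, and it does not: your reduction to the invariant-forms algebra $(\Lambda\got{m}^{*})^{K}$ and the eigenspace decomposition under $\sigma=d\theta$ are correct, and your $k=2$ case is exactly Cartan's classical argument for symmetric spaces, but the proof collapses at precisely the two steps needed for $k\geq 3$.

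The first gap is the purity claim, which you defer (``I expect to obtain it\dots''). It is not merely unproven; it is structurally hopeless, because for $k$-symmetric spaces defined by \emph{inner} automorphisms --- which by Proposition \ref{flagksim} includes every generalized flag manifold, the case of main interest here --- the symmetry carries no cohomological weight information at all. Indeed, if $\theta$ is conjugation by $g$, then $g$ lies in a maximal torus inside its own centralizer, hence $g\in K$, and the symmetry $s(xK)=gxg^{-1}K=gxK$ is left translation by $g$, homotopic to the identity since $G$ is connected; thus $s^{*}=\mathrm{id}$ on $H^{*}(G/K)$ and every class has weight $0$. Purity then holds only in the vacuous form $w(n)\equiv 0$, and your grading gives nothing beyond the invariant forms themselves. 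The second gap is that even granting purity, the deduction of formality fails for $\mathbb{Z}/k$-valued weights: your sub-CDGA $Z$ of diagonal cocycles does have zero differential and $H(Z)=Z\to H(A^{\mathbb{C}})$ is surjective, but it need not be injective, since a diagonal cocycle can be exact in $A^{\mathbb{C}}$ (its primitive may even be chosen of the same weight, by projecting onto weight components). The classical purity-implies-formality arguments require weights in $\mathbb{Z}$ varying strictly monotonically with the degree precisely to exclude such primitives; cyclic weights cannot be ordered. In the inner case your argument would ``prove'' that no weight-zero invariant cocycle is ever exact, which is false whenever $d\neq 0$ on invariant forms --- e.g.\ already for $SU(3)/T$, where $(\Lambda^{2}\got{m}^{*})^{T}$ is $3$-dimensional while $b_{2}=2$.

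For the record, the proof in the cited reference proceeds along entirely different lines: when the defining automorphism is inner one has $\rk K=\rk G$, the cohomology of $G/K$ is concentrated in even degrees, and such rationally elliptic homogeneous spaces are formal; when it is outer, the Wolf--Gray classification of finite-order outer automorphisms of compact simple Lie groups reduces the statement to an explicit list of spaces that are checked directly. Your averaging and eigenspace setup is a sensible starting point (it isolates the inner/outer dichotomy correctly), but the purity mechanism you propose cannot be repaired to handle the inner case, which is the generic one.
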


Since generalized flag manifolds are $k$-symmetric (see Proposition \ref{flagksim}), every generalized flag manifolds are formal in the sense of Sullivan.

\section{Geometric formality}

Let $M$ be a compact oriented differential manifold, $g$ a Riemannian metric on $M$ and $\Omega^{k}(M)$ the complex of degree $k$ differential forms on $M$. Therefore the de Rham complex is the following sequence of differential operators
$$
0\rightarrow \Omega^{0}(M)\stackrel{d_{0}}{\rightarrow} \Omega^{1}(M)\stackrel{d_{1}}{\rightarrow}...\stackrel{d_{n-1}}{\rightarrow} \Omega^{n}(M)\stackrel{d_{n}}{\rightarrow} 0,
$$
where $d_{k}$ denote the exterior derivative on $\Omega^{k}(M)$. The de Rham cohomology is the sequence of vector spaces defined by
$$
H^{k}(M,\mathbb{R})=\dfrac{ker(d_{k})}{Im(d_{k-1})}.
$$ 
We define the adjoint of the exterior derivative $\delta$ called co-differential as follows:

for each $\alpha\in\Omega^{k}(M)$ and $\beta\in\Omega^{k+1}(M)$, $\delta$ is given by
$$
\langle d\alpha,\beta\rangle_{k+1}=\langle\alpha,\delta\beta\rangle_{k},
$$
where $\langle \ ,\ \rangle$ is the metric induced in $\Omega^{k}(M)$. The Laplacian acting on forms is defined by 
$\Delta=d\delta+\delta d$ and is called Laplace-Beltrami operator.

\begin{definition}
The space of $k$-harmonic forms is defined by $$\mathcal{H}^{k}_{\Delta}(M)=\{\alpha\in \Omega^{k}(M);\Delta(\alpha)=0\}.$$
\end{definition}

We remark that a form $\omega$ is harmonic if it is closed and co-closed, that is, $d\omega=0$ and $\delta\omega=0$.

\begin{theorem}(Hodge)\label{Hod}
Each de Rham cohomology class of a compact oriented differentiable manifold has unique harmonic representative.
\end{theorem}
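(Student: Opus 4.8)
The plan is to deduce the theorem from the Hodge decomposition, whose proof rests on the elliptic theory of the Laplace--Beltrami operator $\Delta$. First I would record the elementary fact that, on the compact manifold $M$, a form is harmonic if and only if it is closed and co-closed. Indeed, for $\omega\in\Omega^{k}(M)$ integration by parts (legitimate since $M$ is compact and oriented) gives
$$\langle\Delta\omega,\omega\rangle=\langle d\delta\omega+\delta d\omega,\omega\rangle=\|\delta\omega\|^{2}+\|d\omega\|^{2},$$
so $\Delta\omega=0$ forces $d\omega=0$ and $\delta\omega=0$, while the converse is immediate. Thus $\mathcal{H}^{k}_{\Delta}(M)$ is exactly the space of forms that are simultaneously closed and co-closed, which already makes every harmonic form a legitimate cohomology representative.

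The analytic core of the argument is the $L^{2}$-orthogonal decomposition
$$\Omega^{k}(M)=\mathcal{H}^{k}_{\Delta}(M)\oplus\Delta\bigl(\Omega^{k}(M)\bigr).$$
To obtain this I would use that $\Delta$ is a second-order, formally self-adjoint operator which is \emph{elliptic}: its principal symbol at a nonzero covector $\xi$ is $-|\xi|^{2}\,\mathrm{Id}$, which is invertible. On the compact manifold $M$ one then applies the standard package of elliptic theory --- passage to Sobolev completions, the elliptic a priori estimate, and the Fredholm alternative --- to conclude that $\ker\Delta=\mathcal{H}^{k}_{\Delta}(M)$ is finite-dimensional, that $\Delta$ has closed range, and that this range is precisely the orthogonal complement of $\ker\Delta$. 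Elliptic regularity ensures that all harmonic (and more generally all weak) solutions are smooth, so nothing is lost by working in Sobolev spaces. \emph{This step is the main obstacle}: everything else in the proof is formal manipulation, whereas here one genuinely needs the functional-analytic machinery for elliptic operators on compact manifolds.

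Granting the decomposition, I would refine it. Since $\Delta=d\delta+\delta d$, every element $\Delta\eta$ lies in $d(\Omega^{k-1})+\delta(\Omega^{k+1})$; conversely harmonic forms are orthogonal to both $\im d$ and $\im\delta$, using $\langle h,d\beta\rangle=\langle\delta h,\beta\rangle=0$ and the dual identity. A short computation upgrades the decomposition to
$$\Omega^{k}(M)=\mathcal{H}^{k}_{\Delta}(M)\oplus d\bigl(\Omega^{k-1}(M)\bigr)\oplus\delta\bigl(\Omega^{k+1}(M)\bigr),$$
with the three summands mutually $L^{2}$-orthogonal. From here the theorem follows. For existence, write a closed form $\alpha$ as $\alpha=h+d\beta+\delta\gamma$; applying $d$ and using $d\alpha=0$ yields $d\delta\gamma=0$, whence $\|\delta\gamma\|^{2}=\langle d\delta\gamma,\gamma\rangle=0$, so $\alpha=h+d\beta$ and the harmonic part $h$ represents the class $[\alpha]$. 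For uniqueness, if $h_{1}$ and $h_{2}$ are harmonic with $h_{1}-h_{2}=d\eta$ exact, then $h_{1}-h_{2}$ is both harmonic and exact, and orthogonality of $\mathcal{H}^{k}_{\Delta}(M)$ to $\im d$ gives $\|h_{1}-h_{2}\|^{2}=\langle h_{1}-h_{2},d\eta\rangle=0$, so $h_{1}=h_{2}$. Hence each de Rham class contains exactly one harmonic representative.
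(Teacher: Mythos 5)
Your proof is correct. Note that the paper itself offers no proof of this statement: it quotes Hodge's theorem as classical background (Theorem \ref{Hod}) and uses it only to pass from cohomology classes to harmonic representatives in the geometric-formality arguments. Your argument is the standard one: reduce to the $L^{2}$-orthogonal Hodge decomposition $\Omega^{k}(M)=\mathcal{H}^{k}_{\Delta}(M)\oplus d\bigl(\Omega^{k-1}(M)\bigr)\oplus\delta\bigl(\Omega^{k+1}(M)\bigr)$, then get existence by killing the $\delta\gamma$ component of a closed form via $\|\delta\gamma\|^{2}=\langle d\delta\gamma,\gamma\rangle=0$, and uniqueness from orthogonality of harmonic forms to $\operatorname{im} d$. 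All the formal manipulations (harmonic $\Leftrightarrow$ closed and co-closed on a compact manifold, the refinement of the coarse decomposition $\mathcal{H}^{k}_{\Delta}\oplus\Delta(\Omega^{k})$ into the three-term one, and the existence/uniqueness deductions) are carried out correctly, and you rightly identify the one step that cannot be done by formal manipulation --- the closed-range/Fredholm property of the elliptic operator $\Delta$ on the compact manifold, obtained from Sobolev theory, a priori estimates, and elliptic regularity --- as the analytic core that must be imported. Black-boxing that package is appropriate here, since proving it from scratch is a substantial piece of PDE theory and is exactly what every standard reference does before deducing the statement you were asked to prove.
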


Given a Riemannian manifold $(M,g)$, the wedge product of harmonic forms is not harmonic in general. 

Sullivan in \cite{Sul} proved that there exists topological obstructions to a Riemannian manifold admits a metric such that the wedge product of harmonic form is harmonic. This motivated the following definition.

\begin{definition}[Kotschick, \cite{Kots}]
A Riemannian metric is called {\it (metrically) formal} if every wedge product of harmonic forms is harmonic. A smooth manifold is called {\it geometrically formal} if it admit a formal Riemannian metric. 
\end{definition}

Examples of geometrically formal manifolds are the compact globally symmetric spaces and spheres. The cartesian product of geometrically formal manifolds (with the product metric) is again geometrically formal.


Other examples of geometrically formal manifolds are (see \cite{Ko1} for details): $\mathbb{H}P^{2}=Sp(3)/(Sp(2)\times Sp(1))$, $\mathbb{O}P^{2}=F_{4}/Spin(9)$, $G_{2}/SO(4)$, real Stiefel manifolds $V_{4}(\mathbb{R}^{2n+1})=SO(2n+1)/SO(2n-3), n\geq3$ and $V_{3}(\mathbb{R}^{2n})=SO(2n)/SO(2n-3), n\geq3$.

Geometric formality implies in the formality in the sense of Sullivan. In \cite{Kote} one can find examples of manifolds that are formal in the sense of Sullivan but not geometrically formal. 

Clearly the problem of finding formal metric is a question in Riemannian geometry. On the other hand the existence of a formal metric implies in a restriction on the topology of  $M$. 

In this work we will proceed a careful analysis in the cohomology ring on a given homogeneous space in order to find topological obstruction to existence of the formal metric.

From this topological approach we cite \cite{Kote}. In this work Kotschick and Terzi\'c proved that full flags manifolds of the classical Lie groups $SU(n)$, $SO(2n)$, $SO(2n)$, $Sp(n)$ and the full flag manifold of the exceptional Lie group $G_2$, do not admit any formal metric. 

We will prove that the full flag manifolds of the exceptional Lie groups $E_6, E_7,E_8$ and $F_4$ do not admit any formal metric.

\section{Geometric formality for full flag manifolds}


Kotschick and Terzi\'c have shown in \cite{Kote} that full flag manifolds associated to the simple Lie groups $SU(n)$, $SO(2n+1)$, $Sp(n)$, $SO(2n)$ and $G_{2}$ are not geometrically formal. In this section we will show that the full flag manifold associated to $F_{4}$, $E_{6}$, $E_{7}$ and $E_{8}$ is not geometrically formal too. This will give a complete understanding of geometric formality on full flag manifolds of simple Lie groups, according the Cartan-Killing classification.

Let us recall some results about the cohomology structure of full flag manifolds of classical Lie groups.

\begin{proposition}[\cite{Kote}]\label{p1}
The class represented by 
\begin{equation}\label{a8}
x_{1}^{\alpha_{1}}x_{2}^{\alpha_{2}}\cdots x_{n}^{\alpha_{n}},\ \ \ 0\leq\alpha_{i}\leq i, \ \ 1\leq i\leq n,
\end{equation}
form a basis for the cohomology of $SU(n+1)/T^{n}$ as a vector space. The multiplicative relations between the $x_{1},\cdots,x_{n}$ are given by:
\begin{equation}\label{a9}
\sum_{i_{1}+\cdots+i_{p}=n-p+2}x_{n-p+1}^{i_{p-1}}x_{n-p+2}^{i_{p-1}}\cdots x_{n-1}^{i_{2}}x_{n}^{i_{1}}=0, \ \ 1\leq p\leq n.
\end{equation}
\end{proposition}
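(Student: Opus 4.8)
The plan is to identify $SU(n+1)/T^{n}$ with the complete complex flag manifold of $\mathbb{C}^{n+1}$ and to compute its cohomology by combining Borel's description with the tower-of-projective-bundles structure. By the theorem of Borel (cf. \cite{opa}), $H^{*}(SU(n+1)/T^{n};\mathbb{Q})$ is the coinvariant algebra $\mathbb{Q}[t_{1},\dots,t_{n+1}]/(e_{1},\dots,e_{n+1})$, where the $t_{i}$ are degree-$2$ generators permuted by the Weyl group $S_{n+1}$ and $e_{k}$ is the $k$-th elementary symmetric polynomial; the relation $e_{1}=t_{1}+\cdots+t_{n+1}=0$ records the passage from $U(n+1)$ to $SU(n+1)$. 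I would then realize the same space as an iterated projective bundle
\[
Fl(\mathbb{C}^{n+1})\to Fl(\mathbb{C}^{n})\to\cdots\to\mathbb{P}(\mathbb{C}^{n+1})=\mathbb{CP}^{n}\to\mathrm{pt},
\]
in which each arrow is a $\mathbb{CP}^{i}$-bundle, and let $x_{i}$ be the first Chern class of the tautological line bundle introduced at the stage whose fibre is $\mathbb{CP}^{i}$.

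First I would establish the additive statement \eqref{a8}. Applying the Leray--Hirsch theorem repeatedly, the base $\mathbb{CP}^{n}$ contributes the powers $1,x_{n},\dots,x_{n}^{n}$, and each projective bundle with fibre $\mathbb{CP}^{i}$ contributes free module generators $1,x_{i},\dots,x_{i}^{i}$ over the cohomology of its base. Multiplying these bases through the tower shows at once that the monomials $x_{1}^{\alpha_{1}}\cdots x_{n}^{\alpha_{n}}$ with $0\le\alpha_{i}\le i$ form a $\mathbb{Q}$-basis. As a consistency check, their number is $2\cdot 3\cdots(n+1)=(n+1)!=|S_{n+1}|$, which equals the Euler characteristic and the total Betti number of the flag manifold.

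Next I would derive the multiplicative relations. The triviality of the rank-$(n+1)$ bundle over the flag manifold yields $\prod_{i=1}^{n+1}(1+x_{i})=1$ among total Chern classes, where $x_{n+1}=-(x_{1}+\cdots+x_{n})$ by the $SU$-condition; at each stage of the tower this identity specializes to the Grothendieck relation $x_{i}^{\,i+1}+c_{1}x_{i}^{\,i}+\cdots+c_{i+1}=0$ of the corresponding projective bundle, with the $c_{j}$ being Chern classes of the quotient bundle expressed in the earlier variables. Inverting the total Chern class stage by stage converts these into the vanishing of complete homogeneous symmetric polynomials, $h_{i+1}(x_{i},x_{i+1},\dots,x_{n})=0$; setting $i=n-p+1$ this is precisely the family \eqref{a9}.

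The step I expect to be the main obstacle is matching these iterated Grothendieck relations exactly to the stated form \eqref{a9}: one must identify, at each level of the tower, which tautological line bundle carries the class $x_{i}$, express the Chern classes $c_{j}$ of the relevant quotient bundle in terms of the earlier $x_{k}$, and carry out the symmetric-function elimination that turns the relations $e_{k}=0$ into the relations $h_{k}=0$ with the correct index ranges. Once this bookkeeping is settled, completeness follows from a dimension count: the quotient of $\mathbb{Q}[x_{1},\dots,x_{n}]$ by the ideal generated by \eqref{a9} already carries the monomial basis of the previous paragraph and hence has dimension $(n+1)!$, so those relations generate the full ideal and no others can be present.
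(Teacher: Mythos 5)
The paper offers no proof of this proposition at all: it is quoted from Kotschick--Terzi\'c \cite{Kote}, so there is no internal argument to compare with, and your proposal has to be judged on its own merits. On those merits it is correct, and it is essentially the standard argument behind the cited result: Borel's presentation $H^{*}(SU(n+1)/T^{n};\mathbb{Q})\cong\mathbb{Q}[t_{1},\dots,t_{n+1}]/(e_{1},\dots,e_{n+1})$, Leray--Hirsch along the iterated projective-bundle structure to get the additive basis \eqref{a8} (with the count $\prod_{i=1}^{n}(i+1)=(n+1)!=|W(SU(n+1))|$ as a check), and symmetric-function elimination to get \eqref{a9}. The key identity you defer to ``bookkeeping'' does work out: modulo the ideal $(e_{1},\dots,e_{n+1})$ one has $\prod_{j=i}^{n}(1-x_{j}t)^{-1}\equiv(1-x_{n+1}t)\prod_{j<i}(1-x_{j}t)$, which is a polynomial of degree $i$ in $t$, hence $h_{k}(x_{i},\dots,x_{n})=0$ for all $k\geq i+1$; taking $k=i+1$ and $i=n-p+1$ gives \eqref{a9}. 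Note that your reading of \eqref{a9} as the vanishing of the complete homogeneous symmetric polynomial $h_{n-p+2}(x_{n-p+1},\dots,x_{n})$ is the correct one --- the formula as printed in the paper has a typo (the exponent $i_{p-1}$ appears on two consecutive variables where $i_{p}$, $i_{p-1}$ are meant).

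Two points in your write-up should be tightened. First, the tower is mislabelled: there is no projective-bundle map $Fl(\mathbb{C}^{n+1})\to Fl(\mathbb{C}^{n})$; the intermediate spaces of the iterated construction are the partial flag manifolds $Fl_{1,\dots,k}(\mathbb{C}^{n+1})$ (equivalently, flag bundles of the successive tautological quotient bundles over $\mathbb{CP}^{n}$), while the fibration of $Fl(\mathbb{C}^{n+1})$ over $\mathbb{CP}^{n}$ with fibre $Fl(\mathbb{C}^{n})$ is a fibre bundle but not a projective bundle. This does not affect the cohomological conclusions, but the spaces in your displayed tower are not the ones you name. Second, the closing dimension count is phrased circularly: that the quotient by the ideal $J$ generated by \eqref{a9} ``already carries the monomial basis'' is what needs proving. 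The clean argument is: (a) in lexicographic order with $x_{1}>\dots>x_{n}$, the relation for $p$ has leading monomial $x_{n-p+1}^{n-p+2}$, so every monomial can be rewritten modulo $J$ as a combination of monomials with $\alpha_{i}\leq i$, giving $\dim\mathbb{Q}[x_{1},\dots,x_{n}]/J\leq(n+1)!$; (b) since $J$ is contained in the true relation ideal $I$ and $\dim\mathbb{Q}[x_{1},\dots,x_{n}]/I=(n+1)!$ by Leray--Hirsch, the surjection $\mathbb{Q}[x_{1},\dots,x_{n}]/J\to\mathbb{Q}[x_{1},\dots,x_{n}]/I$ forces $J=I$. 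With these repairs your proof is complete.
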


\begin{lemma}[\cite{Kote}]\label{p2} Let $M$ be a differentiable  manifold of dimension  $n^{2}+n$, with $n\geq 2$. Suppose there are  $n$ closed 2-forms, $x_{1},\cdots, x_{n}$ on $M$ satisfying relations (\ref{a9}). Then $\omega=x_{1}\wedge x_{2}^{2}\wedge \cdots \wedge x_{n}^{n}$ vanishes identically. In particular, $\omega$ is not a volume form on $M$.
\end{lemma}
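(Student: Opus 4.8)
My plan is to regard the conclusion as a pointwise statement in exterior algebra and to induct on $n$. Since $\omega$ vanishes identically exactly when $\omega_p=0$ for every $p\in M$, I fix $p$, set $V=T_pM$ (so $\dim V=n^2+n$) and write $\xi_i=(x_i)_p\in\Lambda^2V^*$; closedness of the $x_i$ is irrelevant for this lemma and enters only when it is later combined with Hodge theory. The key observation is that relation (\ref{a9}) for the index $p$ is exactly the vanishing of the complete homogeneous symmetric polynomial $h_{n-p+2}(\xi_{n-p+1},\dots,\xi_n)$ in $\Lambda^*V^*$; the extreme cases are $\xi_n^{\,n+1}=0$ (for $p=1$) and $h_2(\xi_1,\dots,\xi_n)=0$ (for $p=n$). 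Assuming, toward a contradiction, that $\omega_p\neq 0$, I will peel off one variable at a time.

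The reduction step passes to the radical of $\xi_n$. Because $\omega_p\neq0$ forces $\xi_n^{\,n}\neq0$ while $\xi_n^{\,n+1}=0$, the form $\xi_n$ has rank exactly $2n$; let $R\subset V$ be its radical, of dimension $n^2+n-2n=(n-1)^2+(n-1)$, and fix a complementary support $W_n$ of dimension $2n$ with $\xi_n^{\,n}=n!\,\mathrm{vol}(W_n)$. Let $\rho\colon\Lambda^*V^*\to\Lambda^*R^*$ be restriction — a graded algebra homomorphism with $\rho(\xi_n)=0$ — and set $\bar\xi_i=\rho(\xi_i)$. Applying $\rho$ to relation (\ref{a9}) for any $p\geq2$ and using that a complete symmetric polynomial is unchanged when its last variable is set to zero, I obtain $h_{n-p+2}(\bar\xi_{n-p+1},\dots,\bar\xi_{n-1})=0$, which is precisely relation (\ref{a9}) with index $p-1$ for the $n-1$ forms $\bar\xi_1,\dots,\bar\xi_{n-1}$ on $R$; as $p$ runs over $2,\dots,n$ these are all the relations (\ref{a9}) for $n-1$ variables. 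Splitting $V^*=W_n^*\oplus R^*$, every monomial in $\xi_1,\dots,\xi_{n-1}$ that involves a $W_n^*$-direction dies against $\mathrm{vol}(W_n)$, so $\omega_p=n!\,\bar\omega_p\wedge\mathrm{vol}(W_n)$ with $\bar\omega_p=\bar\xi_1\wedge\bar\xi_2^2\wedge\cdots\wedge\bar\xi_{n-1}^{\,n-1}\in\Lambda^{\mathrm{top}}R^*$; hence $\omega_p\neq0$ forces $\bar\omega_p\neq0$. The inductive hypothesis (the lemma for $n-1\geq2$) gives $\bar\omega_p=0$, a contradiction.

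The base case $n=2$ cannot be reduced further and is where a genuine exterior-algebra fact is needed. Here $\dim V=6$, $\xi_2^3=0$, $\xi_1^2+\xi_1\xi_2+\xi_2^2=0$, and I must show $\xi_1\wedge\xi_2^2=0$. Completing the square with $\eta=\xi_1+\tfrac12\xi_2$ gives $\eta^2=-\tfrac34\xi_2^2$. If $\xi_1\wedge\xi_2^2\neq0$ then $\xi_2^2\neq0$, so $\xi_2$ has rank $4$ and $\eta^2$ is a non-zero \emph{decomposable} $4$-form supported on the $4$-dimensional support of $\xi_2$; but a $2$-form on a $6$-space whose square is non-zero and decomposable must have rank $4$ (a rank-$6$ form has a square with full $6$-dimensional support, hence indecomposable), so $\eta^3=0$ and $\xi_1\wedge\xi_2^2=\eta\wedge\xi_2^2=-\tfrac43\eta^3=0$, a contradiction.

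I expect the verification that restriction to the radical of $\xi_n$ converts the relations (\ref{a9}) for $n$ into exactly those for $n-1$ to be the crux: it is what makes the induction close, and it rests on reading (\ref{a9}) as the vanishing of $h_{n-p+2}$ together with the behaviour of $h_d$ under setting a variable to zero. The base case $n=2$ is the secondary difficulty, being the only place where one must invoke rank/decomposability of $2$-forms rather than a formal manipulation of the symmetric relations.
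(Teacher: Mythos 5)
Your proof is correct. Note first that there is nothing inside this paper to compare it with: the lemma is stated with a citation to Kotschick--Terzi\'c \cite{Kote} and no proof is reproduced here, so your argument can only be judged on its own terms and against the original. Its inductive step --- read (\ref{a9}) as the vanishing of $h_{n-p+2}(x_{n-p+1},\dots,x_n)$, use $x_n^{n+1}=0$ together with $x_n^n\neq 0$ to get rank exactly $2n$, restrict to the radical $R$ of $x_n$ (of dimension exactly $(n-1)^2+(n-1)$), and use $h_d(y_1,\dots,y_{k-1},0)=h_d(y_1,\dots,y_{k-1})$ to turn the relations for $n$ forms into precisely the relations for $n-1$ forms, while $\omega_p$ equals $\bar\omega_p\wedge\xi_n^n$ up to a nonzero factor --- is in the same spirit as the original argument of \cite{Kote}, and it is also the same mechanism (kernel distributions of $2$-forms of submaximal rank, contraction or restriction against them) that this paper itself uses in Section \ref{sec-nonf}, e.g.\ for $SO(6)/U(1)\times U(2)$ and for Proposition \ref{familia1}. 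What you add is a genuinely self-contained base case, and you are right that it is unavoidable: the induction cannot reach $n=1$, since restricting the quadratic relation to the $2$-dimensional radical of $x_2$ yields a vacuous identity. Your treatment --- complete the square so that $\eta^2=-\tfrac34\xi_2^2$ is a nonzero decomposable $4$-form, note that a $2$-form on a $6$-space with nonzero decomposable square must have rank $4$ (the square of a rank-$6$ form has trivial radical, whereas a decomposable $4$-form on a $6$-space has a $2$-dimensional one), hence $\eta^3=0$ and $\xi_1\wedge\xi_2^2=\eta\wedge\xi_2^2=-\tfrac43\eta^3=0$ --- is correct. Two points worth making explicit in a write-up: the induction runs over the pointwise linear-algebra statement (2-forms on a vector space of dimension $m^2+m$ satisfying the relations), not over the manifold statement, since $R$ is merely a subspace of $T_pM$ and not a tangent space of anything; and the factorization $\omega_p=n!\,\bar\omega_p\wedge\mathrm{vol}(W_n)$ silently uses that $\xi_n$ has no mixed $W_n^*\wedge R^*$ component, which holds exactly because $R$ is the radical of $\xi_n$. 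Your observation that closedness of the $x_i$ plays no role in the lemma itself, and only enters when the lemma is applied to harmonic representatives, is also accurate.
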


\begin{proposition}[\cite{Kote}]\label{rel3}
The classes represented by 
$$
x_{1}^{\alpha_{1}}x_{2}^{\alpha_{2}}\cdots x_{n}^{\alpha_{n}},\ \ \ 0\leq\alpha_{i}\leq 2i-1, \ \ 1\leq i\leq n,
$$
form a vector space basis for the cohomology ring of $Spin(2n+1)/T^{n}$ and $Sp(n)/T^{n}$. The multiplicative relations between $x_{1},\cdots,x_{n}$ are given by 
\begin{equation}\label{rel}
\sum_{i_{1}+\cdots+i_{p}=n-p+1}x_{n-p+1}^{2{i_{p}}}x_{n-p+2}^{2{i_{p-1}}}\cdots x_{n-1}^{2{i_{2}}}x_{n}^{2{i_{1}}}=0,\ \ 1\leq p\leq n.
\end{equation}
\end{proposition}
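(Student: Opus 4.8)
The plan is to realize both manifolds as coinvariant algebras of a single Weyl group and then match the resulting Borel presentation with the stated relations. Throughout I work with rational coefficients, which is the relevant setting for harmonic forms (and is forced for $Spin(2n+1)/T^{n}$, whose integral cohomology has $2$-torsion). The starting point is Borel's theorem: for a compact connected Lie group $G$ with maximal torus $T$ and Weyl group $W$,
$$H^{*}(G/T;\mathbb{Q})\;\cong\;S(\mathfrak{t}^{*})\big/\big(S(\mathfrak{t}^{*})^{W}_{+}\big),$$
where $S(\mathfrak{t}^{*})=\mathbb{Q}[x_{1},\dots,x_{n}]$ is the polynomial algebra on the dual of the Lie algebra of $T$, each $x_{i}$ placed in cohomological degree $2$, and the ideal is generated by the positive-degree $W$-invariants. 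The point that lets the two cases be treated together is that $Spin(2n+1)$ and $Sp(n)$ have the dual root systems $B_{n}$ and $C_{n}$, hence the same Weyl group $W$ --- the hyperoctahedral group of signed permutations, of order $2^{n}\,n!$ --- acting on $\mathfrak{t}^{*}\cong\mathbb{R}^{n}$ by permuting the coordinates $x_{1},\dots,x_{n}$ and changing their signs. Since the rational coinvariant algebra depends only on this reflection representation, the two flag manifolds have isomorphic rational cohomology rings.

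Next I would compute the invariants. By the classical invariant theory of the signed-permutation group, $S(\mathfrak{t}^{*})^{W}$ is the polynomial algebra generated by the elementary symmetric functions $e_{p}(x_{1}^{2},\dots,x_{n}^{2})$, $p=1,\dots,n$, in the squared variables (of degrees $2,4,\dots,2n$). Hence
$$H^{*}(G/T;\mathbb{Q})\;\cong\;\mathbb{Q}[x_{1},\dots,x_{n}]\big/I,\qquad I=\big(e_{1}(x^{2}),\dots,e_{n}(x^{2})\big).$$
It remains to reconcile this symmetric presentation with the asymmetric ``staircase'' relations of the statement. Reading the left-hand side of (\ref{rel}) for a fixed $p$ and writing $q=n-p+1$, one checks directly that it is the complete homogeneous symmetric polynomial $h_{q}(x_{q}^{2},x_{q+1}^{2},\dots,x_{n}^{2})$; thus the claimed relations are exactly $h_{q}(x_{q}^{2},\dots,x_{n}^{2})=0$ for $q=1,\dots,n$. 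Let $J$ be the ideal they generate.

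To see $J\subseteq I$ I would use the generating-function identity relating complete and elementary symmetric functions. Since $e_{1}(x^{2})=\dots=e_{n}(x^{2})=0$ in $\mathbb{Q}[x]/I$, every $h_{k}(x_{1}^{2},\dots,x_{n}^{2})$ vanishes there; factoring the full generating series $\prod_{i=1}^{n}(1-x_{i}^{2}t)^{-1}$ through its first $q-1$ factors then shows that $h_{q}(x_{q}^{2},\dots,x_{n}^{2})$ agrees modulo $I$ with the degree-$q$ coefficient of $\prod_{i=1}^{q-1}(1-x_{i}^{2}t)$, which is $0$ because that polynomial has degree $q-1$ in $t$. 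Hence every generator of $J$ lies in $I$.

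For the reverse inclusion I would count dimensions rather than invert the identity. Under the lexicographic order with $x_{1}>\dots>x_{n}$, the leading term of $h_{q}(x_{q}^{2},\dots,x_{n}^{2})$ is the pure power $x_{q}^{2q}$; as the leading terms $x_{1}^{2},x_{2}^{4},\dots,x_{n}^{2n}$ are pairwise coprime, Buchberger's coprimality criterion shows the $n$ relations form a Gr\"obner basis of $J$. Consequently the standard monomials (those divisible by no $x_{q}^{2q}$), namely
$$x_{1}^{\alpha_{1}}x_{2}^{\alpha_{2}}\cdots x_{n}^{\alpha_{n}},\qquad 0\le\alpha_{i}\le 2i-1,$$
form a vector-space basis of $\mathbb{Q}[x]/J$, so $\dim_{\mathbb{Q}}\mathbb{Q}[x]/J=\prod_{i=1}^{n}(2i)=2^{n}\,n!$. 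Since $G/T$ has its cohomology in even degrees, $\dim_{\mathbb{Q}}H^{*}(G/T;\mathbb{Q})=\chi(G/T)=|W|=2^{n}\,n!=\dim_{\mathbb{Q}}\mathbb{Q}[x]/I$; the surjection $\mathbb{Q}[x]/J\to\mathbb{Q}[x]/I$ induced by $J\subseteq I$ is therefore an isomorphism, giving $J=I$. This identifies $H^{*}(G/T;\mathbb{Q})$ with $\mathbb{Q}[x]/J$ and proves both the relations (\ref{rel}) and the monomial basis. I expect the principal obstacle to be exactly this reconciliation: showing that the nested, partial-variable relations of (\ref{rel}) cut out the same ideal as the symmetric Borel generators, for which the containment $J\subseteq I$ combined with the Gr\"obner-basis dimension count is the most economical route.
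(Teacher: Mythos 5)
Your proof is correct. Note first that the paper itself contains no proof of this proposition: it is imported verbatim from Kotschick--Terzi\'c \cite{Kote} (the paper only cites it and then uses the relations (\ref{rel}) as input for the non-formality arguments), so there is no internal argument to compare yours against. What you have written is a complete, self-contained derivation along the lines that underlie the cited result. The key identifications are all sound: $Spin(2n+1)$ and $Sp(n)$ have dual root systems $B_n$ and $C_n$, hence the same hyperoctahedral Weyl group, so Borel's theorem presents both rational cohomology rings as $\mathbb{Q}[x_1,\dots,x_n]/I$ with $I$ generated by the $e_p(x_1^2,\dots,x_n^2)$; the left-hand side of (\ref{rel}) for fixed $p$ is indeed the complete homogeneous symmetric polynomial $h_q(x_q^2,\dots,x_n^2)$ with $q=n-p+1$; your generating-function factorization correctly gives $J\subseteq I$, since modulo $I$ that class equals the $t^q$-coefficient of $\prod_{i=1}^{q-1}(1-x_i^2t)$, which vanishes for degree reasons; and the pairwise coprimality of the lex leading terms $x_q^{2q}$ makes your relations a Gr\"obner basis of $J$, so $\dim_{\mathbb{Q}}\mathbb{Q}[x]/J=\prod_{q=1}^{n}(2q)=2^n\,n!$, which equals $\chi(G/T)=|W|=\dim_{\mathbb{Q}}\mathbb{Q}[x]/I$, forcing $J=I$. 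This simultaneously establishes the relations and the monomial basis, and the Gr\"obner-basis dimension count is an efficient way to handle the one genuinely delicate point, namely that the asymmetric staircase relations cut out the same ideal as the symmetric Borel generators. The only caveat worth recording is the one you already flagged: the statement must be read with real or rational coefficients (which is how the paper uses it, for harmonic forms), since $Spin(2n+1)/T^n$ has $2$-torsion in integral cohomology.
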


\begin{lemma}[\cite{Kote}]\label{rel1}
Let $M$ be differentiable manifold of dimension $2n^{2}$, with $n\geq2$. Suppose that there are $n$ closed $2$-forms $x_{1},\cdots,x_{n}$ on $M$ satisfying the relations (\ref{rel}). Then $\omega=x_{1}\wedge x_{2}^{3}\wedge\cdots\wedge x_{n}^{2n-1}$ vanishes identically. In particular, $\omega$ is not a volume form on $M$.
\end{lemma}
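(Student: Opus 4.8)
The plan is to reduce the statement to a pointwise assertion in the exterior algebra of the cotangent space and then argue by induction on $n$. Since the relations (\ref{rel}) and the conclusion $\omega=0$ are pointwise identities among even forms (which commute under $\wedge$), closedness plays no role, and it suffices to fix $x\in M$, set $V=T_x^*M$ with $\dim V=2n^2$, and show that any $x_1,\dots,x_n\in\Lambda^2V$ satisfying (\ref{rel}) have $\omega=x_1\wedge x_2^3\wedge\cdots\wedge x_n^{2n-1}=0$. I stress that a purely algebraic (ideal-membership) argument cannot succeed: the same monomial is a nonzero generator of the top degree of $H^*(Sp(n)/T^n)$, so the vanishing must exploit genuine rank information about $2$-forms, not merely the relations read as polynomial identities.

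For the inductive step I would peel off the last form $x_n$. The relation (\ref{rel}) with $p=1$ reads $x_n^{2n}=0$, which forces $\rk(x_n)\le 4n-2$; if $\rk(x_n)<4n-2$ then $x_n^{2n-1}=0$ and $\omega=0$ trivially, so I may assume $\rk(x_n)=4n-2$. Writing $V=W\oplus K$ with $K=\ker x_n$, one computes $\dim K=2n^2-(4n-2)=2(n-1)^2$; moreover $x_n\in\Lambda^2W$ and is nondegenerate on $W$, so $x_n^{2n-1}$ spans $\Lambda^{\mathrm{top}}W$. Hence wedging with $x_n^{2n-1}$ annihilates every form with a nonzero $W$-component, and only the pure-$K$ part of $\prod_{j<n}x_j^{2j-1}$ survives, giving $\omega=\big(\prod_{j<n}\bar x_j^{\,2j-1}\big)\wedge x_n^{2n-1}$, where $\bar x_j:=x_j|_K$.

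The mechanism that closes the induction is that the remaining relations restrict correctly. Pulling (\ref{rel}) back along $K\hookrightarrow V$ and using $\bar x_n=0$, each complete-homogeneous relation $h_e(x_e^2,\dots,x_n^2)=0$ becomes $h_e(\bar x_e^{\,2},\dots,\bar x_{n-1}^{\,2})=0$; these are exactly the relations (\ref{rel}) for the index $n-1$ on the $2(n-1)^2$-dimensional space $K$. By the inductive hypothesis $\prod_{j<n}\bar x_j^{\,2j-1}=0$, and therefore $\omega=0$. This is the $Sp(n),\,Spin(2n+1)$ analogue of the reduction behind Lemma \ref{p2}.

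The main obstacle is the base case $n=2$, which cannot be reduced further (the induction would bottom out at a vacuous one-variable relation), so it must be treated directly from $x_1^2+x_2^2=0$ and $x_2^4=0$ on an $8$-dimensional $V$. Here I expect to argue as follows: if $\rk(x_2)=6$ with kernel $K$, then $-x_2^2$ is an \emph{indecomposable} $4$-form on $W$, so the equation $(x_1|_W)^2=-x_2^2$ forces $x_1|_W$ to be nondegenerate; a Lefschetz-type injectivity $\Lambda^1 W\to\Lambda^3 W$ then kills the mixed part of $x_1$, and the surviving bidegree-$(2,2)$ component of $x_1^2+x_2^2=0$ forces the $\Lambda^2K$-part of $x_1$ to vanish, i.e.\ $K=\ker x_2\subseteq\ker x_1$, whence $x_1\wedge x_2^3=0$. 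This base step is precisely where the reality of the forms (a vanishing sum of squares) is genuinely used, and it is the only place where the argument goes beyond formal manipulation of (\ref{rel}).
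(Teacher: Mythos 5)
You should first know that the paper contains no proof of Lemma \ref{rel1} to compare against: it is quoted verbatim from Kotschick--Terzi\'c \cite{Kote} and used as a black box in Theorem \ref{teoprinc}. So your argument can only be measured against the cited source and against the analogous arguments this paper does write out (the contraction-with-kernel-vectors proofs for $SO(6)/U(1)\times U(2)$ and for $\mathbb{F}(n+3;n,1,1,1)$).

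With that said, your proof is correct. The pointwise reduction is legitimate, since both (\ref{rel}) and the conclusion are fiberwise identities, and your preliminary observation that no ideal-membership manipulation can succeed is exactly right: by Proposition \ref{rel3} the monomial $x_1x_2^3\cdots x_n^{2n-1}$ is a \emph{basis element} of the top cohomology of $Sp(n)/T^n$, so rank information is indispensable. Your inductive step is sound: $x_n^{2n}=0$ gives $\rk(x_n)\le 4n-2$; in the nontrivial case $K=\ker x_n$ has dimension exactly $2(n-1)^2$, restriction to $K$ is an algebra homomorphism annihilating $x_n$, so each relation $h_e(x_e^2,\dots,x_n^2)=0$ with $e\le n-1$ restricts to the index-$(n-1)$ system, while wedging with $x_n^{2n-1}\in\Lambda^{4n-2}W^*$ kills every component of $x_1\wedge x_2^3\wedge\cdots\wedge x_{n-1}^{2n-3}$ except its pure-$\Lambda K^*$ part, which vanishes by induction. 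This kernel-restriction induction is essentially the mechanism of \cite{Kote}, and it is the same mechanism (in contracted form, via $i_v$ with kernel vectors) driving the paper's own non-formality proofs; you acknowledge as much. What is genuinely yours is the base case $n=2$, and it checks out: the $(4,0)$-component of $x_1^2+x_2^2=0$ forces $a:=x_1|_W$ to be nondegenerate, because a $2$-form of rank $\le 4$ on the $6$-dimensional $W$ has square either zero or decomposable, whereas $x_2^2$ is nonzero with trivial annihilator (if $w\wedge x_2^2=0$ then $w=0$ by hard Lefschetz for the nondegenerate $x_2$), hence indecomposable; the $(3,1)$-component $a\wedge b=0$ kills the mixed part $b$ by injectivity of $\Lambda^1W^*\to\Lambda^3W^*$, $w\mapsto a\wedge w$; and then the $(2,2)$-component reduces to $2a\wedge c=0$, killing $c\in\Lambda^2K^*$, so that $\ker x_2\subseteq\ker x_1$ and $x_1\wedge x_2^3\in\Lambda^{8}W^*=0$. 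Two quibbles only: the indecomposability of $x_2^2$ is the crux of the base case and should be proved (the one-line annihilator argument above), not merely asserted; and your closing remark that this step ``genuinely uses the reality of the forms'' is inaccurate --- nothing in your argument depends on the ground field being $\mathbb{R}$, and the statement holds by the same proof over any field of characteristic zero.
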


We now proof the main theorem of this section.

\begin{theorem}\label{teoprinc}
The full flag manifolds $F_{4}/T$, $E_{6}/T$, $E_{7}/T$ and $E_{8}/T$ are not geometrically formal, where $T$ represents the maximal torus for each corresponding Lie group.
\end{theorem}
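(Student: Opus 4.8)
The plan is to exploit the same cohomological obstruction that Kotschick-Terzi\'c used for the classical groups, namely to produce a top-degree wedge of harmonic $2$-forms that is forced to vanish, thereby contradicting the fact that on a geometrically formal manifold the harmonic forms form a subalgebra closed under wedge (in particular a product of harmonic forms filling the top degree would have to be a harmonic volume form, hence non-vanishing). For each of $F_4, E_6, E_7, E_8$ I would choose a rank-many collection of degree-$2$ cohomology classes and appeal to Lemmas \ref{p2} and \ref{rel1}, or to their natural analogue for the exceptional cases, to exhibit a monomial of the correct total degree that is identically zero while simultaneously being a candidate volume element. Since each full flag $G/T$ has $\dim = |\Pi| = 2|\Pi^+|$ and $\dim H^2(G/T) = \rk(G)$, the numerics must be checked case by case: $\dim F_4/T = 48$, $\dim E_6/T = 72$, $\dim E_7/T = 126$, $\dim E_8/T = 240$.

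First I would set up, for each exceptional $G$, a second cohomology basis $x_1,\dots,x_\ell$ (with $\ell = \rk G$) coming from the line bundles / fundamental weights, and record the multiplicative relations in $H^*(G/T)$. The key structural input is Lemma \ref{rel2}: by embedding $T \subset K \subset G$ with $K$ a suitable centralizer of a torus so that $G/T \to G/K$ is a $k$-symmetric fibration with equal ranks, the restriction $H^*(G/T) \to H^*(K/T)$ is surjective. This lets me pull back the already-known vanishing relations on a classical fiber $K/T$ (for instance an $SU(n)/T$ or $Sp(n)/T$ sitting inside the exceptional flag) and thereby locate, inside $H^*(G/T)$, a product of harmonic $2$-forms that dies for the same reason it dies on the subflag. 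In other words, I would reduce the exceptional cases to the classical ones already handled in \cite{Kote} via the fibration and the surjectivity of restriction to the fiber.

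The argument then runs by contradiction in each case. Assume $G/T$ admits a formal metric $g$; then the harmonic $2$-forms $x_1,\dots,x_\ell$ generate a subalgebra of harmonic forms, so every monomial $x_1^{a_1}\cdots x_\ell^{a_\ell}$ of top degree is harmonic and, being a nonzero cohomology class times the structure, should represent a harmonic volume form, hence be nowhere vanishing. But restricting to the appropriate classical subflag via Lemma \ref{rel2} and applying Lemma \ref{p2} (in the $SU$-type case) or Lemma \ref{rel1} (in the $Sp$/$Spin$-type case) forces that very monomial to vanish identically, contradicting the non-degeneracy of a volume form. Combining the four cases with the classical results recalled at the start of the section yields the stated theorem.

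The hard part will be the bookkeeping: identifying, for each of $F_4, E_6, E_7, E_8$, an explicit rank-equal subgroup $K$ whose flag $K/T$ is of classical type, verifying that the chosen top-degree monomial actually restricts nontrivially along the fibration to a monomial satisfying the hypotheses of Lemma \ref{p2} or \ref{rel1}, and checking the degree count so that the monomial genuinely lands in the top cohomology $H^{\dim G/T}(G/T)$. I expect the main obstacle to be confirming that the restriction map sends the candidate volume monomial to the distinguished vanishing product $x_1 \wedge x_2^2 \wedge \cdots$ (respectively $x_1 \wedge x_2^3 \wedge \cdots$) on the subflag, rather than to zero for trivial reasons or to a non-vanishing class; this is where the surjectivity in Lemma \ref{rel2} and a careful choice of the embedded root subsystem must be combined precisely.
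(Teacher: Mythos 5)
Your proposal is essentially the paper's own proof in outline: the paper runs exactly this contradiction argument, choosing for each exceptional group a rank-equal $k$-symmetric fibration --- $Spin(9)/T\to F_4/T\to F_4/Spin(9)$, and $SU(n)/T\to E_n/T\to E_n/\bigl(SU(n)\times U(1)\bigr)$ for $n=6,7,8$ --- then using Lemma \ref{rel2} for surjectivity of restriction to the fiber, transferring the relations of Proposition \ref{rel3} (resp.\ Proposition \ref{p1}) to harmonic representatives, and invoking Lemma \ref{rel1} (for $F_4$) or Lemma \ref{p2} (for $E_6,E_7,E_8$) to force the fiber's top-degree monomial to vanish identically, contradicting the surjectivity. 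The case-by-case data you defer as ``bookkeeping'' --- the explicit classical subgroups and the degree counts --- is precisely what the paper supplies, so your plan matches its argument step for step.
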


\begin{proof}
We will proof the Theorem analyzing case by case.
\begin{enumerate}

\item In this way, we start proving that the flag manifold $F_{4}/T$ is not geometrically formal.

Let us consider the following fibration
$$
Spin(9)/T\longrightarrow F_{4}/T \longrightarrow F_{4}/Spin(9).
$$
According to the Leray-Hirsch's Theorem we have 
$$
H^{*}(F_{4}/T,\mathbb{R})\cong H^{*}(Spin(9)/T,\mathbb{R})\otimes H^{*}(F_{4}/Spin(9),\mathbb{R}).
$$
Since  $F_{4}/T$ and $Spin(9)/T$ are $k$-symmetric spaces, Lemma \ref{rel2} implies that the restriction to the fiber is surjective in real cohomology.

Suppose that $F_{4}/T$ is geometrically formal.

We now use the basis $x_{1},x_{2},x_{3},x_{4}$ for the cohomology of $Spin(9)/T$ given in the Proposition \ref{rel3}. Abusing of the notation we denote by  $x_{1},x_{2},x_{3},x_{4}$ the harmonic representatives of the cohomology classes $x_{1},x_{2},x_{3},x_{4}$ on $F_4/T$ with respect to a formal metric. Therefore the relations (\ref{rel}) hold for these harmonic forms. 


If we restrict these forms to the fiber, using Lemma \ref{rel1} we have that $x_{1}\wedge x_{2}^{3}\wedge x_{3}^{5}\wedge x_{4}^{7}$ vanishes identically and this contradicts the Lemma \ref{rel2}.

\item The flag manifold $E_{6}/T$ is not geometrically formal.

We consider the following fibration
$$
SU(6)/T^{5}\longrightarrow E_{6}/T^{6} \longrightarrow E_{6}/SU(6)\times U(1).
$$

Using Leray-Hirsch's Theorem we have 
$$
H^{*}(E_{6}/T,\mathbb{R})\cong H^{*}(SU(6)/T,\mathbb{R})\otimes H^{*}(E_{6}/SU(6)\times U(1),\mathbb{R}).
$$
Since $E_{8}/T$ and $E_{6}/SU(6)\times U(1)$ are $k$-symmetric spaces,  Lemma \ref{rel2} implies that the restriction to the fiber is surjective in real cohomology. 

Suppose that $E_{6}/T$ is geometrically formal.

We use the basis $x_{1},\cdots,x_{5}$ for the cohomology of $SU(6)/T$ given in the Proposition \ref{p1}. Abusing of the notation we denote by $x_{1},\cdots,x_{5}$ the harmonic representatives of the cohomology classes $x_{1},\cdots,x_{5}$ on $E_6/T$ with respect to a formal metric. Therefore the relations (\ref{a9}) hold for these harmonic forms. 

If we restrict these forms to the fiber $SU(6)/T$, Lemma \ref{p2} implies that the form $x_{1}\wedge x_{2}^{2}\wedge x_{3}^{3}\wedge x_{4}^4\wedge x_{5}^{5}$ vanishes identically and this contradicts the Lemma \ref{rel2}.

\item The flag manifold $E_{7}/T$ is not geometrically formal.

Let us consider the following fibration
$$
{SU(7)}/{T^{6}}\longrightarrow {E_{7}}/{T^{7}} \longrightarrow {E_{7}}/{SU(7)\times U(1)}.
$$
By Leray-Hirsch's Theorem we have 
$$
H^{*}(E_{7}/T^{7},\mathbb{R})\cong H^{*}(SU(7)/T^6,\mathbb{R})\otimes H^{*}(E_{7}/SU(7)\times U(1),\mathbb{R}).
$$
Since $E_{7}/T^{7}$ and $E_{7}/SU(7)\times U(1)$ are $k$-symmetric spaces Lemma \ref{rel2} implies that the restriction to the fiber is surjective in real cohomology.

Suppose that $E_{7}/T$ is geometrically formal.

We use the basis $x_{1},\cdots,x_{6}$ for the cohomology of $SU(7)/T$ given in the Proposition \ref{p1}. Abusing of the notation we denote by  $x_{1},\cdots,x_{6}$ the harmonic representatives of the cohomology classes $x_{1},\cdots,x_{6}$ on $E_7/T$ with respect to a formal metric. Therefore the relations (\ref{a9}) hold for this harmonic forms.  

If we restrict these forms to the fiber $SU(7)/T$, Lemma \ref{p2} implies that the form $x_{1}\wedge x_{2}^{2}\wedge x_{3}^{3}\wedge\cdots \wedge x_{6}^{6}$ vanishes identically and this fact contradicts the Lemma \ref{rel2}.

\item The flag manifold $E_{8}/T$ is not geometrically formal.

Let us consider the following fibration
$$
{SU(8)}/{T^{7}}\longrightarrow {E_{8}}/{T^{8}} \longrightarrow {E_{8}}/{SU(8)\times U(1)}.
$$
By Leray-Hirsch's Theorem we have 
$$
H^{*}(E_{8}/T,\mathbb{R})\cong H^{*}(SU(8)/T,\mathbb{R})\otimes H^{*}(E_{8}/SU(8)\times U(1),\mathbb{R}).
$$
Since $E_{8}/T$ and $E_{8}/SU(8)\times U(1)$ are $k$-symmetric spaces Lemma \ref{rel2} implies that the restriction to the fiber is surjective in real cohomology.

Suppose that $E_{8}/T$ is geometrically formal.

We use the basis $x_{1},x_{2},\cdots,x_{7}$ for the cohomology of $SU(8)/T$  given in the Proposition \ref{p1}. Abusing of the notation we denote by $x_{1},x_{2},\cdots,x_{7}$ the harmonic representatives of the cohomology classes $x_{1},\cdots,x_{7}$ on $E_8/T$  with respect to a formal metric. Therefore the relations (\ref{a9}) hold for these harmonic forms.  

If we restrict these forms to the fiber $SU(8)/T$, Lemma \ref{p2} implies that the form  $x_{1}\wedge x_{2}^{2}\wedge x_{3}^{3}\wedge\cdots \wedge x_{7}^{7}$ vanish identically and this fact contradicts the Lemma \ref{rel2}.  
\end{enumerate}
\end{proof}

In \cite{Kote}, Kotschick and Terzi\'c proved that full flag manifolds of classical Lie groups and the full flag manifold of the exceptional Lie group $G_2$ are not geometrically formal. Therefore, together with the results of Kotschick and Terzi\'c we have completed the list of full flag manifolds of compact simple Lie groups. We summarize this fact in the next Corollary: 

\begin{corollary}
Let $G$ be a connected, compact, simple Lie group, $T$ a maximal torus in $G$. The correspondent full flag manifold $G/T$ is not geometrically formal.  
\end{corollary}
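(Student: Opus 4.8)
The plan is to argue directly from the Cartan--Killing classification of compact simple Lie groups, reducing the statement to the cases that are already settled. First I would observe that geometric formality of a full flag manifold $G/T$ depends only on the isomorphism type of the Lie algebra $\mathfrak{g}=\mathrm{Lie}(G)$, and not on the particular group $G$ realizing it. Indeed, if $\tilde{G}$ is the universal cover of $G$ with maximal torus $\tilde{T}$, then the covering projection $\tilde{G}\to G$ induces a diffeomorphism $\tilde{G}/\tilde{T}\cong G/T$, since both are the full flag manifold attached to the root system of $\mathfrak{g}$. Geometric formality is a diffeomorphism invariant, being a statement about the de Rham algebra together with the space of Riemannian metrics, so it suffices to verify non-formality once for each type in the classification list.

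Next I would invoke the classification itself: every connected compact simple Lie group has Lie algebra of one of the classical types $A_\ell$ $(\ell\geq 1)$, $B_\ell$ $(\ell\geq 2)$, $C_\ell$ $(\ell\geq 3)$, $D_\ell$ $(\ell\geq 4)$, or one of the five exceptional types $G_{2},F_{4},E_{6},E_{7},E_{8}$. The classical types are realized by $SU(\ell+1)$, $SO(2\ell+1)$ (equivalently $Spin(2\ell+1)$), $Sp(\ell)$ and $SO(2\ell)$ (equivalently $Spin(2\ell)$), whose full flag manifolds were shown to be non-geometrically-formal by Kotschick and Terzi\'c in \cite{Kote}; the same reference also disposes of the type $G_{2}$.

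Finally, the four remaining exceptional types $F_{4},E_{6},E_{7},E_{8}$ are precisely the content of Theorem \ref{teoprinc} proved above, which establishes that $F_{4}/T$, $E_{6}/T$, $E_{7}/T$ and $E_{8}/T$ are not geometrically formal. Combining these two inputs exhausts the Cartan--Killing list, and together with the reduction of the first paragraph this shows that no connected compact simple Lie group $G$ admits a geometrically formal full flag manifold $G/T$, which is the assertion.

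Since every step is either a citation or an application of the theorem proved in the previous section, there is no genuine analytic obstacle in this argument; the only point demanding care is the reduction in the first paragraph. One must confirm that the non-formality verified for a single representative in each type---often stated in \cite{Kote} and in Theorem \ref{teoprinc} for the simply connected group or for a $Spin$ cover---transfers to every isogenous group, so that the quantifier ``every connected compact simple $G$'' in the statement is genuinely covered rather than only the specific groups named in the two sources.
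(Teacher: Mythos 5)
Your proposal is correct and takes essentially the same route as the paper: the corollary is obtained by combining the Kotschick--Terzi\'c results for the classical groups and $G_2$ with Theorem \ref{teoprinc} for $F_{4}$, $E_{6}$, $E_{7}$, $E_{8}$, thereby exhausting the Cartan--Killing classification. Your opening reduction---that $G/T$ depends only on $\mathfrak{g}$ because the universal covering induces a diffeomorphism $\tilde{G}/\tilde{T}\cong G/T$, so non-formality transfers across the isogeny class---is a worthwhile precision that the paper leaves implicit, but it does not alter the substance of the argument.
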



\section{Non geometric formality on other homogeneous space}\label{sec-nonf}
In this section we proof the non-geometric formality for several homogeneous space, including homogeneous space of exceptional Lie groups.

\begin{proposition}
The generalized flag manifold $\mathbb{F}_{D}(3;1,2)=SO(6)/U(2)\times U(1)$ is not geometrically formal.
\end{proposition}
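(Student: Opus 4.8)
The plan is to mirror the fibration argument of Theorem \ref{teoprinc}: realize $\mathbb{F}_{D}(3;1,2)$ as the total space of a homogeneous fibration over a $k$-symmetric base whose fiber is again $k$-symmetric, split the cohomology by Leray--Hirsch, invoke Lemma \ref{rel2} to guarantee that the restriction to the fiber is surjective on real cohomology, and finally assume geometric formality in order to promote the multiplicative relations of $H^{*}(\mathbb{F}_{D}(3;1,2),\mathbb{R})$ to pointwise identities among harmonic representatives. A contradiction would then follow by exhibiting a characteristic top-degree product of harmonic forms that is cohomologically essential on a fiber yet is forced to vanish identically by those relations.

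First I would record the isomorphism $SO(6)/(U(2)\times U(1))\cong SU(4)/S(U(2)\times U(1)\times U(1))$ arising from $Spin(6)=SU(4)$ (the vector representation of $SO(6)$ being $\Lambda^{2}$ of the standard $SU(4)$-module), which also identifies the space as the $n=2$ member of the family $SU(n+2)/S(U(n)\times U(1)\times U(1))$ treated in \cite{princ}. Under this identification the isotropy representation splits into three irreducible summands, of complex dimensions $2,2,1$, so $b_{2}=2$, and a Borel-type computation gives $H^{*}(\mathbb{F}_{D}(3;1,2),\mathbb{R})=\mathbb{R}[x_{3},x_{4}]/(R_{3},R_{4})$ with $R_{3}=x_{3}^{3}+x_{3}^{2}x_{4}+x_{3}x_{4}^{2}+x_{4}^{3}$ and $R_{4}=x_{3}^{3}x_{4}+x_{3}^{2}x_{4}^{2}+x_{3}x_{4}^{3}$. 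The natural candidate fibrations are $\mathbb{C}P^{2}\to\mathbb{F}_{D}(3;1,2)\to\mathbb{C}P^{3}$, with base $SO(6)/U(3)$, and $\mathbb{C}P^{1}\to\mathbb{F}_{D}(3;1,2)\to Q^{4}$, with base $SO(6)/(SO(4)\times SO(2))$; in both cases base and fiber are $k$-symmetric by Proposition \ref{flagksim}, so Lemma \ref{rel2} applies and restriction to the fiber is onto in cohomology.

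The heart of the argument, and the step I expect to be the main obstacle, is the analogue of Lemmas \ref{p2} and \ref{rel1}: one must locate a product of the harmonic $2$-forms $x_{3},x_{4}$ that restricts to a generator of the top cohomology of a fiber and is simultaneously annihilated by $R_{3},R_{4}$. This is genuinely more delicate than in the full-flag case, because here the fibers are projective spaces, which are themselves geometrically formal; indeed, from $R_{3},R_{4}$ one only extracts $x_{3}^{4}=x_{4}^{4}=0$ and $x_{3}^{3}x_{4}^{2}=-x_{3}^{2}x_{4}^{3}$ as form identities, while the fundamental class $x_{3}^{3}x_{4}^{2}$ survives, so the naive top-monomial of Kotschick--Terzi\'c does not vanish. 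The obstruction must therefore be drawn from the finer interaction encoded in $(R_{3},R_{4})$ --- equivalently, from a sub-configuration that fails to be geometrically formal --- and I would aim to prove a tailored vanishing lemma, valid for arbitrary closed $2$-forms satisfying $(R_{3},R_{4})$, isolating the product that both carries the fundamental class of the relevant fiber and is killed by the relations, thereby contradicting the surjectivity provided by Lemma \ref{rel2}.
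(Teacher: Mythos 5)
Your proposal is not yet a proof: the step you yourself single out as ``the main obstacle'' --- a vanishing lemma adapted to the relations $(R_{3},R_{4})$ --- is precisely the content of the proposition, and it is left unproven. Moreover, with the fibrations you chose it cannot be proven, for two structural reasons. First, your fibers $\mathbb{C}P^{2}$ and $\mathbb{C}P^{1}$ have real dimension $4$ and $2$, while every relation in $H^{*}(\mathbb{F}_{D}(3;1,2),\mathbb{R})$ lives in degree $\geq 6$; a form of degree $\geq 6$ restricts to zero on such fibers for trivial dimension reasons, so the relations carry no pointwise information whatsoever on these fibers, and restriction can never detect the obstruction. Second, the contraction mechanism that powers Theorem \ref{teoprinc} and Proposition \ref{familia1} needs kernel distributions of the degenerate harmonic $2$-forms whose rank exceeds the codimension of the fiber, so as to produce kernel vectors tangent to the fiber; here the relations only give $x_{3}^{4}=x_{4}^{4}=0$ pointwise, hence kernels of rank at least $4$, against fiber codimensions $6$ and $8$. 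So your correct observation that the projective-space fibers are geometrically formal is not a difficulty to be circumvented within this framework --- it is the symptom showing the framework is the wrong one for this manifold.

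The paper's proof abandons fibrations, Leray--Hirsch and Lemma \ref{rel2} entirely and runs the kernel-contraction argument on the manifold itself. Writing $H^{*}(\mathbb{F}_{D}(3;1,2),\mathbb{R})=\mathbb{R}[x,y,z]/\langle x^{2}+y^{2}+z^{2},\,x^{4}+y^{4}+z^{4},\,xyz\rangle$, a Gr\"obner basis of the ideal contains $b_{3}=y^{4}+y^{2}z^{2}+z^{4}$, $b_{4}=y^{3}z+yz^{3}$ and $b_{5}=z^{5}$ (whence also $y^{5}=0$), and the top class in $H^{10}$ is generated by $y^{2}z^{3}-yz^{4}=y^{2}z^{3}+y^{3}z^{2}$. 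Assuming a formal metric, these relations hold pointwise for the harmonic representatives, and $y^{5}=z^{5}=0$ forces the kernel distributions $N_{y}$, $N_{z}$ to have rank at least $2$, so one may choose locally linearly independent fields $v\in N_{y}$, $w\in N_{z}$. Contracting $b_{3}=0$ with $v$ and $w$ yields the identity $(i_{w}y)\wedge y\wedge(i_{v}z)\wedge z=0$, and contracting the top-degree form $y^{2}z^{3}+y^{3}z^{2}$ with $v,w$ and using this identity gives $i_{w}i_{v}\bigl(y^{2}z^{3}+y^{3}z^{2}\bigr)=0$, which is impossible because a nowhere-vanishing $10$-form contracted with two linearly independent vectors cannot vanish. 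This pointwise argument on the total space is exactly the ``tailored vanishing lemma'' your outline calls for. Finally, note that your identification $SO(6)/U(2)\times U(1)\cong SU(4)/S(U(2)\times U(1)\times U(1))$ would by itself yield a complete proof if you simply invoked the result of \cite{princ} for the family $\mathbb{F}(n+2;n,1,1)$ at $n=2$; but as written your argument uses this identification only as background and does not rest on that citation.
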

\begin{proof}
The cohomology ring of $\mathbb{F}_{D}(3;1,2)$ is given by
$$
H^{*}(\mathbb{F}_{D}(3;1,2),\mathbb{R})=\dfrac{\mathbb{R}[x,y,z]}{\langle s_{1},s_{2},e_{3}\rangle},
$$
where $s_{1}=x^2+y^2+z^2$, $s_{2}=x^4+y^4+z^4$ and $e_{3}=xyz$.

A Gr\"obner basis for the ideal $\langle s_{1},s_{2},e_{3}\rangle$ is
$$
\begin{array}{lccccccccl}
b_{1}=x^2+y^2+z^2,&&&&&&&&&b_{4}=y^3z+yz^3,\\
b_{2}=xyz,&&&&&&&&&b_{5}=z^5.\\
b_{3}=y^4+y^2z^2+z^4,
\end{array}
$$
Therefore we can consider the cohomology ring represented in the following way
$$
H^{*}(\mathbb{F}_{D}(3;1,2),\mathbb{R})=\dfrac{\mathbb{R}[x,y,z]}{\langle b_{1},b_{2},b_{3},b_{4},b_{5}\rangle}.
$$

Now $y^{2}z^{3}-yz^{4}$ generates the top-dimensional class $H^{10}(\mathbb{F}_{D}(3;1,2),\mathbb{R})$.

Suppose $\mathbb{F}_{D}(3;1,2)$ is geometrically formal.

By Theorem \ref{Hod} and abusing of the notation we still denote by $x,y,z$ the harmonic representatives of the cohomology classes $x,y,z\in H^{2}(\mathbb{F}_{D}(3;1,2),\mathbb{R})$.

Using the relations $b_{i}'s$ and geometric formality we have that $y^{2}z^{3}-yz^{4}$ is a volume form on $\mathbb{F}_{D}(3;1,2)$.

Given a 2-form $\alpha$, we denote by $N_\alpha =\{ v \in TF_D(3;1,2): i_v\alpha=0   \}$  - the kernel distribution of $\alpha$. Since the generators (2-forms) $y,z$ satisfy $z^{5}=y^{5}=0$ and $ \dim F_D(3;1,2)=10$ it implies that the kernel distribution $N_y$ and $N_z$ has rank at least 2. Therefore we can choose locally linearly independent vector fields $v\in N_{y}$ and $w\in N_{z}$.
Note that by relation $b_{4}$ we have
\begin{equation}\label{a13}
y^{3}z+yz^{3}=0\Longrightarrow y^{3}z^{2}+yz^{4}=0\Longrightarrow y^{3}z^{2}=-yz^{4}.
\end{equation}
Moreover, for any $v\in N_y$ and $w\in N_z$ we have
$$
\begin{array}{ccl}
i_{w}(i_{v}(y^{4}+y^{2}z^{2}+z^{4}))&=&i_{w}(y^{2}\wedge(i_{v}z^{2})+i_{v}z^{4})\\
&=&(i_{w}y^{2})\wedge(i_{v}z^{2})+y^{2}\wedge i_{w}(i_{v}z^{2})+i_{w}(i_{v}z^{4})\\
&=&4(i_{w}y)\wedge y\wedge(i_{v}z)\wedge z,
\end{array}
$$
and we get 
\begin{equation}\label{a17}
(i_{w}y)\wedge y\wedge(i_{v}z)\wedge z=0.
\end{equation}

Using (\ref{a13}) and (\ref{a17}) we obtain
$$
\begin{array}{ccl}
i_{w}i_{v}(y^{3}z^{3}-yz^{4})&=&i_{w}i_{v}(y^{2}z^{3}+y^{3}z^{2})\\&=&
i_{w}(y^{2}\wedge(i_{v}z^{3})+y^{3}\wedge(i_{v}z^{2}))\\&=&
(i_{w}y^{2})\wedge(i_{v}z^{3})+y^{2}\wedge i_{w}(i_{v}z^{3})+(i_{w}y^{3})\wedge(i_{v}z^{2})+y^{3}\wedge i_{w}(i_{v}z^{2})\\&=&
2(i_{w}y)\wedge y\wedge 3(i_{v}z)\wedge z^{2}+3(i_{w}y)\wedge y^{2}\wedge 2(i_{v}z)\wedge z\\&=&
6((i_{w}y)\wedge y\wedge (i_{v}z)\wedge z)\wedge z+
6((i_{w}y)\wedge y\wedge (i_{v}z)\wedge z)\wedge y\\&=&
0,
\end{array}
$$ 
and this contradicts the fact that $y^{2}z^{3}-yz^{4}=y^{2}z^{3}+y^{3}z^{2}$ is a volume form; therefore  $SO(6)/U(1)\times U(2)$ can not be geometrically formal.
\end{proof}

The next Proposition describes the cohomology ring of the generalized flag manifold $\mathbb{F}(n+2;n,1,1)=SU(n+2)/{S(U(n)\times U(1)\times U(1))}$.

\begin{proposition}[\cite{princ}]\label{xx}
The cohomology ring of $\mathbb{F}(n+2;n,1,1)$ is generated by two elements $x$ and $y$ of degree $2$ such that 
\begin{equation}
x^{n+2}=0
\end{equation} 
and 
\begin{equation}\label{rel11}
\dfrac{(x+y)^{n+2}-x^{n+2}}{y}=0.
\end{equation}
 The generators $x$ and $y$ can be described as follow: consider the fibration $$p:\mathbb{F}(n+2;n,1,1) \rightarrow \mathbb{CP}^{n+1}$$ given by the projectivization of the tangent bundle
of $\mathbb{CP}^{n+1}$. Then $x = p^{*}(H)$ denote the pullback of the hyperplane class and $y$ denote the tautological class on the total space, restricting to the hyperplane class on every fiber.

Moreover, the relation \ref{rel11} can be rewritten as  
\begin{equation}\label{aaa}
y^{n+1} + c_{1}y^{n} +\cdots + c_{n+1} = 0,
\end{equation}
where the $c_{i}$ are the pullbacks to the total space of the Chern classes of the base.
\end{proposition}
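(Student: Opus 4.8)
The plan is to realize $\mathbb{F}(n+2;n,1,1)$ as a projective bundle over $\mathbb{CP}^{n+1}$ and then read off its cohomology ring from the Leray--Hirsch theorem (the projective bundle formula). First I would establish the geometric identification $\mathbb{F}(n+2;n,1,1)\cong\mathbb{P}(T\mathbb{CP}^{n+1})$. A point of this flag manifold is an ordered flag $\ell\subset W\subset\mathbb{C}^{n+2}$ with $\dim\ell=1$ and $\dim W=2$ (the blocks of sizes $1,1,n$, suitably reordered). Sending such a flag to $\ell\in\mathbb{CP}^{n+1}$ exhibits a fibration $p$ whose fiber over $\ell$ is the set of $2$-planes $W\supset\ell$, i.e. lines in $\mathbb{C}^{n+2}/\ell$. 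Since $T_{\ell}\mathbb{CP}^{n+1}\cong\mathrm{Hom}(\ell,\mathbb{C}^{n+2}/\ell)\cong\mathbb{C}^{n+2}/\ell$, the fiber is exactly $\mathbb{P}(T_{\ell}\mathbb{CP}^{n+1})\cong\mathbb{CP}^{n}$. This gives $p$ together with the identification of $x=p^{*}(H)$ as the pullback of the hyperplane class and of $y$ as the first Chern class of the (dual) tautological line bundle, which restricts to the hyperplane class on each fiber.

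Next I would apply the projective bundle formula with $E=T\mathbb{CP}^{n+1}$, of rank $n+1$. Since the classes $1,y,\dots,y^{n}$ restrict to a basis of $H^{*}(\mathbb{CP}^{n})$ on each fiber, Leray--Hirsch applies and yields $H^{*}(\mathbb{P}(E))\cong H^{*}(\mathbb{CP}^{n+1})[y]/\big(\sum_{i=0}^{n+1}p^{*}c_{i}(E)\,y^{n+1-i}\big)$, together with the relation $x^{n+2}=0$ inherited from the base $\mathbb{CP}^{n+1}$. Computing the Chern classes through the Euler sequence $0\to\mathcal{O}\to\mathcal{O}(1)^{\oplus(n+2)}\to T\mathbb{CP}^{n+1}\to 0$ gives $c(T\mathbb{CP}^{n+1})=(1+x)^{n+2}$, hence $c_{i}=\binom{n+2}{i}x^{i}$. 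This is precisely the assertion that the $c_{i}$ are the pullbacks of the Chern classes of the base, and it produces relation (\ref{aaa}).

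Finally I would verify the closed form of the relation by an elementary binomial identity. Expanding $(x+y)^{n+2}=\sum_{j=0}^{n+2}\binom{n+2}{j}x^{j}y^{n+2-j}$, the top term $x^{n+2}$ cancels against the subtracted $x^{n+2}$, and every remaining term carries a positive power of $y$, so that $\frac{(x+y)^{n+2}-x^{n+2}}{y}=\sum_{j=0}^{n+1}\binom{n+2}{j}x^{j}y^{n+1-j}=\sum_{i=0}^{n+1}c_{i}\,y^{n+1-i}$, which is exactly the projective bundle relation. I would stress that this division by $y$ is a purely formal polynomial division: the numerator is literally a multiple of $y$, so no inversion of $y$ in the ring is required. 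Together with $x^{n+2}=0$ this shows that $x$ and $y$ generate $H^{*}(\mathbb{F}(n+2;n,1,1))$ subject to the two stated relations.

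I expect the main obstacle to be the first step, namely setting up the identification with $\mathbb{P}(T\mathbb{CP}^{n+1})$ correctly and pinning down the orientation and sign conventions so that $y$ is the \emph{positive} tautological class restricting to the hyperplane class on each fiber, and so that the projective bundle relation appears with the signs shown in (\ref{aaa}) rather than with alternating signs. Once the bundle $E=T\mathbb{CP}^{n+1}$ and the class $y$ are fixed unambiguously, the remainder is the standard projective bundle formula combined with the binomial manipulation above.
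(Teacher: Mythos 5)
Your proposal is correct, and it is essentially the canonical argument for this result: the paper itself offers no proof (the proposition is quoted from Kotschick--Terzi\'c \cite{princ}), and your route --- identifying $\mathbb{F}(n+2;n,1,1)$ with the projectivization of $T\mathbb{CP}^{n+1}$, invoking Leray--Hirsch and the projective bundle formula, computing $c(T\mathbb{CP}^{n+1})=(1+x)^{n+2}$ from the Euler sequence, and converting the Grothendieck relation into $\frac{(x+y)^{n+2}-x^{n+2}}{y}=0$ by the binomial expansion --- is precisely the one indicated in the statement and carried out in that reference. Your attention to the sign convention (taking $y=c_{1}$ of the dual tautological bundle so the relation has positive coefficients) is exactly the point that needs care, and you handled it correctly.
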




Kotschick and Terzi\'c have proved that the family $\mathbb{F}(n+2;n,1,1)$ is not geometrically formal (see \cite{princ}).
In the next Proposition we analyse the geometric formality of the family of flag manifolds $\mathbb{F}(n+3;n,1,1,1)$.

\begin{proposition}\label{familia1}
The family of generalized flag manifolds $\mathbb{F}(n+3;n,1,1,1)=SU(n+3)/S(U(n)\times U(1)^{3})$ are not geometrically formal.
\end{proposition}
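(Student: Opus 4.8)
The plan is to reproduce the fibration strategy of Theorem \ref{teoprinc}, choosing the fibration so that the non-formality is detected on a fiber isomorphic to the flag manifold $\mathbb{F}(n+2;n,1,1)$ of Proposition \ref{xx}. Put $U=SU(n+3)$, $K=S(U(n)\times U(1)\times U(1)\times U(1))$ and $L=S(U(n+2)\times U(1))$, where $U(n+2)$ absorbs the $U(n)$ block together with two of the three $U(1)$ factors. Then $K\subset L$, $U/L=\mathbb{CP}^{n+2}$ and $L/K=U(n+2)/(U(n)\times U(1)\times U(1))=\mathbb{F}(n+2;n,1,1)$, so that we obtain the homogeneous fibration
\begin{equation*}
\mathbb{F}(n+2;n,1,1)\longrightarrow \mathbb{F}(n+3;n,1,1,1)\longrightarrow \mathbb{CP}^{n+2}.
\end{equation*}
First I would verify the hypotheses needed to run the argument: all three spaces are generalized flag manifolds, hence $k$-symmetric by Proposition \ref{flagksim}, and $\rk(K)=\rk(L)=n+2=\rk(U)$, so that Lemma \ref{rel2} applies and the restriction map to the fiber is surjective in real cohomology; Leray--Hirsch then yields $H^{*}(\mathbb{F}(n+3;n,1,1,1),\mathbb{R})\cong H^{*}(\mathbb{CP}^{n+2},\mathbb{R})\otimes H^{*}(\mathbb{F}(n+2;n,1,1),\mathbb{R})$.

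Assume, for contradiction, that $\mathbb{F}(n+3;n,1,1,1)$ admits a formal metric. Using the surjectivity of the restriction map, I lift the two degree-$2$ generators $x,y$ of $H^{*}(\mathbb{F}(n+2;n,1,1),\mathbb{R})$ from Proposition \ref{xx} to the total space and, abusing notation, let $x,y$ also denote their harmonic representatives for the formal metric. By geometric formality every product $x^{a}\wedge y^{b}$ is harmonic, so by Theorem \ref{Hod} any such product representing the zero class vanishes identically; restricting these harmonic forms to the fiber, the relations $x^{n+2}=0$ and $\frac{(x+y)^{n+2}-x^{n+2}}{y}=0$ of Proposition \ref{xx} therefore hold at the level of closed $2$-forms on $\mathbb{F}(n+2;n,1,1)$.

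The core of the proof is then a fiber computation analogous to Lemmas \ref{p2} and \ref{rel1}: I claim that any closed $2$-forms $x,y$ on $\mathbb{F}(n+2;n,1,1)$ satisfying the two relations above force the top-degree monomial $x^{n+1}\wedge y^{n}$ to vanish identically. This is exactly the form-level statement underlying the non-formality of $\mathbb{F}(n+2;n,1,1)$ established by Kotschick--Terzi\'c in \cite{princ}, and it is proved by the interior-product / kernel-distribution technique illustrated in the $SO(6)/U(1)\times U(2)$ computation above: from $x^{n+2}=0$ one gets a kernel distribution of positive rank for $x$, and combining this with the mixed relation $\frac{(x+y)^{n+2}-x^{n+2}}{y}=0$ one produces a vector field $v$ with $i_{v}(x^{n+1}\wedge y^{n})=0$. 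Since $x^{n+1}\wedge y^{n}$ represents a generator of the top cohomology of the fiber, which lies in the image of the surjective restriction map of Lemma \ref{rel2}, its harmonic representative cannot vanish identically; this contradiction shows that no formal metric exists.

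I expect the main obstacle to be this last, form-level fiber computation, together with the justification that the cohomological relations of Proposition \ref{xx} pass to genuine identities among the restricted harmonic forms. Unlike the $SO(6)$ case, where both generators satisfied pure vanishing powers, here only $x$ is directly nilpotent, and one must exploit the mixed relation coming from $\frac{(x+y)^{n+2}-x^{n+2}}{y}=0$ to control the kernel distributions $N_{x}$ and $N_{y}$ simultaneously and extract a common annihilating vector for the volume form. A secondary, more routine point is the careful verification of the fibration itself, in particular the identification $L/K=\mathbb{F}(n+2;n,1,1)$ and the rank equality $\rk(K)=\rk(L)$ required to invoke Lemma \ref{rel2}.
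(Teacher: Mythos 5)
Your architecture is exactly the paper's: the same fibration $\mathbb{F}(n+2;n,1,1)\to\mathbb{F}(n+3;n,1,1,1)\to\mathbb{CP}^{n+2}$, Proposition \ref{flagksim} plus Lemma \ref{rel2} and Leray--Hirsch, harmonic representatives of the lifted generators, and a contradiction from the identical vanishing of the restricted top monomial. The gap is in the one step you defer: the form-level fiber computation is the entire content of the paper's proof, and your sketch of it would fail as written. You propose to ``control the kernel distributions $N_x$ and $N_y$'', but $y$ is not nilpotent in $H^{*}(\mathbb{F}(n+2;n,1,1),\mathbb{R})$ --- already for $n=1$ the relations give $y^{3}=6x^{2}y\neq 0$ --- so $N_y$ has no guaranteed rank and cannot be controlled. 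The move that makes the computation work, and which the paper makes, is the change of basis $z=x+y$: since $(x+y)^{n+2}-x^{n+2}=y\cdot\frac{(x+y)^{n+2}-x^{n+2}}{y}$, the relations force $z^{n+2}=0$ at the level of forms, so \emph{both} $N_x$ and $N_z$ have rank at least $2n$ along the fiber. One then contracts the relation $z^{n+1}+xz^{n}+\cdots+x^{n}z+x^{n+1}=0$ with $2n$ local sections $v_{1},\dots,v_{2n}$ of $N_x$ (which kills every term with $x$-power at least $2$) and then with one section $w$ of $N_z$, obtaining $(i_{w}x)\wedge i_{v_{1}}\cdots i_{v_{2n}}z^{n}=0$ and hence $i_{w}i_{v_{1}}\cdots i_{v_{2n}}(x^{n+1}z^{n})=0$, which forces the top form $x^{n+1}z^{n}=x^{n+1}y^{n}$ to vanish pointwise. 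Note also that the single annihilating vector field of your sketch is not available for free: $N_x\cap N_z$ can be zero, since $2n+2n<4n+2$; the full $(2n+1)$-fold contraction is what is needed.

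A second, smaller point: you cannot simply quote \cite{princ} for the claim. What is proved there is that $\mathbb{F}(n+2;n,1,1)$ admits no formal metric, i.e.\ a statement about forms harmonic for a metric \emph{on that manifold}; what you need is the statement for merely closed $2$-forms satisfying the relations (in the spirit of Lemmas \ref{p2} and \ref{rel1}), because the restrictions of the harmonic forms to the fiber are closed but not harmonic for any metric on the fiber. You do state the claim in this correct generality, and it is true --- the contraction argument above is pointwise linear algebra and never uses harmonicity on the fiber --- but it must be proved, not cited. Your verification of the fibration itself and of the rank hypothesis of Lemma \ref{rel2} is fine, and indeed more careful than the paper's.
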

\begin{proof}
Consider the following fibration
$$
\mathbb{F}(n+2;n,1,1)\longrightarrow \mathbb{F}(n+3;n,1,1,1)
\longrightarrow \mathbb{CP}^{n+2}.
$$

Since the basis and the total space of such fibration are $k$-symmetric spaces, Lemma \ref{rel2} tell us that all cohomology classes of $\mathbb{F}(n+2;n,1,1)$ are restrictions of cohomology class of $\mathbb{F}(n+3;n,1,1,1)$. 
 
According to Proposition \ref{xx}, $H^{*}(\mathbb{F}(n+2;n,1,1),\mathbb{R})$ is generated by two elements $x,y$ in degree $2$ with relations 
\begin{equation}\label{a1}
x^{n+2}=0 \ \ \ \ \mbox{and} \ \ \ \ \ \dfrac{(x+y)^{n+2}-x^{n+2}}{y}=0.
\end{equation}

According to Leray-Hirsch's Theorem, $H^{*}(\mathbb{F}(n+3;n,1,1,1),\mathbb{R})$ is a $H^{*}(\mathbb{CP}^{n+2},\mathbb{R})$-module generated by $x,y$. 

We use the basis of $H^*(\mathbb{F}(n+2;n,1,1),\mathbb{R})$ given by $x$ and $z=x+y$ in $H^{2}(\mathbb{F}(n+2;n,1,1),\mathbb{R})$.

Suppose that $\mathbb{F}(n+3;n,1,1,1)$ is geometrically formal.

Using Theorem $\ref{Hod}$ we can identify $x,z$ with their harmonic representatives (we use the same $y,z$ to denote the harmonic forms). On $\mathbb{F}(n+3,n,1,1,1)$, $x$ and $z$ satisfy $x^{n+2}=z^{n+2}=0$ and $x^{n+1}\neq0\neq z^{n+1}$, that is, $\rk(x)=\rk(z)=2n+2$.

The rank of the kernel of $x$ is $\dim(\mathbb{F}(n+3;n,1,1,1))-rk(x)=6n+6-2n-2=4n+4$. Analogously the rank of the kernel of $z$ is $4n+4$. 

Since the codimension of the fiber is $$\dim(\mathbb{F}(n+3;n,1,1,1))-\dim(\mathbb{F}(n+2;n,1,1))=6n+6-4n-2=2n+4,$$ the restriction of $x$ and $z$ to the fiber has a kernel of rank $2n$. 

Now rewritten \ref{aaa} in terms of  $x$ and $z$ we obtain 
\begin{equation}\label{eqc1}
z^{n+1}+xz^{n}+x^{2}z^{n-1}+\cdots+x^{n}z^{n}+x^{n+1}=0.
\end{equation}
Contacting the equation \ref{eqc1} with a local basis for the kernel of $x$,  $\{v_{1},\cdots,v_{2n}\}$ we have
\begin{equation}\label{eqc2}
i_{v_{1}}\cdots i_{v_{2n}}z^{n+1}+x\wedge i_{v_{1}}\cdots i_{v_{2n}}z^{n}=0.
\end{equation}
Now contracting \ref{eqc2} with$w$ in the kernel of $z$  we have
\begin{equation}
i_{w}x\wedge i_{v_{1}}\cdots i_{v_{2n}}z^{n}=0.
\end{equation}

and therefore 
\begin{equation}
i_{w}i_{v_{1}}\cdots i_{v_{2n}}x^{n+1}z^{n}=0.
\end{equation}

Hence the restriction of $x^{n+1}z^{n}$ to $\mathbb{F}(n+2;n,1,1)$ vanishes identically and this contradicts the Lemma \ref{rel2}, finishing the proof.
\end{proof}



Our next result shows that a large class of $SU(n)$-generalized flag manifolds do not admit any formal metrics. We introduce the following notation: 
$$\mathbb{FL}(n)=\dfrac{SU(n)}{S(U(n_{1})\times U(n_{2})\times \cdots \times U(n_{k})\times U(1)^{m})}, \ n=n_{1}+\cdots +n_{k}+m,\ m\neq0 \ \normalfont{ and } \ m\geq2,$$ 
and
$$
B(n)=\dfrac{SU(n)}{S(U(n_{1}+2)\times U(n_{2})\times \cdots \times U(n_{k})\times U(1)^{m-2})}.
$$ 

\begin{theorem}
The family $\mathbb{FL}(n)$ is not geometrically formal.
\end{theorem}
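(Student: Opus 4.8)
The plan is to realize $\mathbb{FL}(n)$ as the total space of a homogeneous fibration whose fiber is one of the flag manifolds already known to be non-formal, and then to transport the kernel-distribution argument of Proposition \ref{familia1}. Using the hypothesis $m\geq 2$, I single out two of the $U(1)$-factors together with the block $U(n_{1})$ and consider the fibration
$$
\mathbb{F}(n_{1}+2;n_{1},1,1)\longrightarrow \mathbb{FL}(n)\longrightarrow B(n),
$$
whose fiber is $L/K$ with $L=S(U(n_{1}+2)\times U(n_{2})\times\cdots\times U(n_{k})\times U(1)^{m-2})$ and $K=S(U(n_{1})\times\cdots\times U(n_{k})\times U(1)^{m})$. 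The common factors $U(n_{2}),\dots,U(n_{k})$ and $U(1)^{m-2}$ cancel, so the fiber is exactly $U(n_{1}+2)/(U(n_{1})\times U(1)\times U(1))=\mathbb{F}(n_{1}+2;n_{1},1,1)$, which Kotschick and Terzi\'c proved to be non-geometrically formal and whose cohomology is described in Proposition \ref{xx}.

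Next I would verify the hypotheses of Lemma \ref{rel2}. Both $\mathbb{FL}(n)=SU(n)/K$ and $B(n)=SU(n)/L$ are generalized flag manifolds of $SU(n)$, hence $k$-symmetric by Proposition \ref{flagksim}; moreover $K\subset L$, and since every isotropy subgroup $S(U(a_{1})\times\cdots\times U(a_{r}))$ of a flag manifold of $SU(n)$ has rank $\sum a_{i}-1=n-1$, we have $\rk(K)=\rk(L)=n-1$. Lemma \ref{rel2} then guarantees that the restriction map $H^{*}(\mathbb{FL}(n),\mathbb{R})\to H^{*}(\mathbb{F}(n_{1}+2;n_{1},1,1),\mathbb{R})$ is surjective, so every cohomology class of the fiber is the restriction of a class on the total space.

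Now suppose, for contradiction, that $\mathbb{FL}(n)$ carries a formal metric. By surjectivity I choose classes on $\mathbb{FL}(n)$ restricting to the degree-two generators $x$ and $z=x+y$ of the fiber cohomology, and by Theorem \ref{Hod} I replace them with their harmonic representatives; formality makes all wedge products of these harmonic forms harmonic, so the relations of Proposition \ref{xx} hold at the level of harmonic forms after restriction. I would then reproduce the contraction argument of Proposition \ref{familia1} with $n$ replaced by $n_{1}$: from $x^{n_{1}+2}=z^{n_{1}+2}=0$ I compute the ranks of the kernel distributions of the harmonic $2$-forms $x$ and $z$, subtract the codimension of the fiber to bound the ranks of their restrictions, and contract the relation \eqref{aaa}, rewritten in $x$ and $z$, with local frames of these kernels. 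This forces the top-degree class $x^{n_{1}+1}z^{n_{1}}$ --- which restricts to the fundamental class of $\mathbb{F}(n_{1}+2;n_{1},1,1)$ --- to vanish identically on the fiber, contradicting the surjectivity supplied by Lemma \ref{rel2}.

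The hard part will be the bookkeeping in this final step: one must check that the codimension of the fiber, namely $\dim\mathbb{FL}(n)-\dim\mathbb{F}(n_{1}+2;n_{1},1,1)$, still leaves the restricted kernel distributions of $x$ and $z$ with enough rank for the successive contractions to be carried out, exactly as the count $2n+4$ was used in Proposition \ref{familia1}. Since the relevant cohomology is confined to the fiber factor $\mathbb{F}(n_{1}+2;n_{1},1,1)$ and is insensitive to the remaining blocks, I expect this count to go through uniformly in $n_{1},\dots,n_{k},m$; making that uniformity precise is where the argument must be pinned down.
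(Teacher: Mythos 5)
Your proposal follows essentially the same route as the paper's own proof: the paper likewise takes the fibration $\mathbb{F}(n_{1}+2;n_{1},1,1)\longrightarrow \mathbb{FL}(n)\longrightarrow B(n)$ (written there with a slight abuse of notation as $\mathbb{F}(n+2;n,1,1)$), invokes Lemma \ref{rel2}, Proposition \ref{xx} and Leray--Hirsch, and transports the contraction argument of Proposition \ref{familia1} to reach the contradiction. The rank-counting you flag as the remaining "hard part" is exactly what the paper dismisses with "we omit the details," so your write-up is, if anything, more explicit than the paper's (in particular in identifying the fiber correctly and verifying the equal-rank hypothesis of Lemma \ref{rel2}).
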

\begin{proof}
The proof is very similar to the proof of Proposition \ref{familia1}. Assume that $\mathbb{FL}(n)$ is geometrically formal. Now consider the fibration $$
\mathbb{F}(n+2;n,1,1)\longrightarrow \mathbb{FL}(n)\longrightarrow B(n)$$ and use the cohomology description of the fiber given by Proposition \ref{xx} and  Leray-Hirsch's Theorem in order to get an contradiction. We omit the details.

\end{proof}

Actually, we can produce several examples of non-geometrically formal examples using the same techniques above (description of the cohomology of fiber, Leray-Hirsch's Theorem, etc.). For instance, we can use the fibration 
\begin{equation}\label{fibration1}
\dfrac{SU(3)}{T^{2}}\longrightarrow \dfrac{E_{6}}{SU(3)\times U(1)^{4}}\longrightarrow \dfrac{E_{6}}{SU(3)^{2}\times U(1)^{2}}
\end{equation}
in order to proof that $E_{6}/(SU(3)\times U(1)^{4})$  is not geometrically formal. We summarize this in the next result, exhibiting a list of homogeneous space that are not geometrically formal (including several examples of homogeneous space of exceptional Lie groups).

\begin{theorem}
The homogeneous spaces listed in the Table \ref{non-formal} are not geometrically formal.
\end{theorem}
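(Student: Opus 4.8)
The plan is to treat each entry of Table \ref{non-formal} by the uniform fibration-and-restriction argument that was already carried out in Theorem \ref{teoprinc} and Propositions \ref{familia1} and the one preceding them. For a fixed homogeneous space $\mathbb{F}_\Theta$ appearing in the table, the first step is to exhibit a homogeneous fibration
$$
F\longrightarrow \mathbb{F}_\Theta\longrightarrow B,
$$
chosen so that the fiber $F$ is one of the ``building-block'' flag manifolds whose non-geometric formality is controlled by an explicit cohomological obstruction: namely a full flag manifold $SU(k)/T$ (Proposition \ref{p1}, Lemma \ref{p2}), a flag of type $Spin(2k+1)/T$ or $Sp(k)/T$ (Proposition \ref{rel3}, Lemma \ref{rel1}), or a flag of the form $\mathbb{F}(n+2;n,1,1)$ (Proposition \ref{xx}). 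The fibration \eqref{fibration1} for $E_6/(SU(3)\times U(1)^4)$, with fiber $SU(3)/T^2$, is the model case.

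The second step is to check the hypotheses that make the obstruction transfer. Both $\mathbb{F}_\Theta$ and $B$ are generalized flag manifolds, hence $k$-symmetric by Proposition \ref{flagksim}; since the relevant subgroups satisfy the equal-rank inclusion, Lemma \ref{rel2} applies and the restriction map $H^{*}(\mathbb{F}_\Theta,\mathbb{R})\to H^{*}(F,\mathbb{R})$ is surjective. Leray--Hirsch then gives $H^{*}(\mathbb{F}_\Theta,\mathbb{R})\cong H^{*}(F,\mathbb{R})\otimes H^{*}(B,\mathbb{R})$, so the degree-$2$ generators of $H^{*}(F,\mathbb{R})$ lift to $\mathbb{F}_\Theta$ and, under the assumption of geometric formality, may be replaced by their harmonic representatives via Theorem \ref{Hod}. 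These harmonic forms still satisfy the multiplicative relations of the fiber's cohomology ring.

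The third step is to derive the contradiction. Assuming $\mathbb{F}_\Theta$ is geometrically formal, the wedge products of these harmonic forms remain harmonic, so the top product that represents the fundamental class of $F$ is a genuine form on $\mathbb{F}_\Theta$ whose restriction to $F$ is the fiber volume form. But the structural relations of the fiber (supplied by Lemma \ref{p2}, Lemma \ref{rel1}, or the kernel-distribution computation used for $\mathbb{F}(n+2;n,1,1)$ and $\mathbb{F}(n+3;n,1,1,1)$) force that very product to vanish identically when restricted to $F$. This contradicts the surjectivity guaranteed by Lemma \ref{rel2}, and hence $\mathbb{F}_\Theta$ admits no formal metric.

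The main obstacle is not any single computation but the case-by-case bookkeeping: for every space in Table \ref{non-formal} one must actually produce a fibration whose base and total space are again generalized flag manifolds (so that $k$-symmetry and the equal-rank condition of Lemma \ref{rel2} hold) and whose fiber is one of the spaces for which the vanishing-of-the-volume-form lemma is already established. Verifying that such a fibration exists for the exceptional examples, and that the fiber's obstruction class pulls back to a class realized by harmonic forms on the total space, is where the real work lies; once the fibration is in hand, the contradiction follows verbatim as above.
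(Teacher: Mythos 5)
Your proposal follows essentially the same route as the paper: the authors also argue by exhibiting, for each entry of Table \ref{non-formal}, a homogeneous fibration whose fiber is a building-block flag manifold (detailing only the model case $E_6/(SU(3)\times U(1)^4)$ with fiber $SU(3)/T^2$), invoking $k$-symmetry and Lemma \ref{rel2} for surjectivity of restriction, Leray--Hirsch, harmonic representatives via Theorem \ref{Hod}, and the vanishing lemmas (Lemma \ref{p2}, Lemma \ref{rel1}, or the kernel-distribution argument) to contradict that surjectivity. Your write-up is correct and, like the paper itself, leaves the case-by-case construction of the fibrations as routine verification.
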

\begin{proof}
We will give the details just for the manifold $E_{6}/(SU(3)\times U(1)^{4})$. Consider the fibration (\ref{fibration1}). Since the basis and the total space of such fibration are $k$-symmetric spaces, Lemma \ref{rel2} tell us that all cohomology classes of $SU(3)/T^2$ are restrictions of cohomology class of $E_{6}/(SU(3)\times U(1)^{4})$. 

By Leray-Hirsch's Theorem we have:
$$
H^{*}\left(\dfrac{E_{6}}{SU(3)\times U(1)^{4}},\mathbb{R}\right)\cong H^{*}\left(\dfrac{SU(3)}{T^{2}},\mathbb{R}\right)\otimes H^{*}\left(\dfrac{E_{6}}{SU(3)^{2}\times U(1)^{2}},\mathbb{R}\right).
$$ 

Consider the cohomology ring of  $SU(3)/T^{2}$ given by Proposition \ref{p1}. Suppose that $E_{6}/(SU(3)\times U(1)^{4})$ is geometrically formal. Using Theorem $\ref{Hod}$ we can identify $x_1,x_2$ with their harmonic representatives (we use the same $x_1,x_2$ to denote the harmonic forms with respect to the formal metric). Therefore the relations (\ref{a9}) hold for these harmonic forms. If we restrict these forms to the fiber $SU(3)/T^2$, Lemma \ref{p2} implies that the form  $x_{1}\wedge x_{2}^{2}$ vanish identically and this fact contradicts the Lemma \ref{rel2}.  

\end{proof}

\begin{table}
\caption{Homogeneous spaces non-geometrically formal.}
\label{non-formal}
\begin{tabular}{|c|c|}
\hline
$SO(2n+5)/(U(n)\times U(1)^{2})$&
$Sp(2n+2)/(U(n)\times U(1)^{2})$\\
\hline
$E_{6}/(SU(3)\times U(1)^{4})$&
$E_{6}/(SU(2)\times U(1)^{5})$\\
\hline
$E_{6}/(SU(3)\times SU(2)\times U(1)^{3})$&
$E_{6}/(SU(2)^{2}\times U(1)^{4})$\\
\hline
$E_{7}/(SU(3)\times U(1)^{5})$&
$E_{7}/(SU(4)\times U(1)^{3})$\\
\hline
$E_{7}/(SU(2)^{2}\times U(1)^{5})$&
$E_{7}/(SU(2)^{3}\times U(1)^{4})$\\
\hline
$E_{7}/(SU(3)\times SU(2)\times U(1)^{4})$&
$E_{7}/(SU(5)\times U(1)^{3})$\\
\hline
$E_{7}/(SU(4)\times SU(2)\times U(1)^{3})$&
$E_{8}/(SU(2)\times U(1)^{6})$\\
\hline
$E_{8}/(SU(3)\times U(1)^{5})$&
$E_{8}/(SU(3)\times SU(2)\times U(1)^{4})$\\
\hline
$E_{8}/(SU(5)\times U(1)^{3})$&
$E_{8}/(SU(2)^{2}\times U(1)^{5})$\\
\hline
$E_{8}/(SU(4)\times U(1)^{4})$&
$E_{8}/(SU(2)^{3}\times U(1)^{4})$\\
\hline
$E_{8}/(SU(4)\times SU(2)\times U(1)^{3})$&
$F_{4}/(SU(2)\times U(1)^{3})$\\
\hline
$SO(2n+4)/U(n)\times U(1)^{2}$&\\
\hline
\end{tabular}
\end{table}

\section{Chern numbers on generalized flag manifolds}

\subsection{Generalized flag manifolds of the classical Lie group $SU(n)$} \label{sec1}
\subsubsection{The generalized flag manifolds with 3 isotropy summands}
Recall the notation about generalized flag manifold
$$
\mathbb{F}(n;n_1,n_2,n_3)=SU(n)/S(U(n_1)\times U(n_2)\times U(n_3)),
$$
where $n=n_1+n_2+n_3$.

The isotropy representation  of $\mathbb{F}(n;n_1,n_2,n_3)$ splits into 3 isotropy summands. Therefore $\mathbb{F}(n;n_1,n_2,n_3)$ admits 4 invariant almost complex structures, up to conjugation:
$$
\begin{array}{ccccccc}
J_{1}=(+,+,+)&&&&&&J_{3}=(+,+,-)\\
J_{2}=(-,+,+)&&&&&&J_{4}=(+,-,+).
\end{array}
$$

In the next proposition we obtain a complete description of almost complex structures on several generalized flag manifolds. This is done computing the Chern number $c_1^n$ where $n$ is the complex dimension of the manifold.

\begin{proposition}\label{complete3}
The following generalized flag manifolds 

\begin{table}[h] 

\centering


\begin{tabular}{|c|c|c|} 
\hline 

$M$ & $\dim$ (real) & $\chi(M)$\\ 

\hline
\hline
$\mathbb{F}(6;1,2,3)$ & $22$ & $60$\\
$\mathbb{F}(7;1,2,4)$ & $28$ & $105$\\
$\mathbb{F}(8;1,2,5)$ & $34$ & $168$\\
$\mathbb{F}(8;1,3,4)$ & $38$ & $280$\\
\hline
\end{tabular}
\label{tab2-novo}
\end{table}
have precisely four invariant almost complex structures up to conjugation and equivalence; three of them are integrable and the one is non-integrable.
\end{proposition}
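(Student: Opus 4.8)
The plan is to combine the Borel--Hirzebruch counting result with an integrability analysis and a computation of a single Chern number. First I would fix the combinatorial bookkeeping. Writing $\{1,\dots,n\}=B_1\sqcup B_2\sqcup B_3$ for the partition into blocks of sizes $n_1,n_2,n_3$, the complementary roots of $\mathbb{F}(n;n_1,n_2,n_3)$ are the $e_i-e_j$ with $i,j$ in distinct blocks, and by Theorem \ref{142} the three irreducible isotropy summands $\mathfrak{m}_{ab}$ ($1\le a<b\le 3$) collect the roots joining $B_a$ to $B_b$. Proposition \ref{numest} then gives exactly $2^{3-1}=4$ invariant almost complex structures up to conjugation, namely the $J_1,\dots,J_4$ in the statement; each is recorded by a sign vector $(\varepsilon_{12},\varepsilon_{13},\varepsilon_{23})$, where $\varepsilon_{ab}=+$ means that the root spaces pointing from $B_a$ to $B_b$ lie in $T^{1,0}$.

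Next I would settle integrability via Lemma \ref{order} by reading the sign vector as an orientation of the triangle on the vertices $1,2,3$. The only obstruction to closedness of the complementary positive system comes from the root-sum relation $\mathfrak{m}_{12}+\mathfrak{m}_{23}\subseteq \mathfrak{m}_{13}$: if $e_i-e_j\in\mathfrak{m}_{12}$ and $e_j-e_k\in\mathfrak{m}_{23}$ both lie in $T^{1,0}$, then so must $e_i-e_k$, which forces $\varepsilon_{13}=+$ whenever $\varepsilon_{12}=\varepsilon_{23}=+$. The vectors $J_1=(+,+,+)$, $J_2=(-,+,+)$, $J_3=(+,+,-)$ are precisely the transitive tournaments, corresponding to the total orders $1<2<3$, $2<1<3$, $1<3<2$; each admits a compatible order and is therefore integrable. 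The vector $J_4=(+,-,+)$ is the $3$-cycle $1\to 2\to 3\to 1$, which is not closed, so $J_4$ is non-integrable. This already yields three integrable and one non-integrable structure up to conjugation.

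It remains to prove that the three integrable structures are pairwise inequivalent, i.e. not biholomorphic; this is where the Chern number enters, since biholomorphic complex manifolds share all Chern numbers, while the non-integrable $J_4$ cannot be biholomorphic to any of them. Putting $S_a=\sum_{i\in B_a}e_i$ and $\sigma_{ab}=n_bS_a-n_aS_b$, the first Chern class of the structure $(\varepsilon_{12},\varepsilon_{13},\varepsilon_{23})$ is
$$
c_1=\varepsilon_{12}\,\sigma_{12}+\varepsilon_{13}\,\sigma_{13}+\varepsilon_{23}\,\sigma_{23}\in H^2(\mathbb{F};\mathbb{R}).
$$
I would then compute the top self-intersection $c_1^{\,d}$, where $d=11,14,17,19$ is the complex dimension in the four cases, by evaluating this degree-$d$ class on the fundamental class determined by the complex orientation of each $J_i$, using the Borel presentation $H^*(\mathbb{F};\mathbb{R})\cong\left(\mathbb{R}[e_1,\dots,e_n]/I_W\right)^{W_K}$ together with the fixed-point localization sum over $W/W_K$. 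One checks that $c_1^{\,d}$ is conjugation-invariant: the sign change $c_1\mapsto -c_1$ and the $(-1)^d$ change of complex orientation cancel, so $c_1^{\,d}$ descends to a well-defined invariant of each conjugacy class. Carrying out this evaluation for $J_1,J_2,J_3$ yields three distinct values, whence the three integrable structures are pairwise inequivalent; together with $J_4$ this gives exactly four structures up to conjugation and equivalence.

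The hard part will be the final step: the explicit evaluation of $c_1^{\,d}$ in degrees as large as $d=19$. The intersection-theoretic bookkeeping on $\mathbb{F}(n;n_1,n_2,n_3)$ is heavy, and the real content of the proof is the verification that, in each of the four manifolds, the three numbers $c_1^{\,d}$ are pairwise different, so that a single Chern number already separates the integrable structures. Should two values happen to coincide, one would have to invoke a further Chern number, or control equivalences directly through Proposition \ref{si}; but for the four manifolds in the statement the values $c_1^{\,d}$ turn out to be pairwise distinct, which closes the argument.
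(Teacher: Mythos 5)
Your proposal follows essentially the same route as the paper's proof: counting $2^{3-1}=4$ structures via Proposition \ref{numest}, settling integrability through Lemma \ref{order} (the paper exhibits exactly your three transitive orderings for $J_1,J_2,J_3$ and the same cyclic obstruction $x_1>x_2>x_4>x_1$ for $J_4$), and then separating the structures by the single top Chern number $c_1^{d}$, whose pairwise distinct values the paper tabulates (e.g.\ $c_1^{14}$ for $\mathbb{F}(7;1,2,4)$). The only minor difference is that the paper evaluates $c_1^{d}$ on all four structures at once, whereas you use non-integrability to exclude $J_4$ and reserve the Chern-number computation for the three integrable ones; both are valid, and your deferred ``hard part'' is precisely the computation the paper carries out.
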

\begin{proof}
We will proof the proposition just for $\mathbb{F}(7;1,2,4)$. To the other manifolds, the proof are similar.
The Cartan sub-algebra $\mathfrak{h}$ of $\mathfrak{su}(n)$ is given by
$$
\got{h}=\{\diag(x_{1},\cdots,x_{7}):x_{1}+\cdots+x_{7}=0\}.
$$

According lemma \ref{order}, the invariant almost complex structures  $J_{1}$, $J_{2}$ and $J_{3}$ are integrable and  $J_{4}$ is not integrable. In fact, by analyzing irreducible summands of the isotropy representation and the Weyl chamber of $\mathfrak{h}$ one can order the coordinates of the Cartan sub-algebra of  $(\mathbb{F}(7;1,2,4),J_{1})$, $(\mathbb{F}(7;1,2,4),J_{2})$ and $(\mathbb{F}(7;1,2,4),J_{3})$ in the following way:

$J_{1}$: $x_{1}>x_{2}>x_{3}>x_{4}>x_{5}>x_{6}>x_{7}$;

$J_{2}$: $x_{2}>x_{3}>x_{1}>x_{4}>x_{5}>x_{6}>x_{7}$;

$J_{3}$: $x_{1}>x_{4}>x_{5}>x_{6}>x_{7}>x_{2}>x_{3}$.

On the other hand there is no ordering on the coordinates of $(\mathbb{F}(7;1,2,4),J_{4})$. In fact, the manifold $(\mathbb{F}(7;1,2,4),J_{4})$ have the following roots: 

$x_{1}-x_{2}>0$, $x_{1}-x_{3}>0$, $x_{1}-x_{4}<0$, $x_{1}-x_{5}<0$, $x_{1}-x_{6}<0$, $x_{1}-x_{7}<0$, $x_{2}-x_{4}>0$ and $x_{2}-x_{5}>0$.

In this way, 
$x_{1}>x_{2}$, $x_{2}>x_{4}$, but $x_{4}>x_{1}$, that is, $x_{1}>x_{2}>x_{4}>x_{1}$, and this is a contradiction.

The cohomology ring of  $\mathbb{F}(7;1,2,4)$ is given by:
$$
H^{*}(\mathbb{F}(7;1,2,4),\mathbb{R})=\dfrac{\mathbb{R}[x_{1},x_{2},x_{3},x_{4},x_{5},x_{6},x_{7}]}{\langle e_{1},e_{2},e_{3},e_{4},e_{5},e_{6},e_{7}\rangle},
$$ 
where $e_{k}=x_{1}^{k}+x_{2}^{k}+x_{3}^{k}+x_{4}^{k}+x_{5}^{k}+x_{6}^{k}+x_{7}^{k}$.

The class $x_{2}x_{3}x_{4}^{3}x_{5}^{3}x_{6}^{3}x_{7}^{3}\in H^{28}(\mathbb{F}(7;1,2,4),\mathbb{R})$ generates the top dimensional cohomology class, where  $c_{14}=105x_{2}x_{3}x_{4}^{3}x_{5}^{3}x_{6}^{3}x_{7}^{3}$.

Computing the Chern number $c_{1}^{14}$ to the four invariant almost complex structures, we have

\begin{table}[h] 

\centering

\caption{ } 

\begin{tabular}{|c|c|c|c|c|} 

\hline 

&$J_1=(+,+,+)$ & $J_2=(-,+,+)$ & $J_3=(+,+,-)$ & $J_4=(-,+,-)$\\ 
\hline
\hline
$c_{1}^{14}$& 4169710642825728 & 3967580897280000 & 5340215200320000 & 68881612800 \\
\hline
\end{tabular}
\label{tab-dif}
\end{table}

Since the Chern numbers computed in the table \ref{tab-dif} are distinct for the 4 invariant complex structures we conclude that these structures are not equivalent.


\end{proof}

\begin{remark}
We list in the next table the relevant Chern numbers used in the proof of the proposition \ref{complete3}.

\footnotesize
\begin{table}[h] 

\centering


\begin{tabular}{|c|c|c|c|c|c|} 

\hline 

$M$& $c_1^n$ &$J_1=(+,+,+)$ & $J_2=(-,+,+)$ & $J_3=(+,+,-)$ & $J_4=(-,+,-)$\\ 
\hline
\hline
$\mathbb{F}(6;1,2,3)$& $c_{1}^{11}$& -166320000000 & 187110000000 & -156539053440 & 0 \\
$\mathbb{F}(8;1,2,5)$& $c_{1}^{17}$& -207657272688465600000& -199318721129508524544 & 303212843288789930496& 1250749500000000\\
 $\mathbb{F}(8;1,3,4)$ & $c_{1}^{19}$ & 301923064586776419730944&262989979268101525440000 & 347992057571330652480000&363738375000000000\\
\hline
\end{tabular}
\label{tab2}
\end{table}

\normalfont

\end{remark}

\begin{proposition}
The 16-dimensional generalized flag manifold $\mathbb{F}(5;1,2,2)$ has 3 invariant almost complex structures, up to conjugation and equivalence: two of them are integrable and non-equivalent and third is non-integrable.
\end{proposition}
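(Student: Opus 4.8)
The plan is to follow the strategy of Proposition \ref{complete3}, but to exploit the extra symmetry coming from the equality of the two blocks of size $2$. Since the isotropy representation of $\mathbb{F}(5;1,2,2)$ splits into three irreducible, non-equivalent summands $\mathfrak{m}_1\oplus\mathfrak{m}_2\oplus\mathfrak{m}_3$, Proposition \ref{numest} gives $2^{3-1}=4$ invariant almost complex structures up to conjugation, which I again denote
$$
J_1=(+,+,+),\quad J_2=(-,+,+),\quad J_3=(+,+,-),\quad J_4=(+,-,+).
$$
Writing $\mathfrak{t}=\{(a,b,b,c,c):a+2b+2c=0\}$ for the centre of $\mathfrak{k}$, the three positive $T$-roots are $\xi_1=a-b$, $\xi_2=a-c$, $\xi_3=b-c$, so they satisfy the single relation $\xi_2=\xi_1+\xi_3$. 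I will first record the integrability of each $J_i$ from Lemma \ref{order}: the relation $\xi_2=\xi_1+\xi_3$ shows that a sign pattern $(\varepsilon_1,\varepsilon_2,\varepsilon_3)$ comes from a linear order on the three blocks (equivalently, the complementary root system is positive and closed) precisely when it is \emph{not} one of the two cyclic patterns $(+,-,+)$ and $(-,+,-)$. Hence $J_1,J_2,J_3$ are integrable and $J_4$ is non-integrable; for the integrable ones I will exhibit explicit orderings of $x_1,\dots,x_5$ exactly as in Proposition \ref{complete3}, e.g. $x_1>x_2>x_3>x_4>x_5$ for $J_1$ and $x_1>x_4>x_5>x_2>x_3$ for $J_3$.

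The new ingredient is the permutation $\sigma$ of $\mathfrak{h}$ interchanging the two blocks of size $2$, namely $x_2\leftrightarrow x_4$, $x_3\leftrightarrow x_5$. This is an automorphism of the Cartan subalgebra preserving (as a set) the root system of $K=S(U(1)\times U(2)\times U(2))$, since it swaps the two $A_1$ factors $\{\pm(x_2-x_3)\}$ and $\{\pm(x_4-x_5)\}$. On $T$-roots it acts by $\xi_1\leftrightarrow\xi_2$, $\xi_3\mapsto-\xi_3$, hence it carries the sign data $(\varepsilon_1,\varepsilon_2,\varepsilon_3)$ to $(\varepsilon_2,\varepsilon_1,-\varepsilon_3)$. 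In particular $\sigma$ sends $J_1=(+,+,+)$ to $(+,+,-)=J_3$, whereas it sends $J_2$ and $J_4$ to $(+,-,-)=-J_2$ and $(-,+,-)=-J_4$, which are conjugate to $J_2$ and $J_4$ themselves. By Proposition \ref{si} the automorphism $\sigma$ realizes an equivalence $J_1\sim J_3$, so up to conjugation and equivalence at most three structures remain: $\{J_1\sim J_3\}$, $\{J_2\}$ and $\{J_4\}$.

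It then remains to show these three are genuinely distinct. The structure $J_4$ is non-integrable and integrability is preserved under the diffeomorphisms defining equivalence, so $J_4$ is not equivalent to either integrable class. To separate the two integrable classes I will compute the Chern number $c_1^{8}$, where $\dim_{\mathbb{C}}\mathbb{F}(5;1,2,2)=8$. Using the Borel presentation $H^*(\mathbb{F}(5;1,2,2),\mathbb{R})=\bigl(\mathbb{R}[x_1,\dots,x_5]/\langle p_1,\dots,p_5\rangle\bigr)^{W_K}$, with $p_k=\sum_i x_i^k$ and $W_K$ permuting $\{x_2,x_3\}$ and $\{x_4,x_5\}$, I will identify the generator of the top group $H^{16}$ (normalised so that the top Chern class $c_8$ equals $\chi=30$ times this generator) and write $c_1=\sum_{\alpha\in R_J^{+}}\alpha$ as the sum of the complementary positive roots determined by each sign pattern. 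Reducing $c_1^{8}$ modulo the relations then expresses it as an integer multiple of the top class; I expect $c_1^{8}(J_1)=c_1^{8}(J_3)$ (a consistency check with the equivalence found above) and $c_1^{8}(J_1)\neq c_1^{8}(J_2)$, which forces $J_1\not\sim J_2$. Combining the three arguments yields exactly three invariant almost complex structures up to conjugation and equivalence: two integrable and non-equivalent, and one non-integrable.

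The main obstacle is conceptual rather than computational. Unlike in Proposition \ref{complete3}, the Chern numbers cannot be allowed to decide everything, because two of the four conjugacy classes are forced to coincide and therefore must share the same Chern number. The essential point is thus to establish the equivalence $J_1\sim J_3$ \emph{geometrically}, through the block-swap automorphism $\sigma$ and Proposition \ref{si}, and only afterwards to invoke $c_1^{8}$ to certify that $J_2$ is not equivalent to $J_1$. Some care is needed to verify that $\sigma$ genuinely fixes the $K$-root system and induces the stated permutation of the summands, since it is exactly this symmetry --- absent in the asymmetric cases $\mathbb{F}(n;1,n_2,n_3)$ with $n_2\neq n_3$ treated in Proposition \ref{complete3} --- that is responsible for the drop from four to three structures.
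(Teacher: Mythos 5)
Your proposal is correct and takes essentially the same approach as the paper: the same orderings give integrability of $J_1,J_2,J_3$, your block-swap $\sigma$ is exactly the paper's automorphism $f(\diag(x_1,x_2,x_3,x_4,x_5))=\diag(x_1,x_4,x_5,x_2,x_3)$ invoked with Proposition \ref{si} to get $J_1\sim J_3$, and the Chern numbers you propose to compute are precisely how the paper finishes (its Table \ref{tab3} confirms your expectation: $c_1^{8}=15805440$ for the class of $J_1\sim J_3$ versus $14696640$ for $J_2$). The only difference is that the paper carries out the full Chern-number computation rather than leaving it as a predicted check.
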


\begin{proof}
We first remark that the almost complex structures $J_{1}$, $J_{2}$ and $J_{3}$ are integrable and $J_{4}$ is non-integrable. In fact, one can ordering the coordinates of the Cartan sub-algebra of $(M_{1},J_{1})$, $(M_{1},J_{2})$ and $(M_{1},J_{3})$ in the following way: 
$$
\begin{array}{c}
J_{1}: \ x_{1}>x_{2}>x_{3}>x_{4}>x_{5},\\
J_{2}: \ x_{2}>x_{3}>x_{1}>x_{4}>x_{5},\\
J_{3}: \ x_{1}>x_{4}>x_{5}>x_{2}>x_{3}.
\end{array}
$$
By Lemma  \ref{order}, we conclude that  $J_{1}$, $J_{2}$ and $J_{3}$ are integrable. On the other hand, the almost complex structure $J_{4}$ is not integrable since there is not exist an ordering compatible with $J_4$ on the coordinates  $x_{1},x_{2},x_{3},x_{4},x_{5}$  such that the roots are positive.  

Now we will prove that the complex structures $J_{1}$ and $J_{3}$ are equivalent. Consider the following map:
$$
\begin{array}{rcl}
f: \got{h}\subset A_{4}&\rightarrow&\got{h}\subset A_{4}\\
\diag(x_{1},x_{2},x_{3},x_{4},x_{5})&\mapsto& \diag(x_{1},x_{4},x_{5},x_{2},x_{3}).
\end{array}
$$
The map $f$ is an automorphism of $\got{h}$ that sends the root system of  $J_{1}$ into root system of  $J_{3}$ and fixes the root system of  $S(U(1)\times U(2)\times U(2))$.  By Proposition \ref{si} follow that $J_{1}$ and $J_{3}$ are equivalent.

The cohomology ring of $\mathbb{F}(5;1,2,2)$ is given by 
$$
H^{*}(\mathbb{F}(5;1,2,2),\mathbb{R})=\dfrac{\mathbb{R}[x_{1},x_{2},x_{3},x_{4},x_{5}]}{\langle e_{1},e_{2},e_{3},e_{4},e_{5}\rangle},
$$ 
where $e_{k}=x_{1}^{k}+x_{2}^{k}+x_{3}^{k}+x_{4}^{k}+x_{5}^{k}$.

We can conclude the proof analyzing the Chern numbers of  $\mathbb{F}(5;1,2,2)$ listed in the Table \ref{tab3}.

\end{proof}

\begin{table}[h] 

\centering

\caption{Chern numbers of $\mathbb{F}(5;1,2,2)$} 

\begin{tabular}{|c|c|c|c|c|} 

\hline 

&$J_1=(+,+,+)$ & $J_2=(-,+,+)$ & $J_3=(-,+,-)$ & $J_4=(+,+,-)$\\ 
\hline
\hline
$c_{8}$& 30 & 30 & 30 & 30\\
\hline
$c_{1}^{8}$& 15805440 & 14696640 &2240 & 15805440 \\
\hline
$c_{1}^{6}c_{2}$ & 7579680 & 7085880 & 760 & 7579680\\ 
\hline
$c_{1}^{5}c_{3}$ & 2262960 & 2143260 & 220 & 2262960\\
\hline
$c_{1}^{4}c_{4}$ & 459990 & 444690 & 210 & 459990\\
\hline
$c_{1}^{4}c_{2}^{2}$ & 3637010 & 3419010 & 290 & 3637010\\
\hline
$c_{1}^{3}c_{5}$ & 66510 & 65610 & 90 & 66510\\
\hline
$c_{1}^{3}c_{2}c_{3}$ & 1087270 & 1035720 & 180 & 1087270\\
\hline
$c_{1}^{2}c_{6}$ & 7020 & 7020 & 60 & 7020\\
\hline
$c_{1}^{2}c_{2}^{3}$ & 1746170 & 1650870 & 110 & 1746170\\
\hline 
$c_{1}^{2}c_{3}^{2}$ & 325940 & 314640 & 360 & 325940\\
\hline
$c_{1}^{2}c_{2}c_{4}$ & 221430 & 215280 & 140 & 221430\\
\hline
$c_{1}c_{7}$ & 540 & 540 & 60 & 540\\
\hline
$c_{1}c_{2}^{2}c_{3}$ & 522690 & 500790 & 70 & 522690\\
\hline
$c_{1}c_{2}c_{5}$ & 32070 & 31770 & 10 & 32070\\
\hline
$c_{1}c_{3}c_{4}$ & 66660 & 65610 & 230 & 66660\\
\hline
$c_{2}^{4}$ & 838840 & 797640 & 40 & 838840\\
\hline
$c_{2}^{2}c_{4}$ & 106660 & 104260 & 60 & 106660\\
\hline
$c_{2}c_{6}$ & 3390 & 3390 & 10 & 3390\\
\hline
$c_{2}c_{3}^{2}$ & 156880 & 152280 & 40 & 156880\\
\hline
$c_{3}c_{5}$ & 9690 & 9690 & -30 & 9690\\
\hline
$c_{4}^{2}$ & 13730 & 13730 & 130 & 13730\\
\hline
\end{tabular}
\label{tab3}
\end{table}

\begin{remark}
We now apply the Hirzebruch-Riemann-Roch Theorem. Since the arithmetic genus coincides with the Todd genus we have 
$$
\begin{array}{ccl}
\sum_{q=0}^{8}(-1)^{q}h^{0,q}&=&(1/3628800)(-3c_{8}-3c_{1}^{8}+24c_{1}^{6}c_{2}-50c_{1}^{4}c_{2}^{2}+8c_{1}^{2}c_{2}^3\\
&+&21c_{2}^{4}-14c_{1}^{5}c_{3}+26c_{1}^{3}c_{2}c_{3}+50c_{1}c_{2}^{2}c_{3}+
3c_{1}^{2}c_{3}^{2}\\&-&8c_{2}c_{3}^{2}+14c_{1}^{4}c_{4}-19c_{1}^{2}c_{2}c_{4}-
34c_{2}^{2}c_{4}-13c_{1}c_{3}c_{4}+5c_{4}^{2}\\&-&7c_{1}^3c_{5}-16c_{1}c_{2}c_{5}
+3c_{3}c_{5}+7c_{1}^{2}c_{6}+13c_{2}c_{6}+3c_{1}c_{7}).
\end{array}
$$

If $M$ is a smooth manifold with a complex structure $J$ we have (cf. \cite{opa}, \cite{princ})
$$\sum_{q=0}^{8}(-1)^{q}h^{0,q}=1.$$ 
Therefore
$$
\begin{array}{ccl}
3628800&=&-3c_{8}-3c_{1}^{8}+24c_{1}^{6}c_{2}-50c_{1}^{4}c_{2}^{2}+8c_{1}^{2}c_{2}^3\\
&+&21c_{2}^{4}-14c_{1}^{5}c_{3}+26c_{1}^{3}c_{2}c_{3}+50c_{1}c_{2}^{2}c_{3}+
3c_{1}^{2}c_{3}^{2}\\&-&8c_{2}c_{3}^{2}+14c_{1}^{4}c_{4}-19c_{1}^{2}c_{2}c_{4}-
34c_{2}^{2}c_{4}-13c_{1}c_{3}c_{4}+5c_{4}^{2}\\&-&7c_{1}^3c_{5}-16c_{1}c_{2}c_{5}
+3c_{3}c_{5}+7c_{1}^{2}c_{6}+13c_{2}c_{6}+3c_{1}c_{7}.
\end{array}
$$
\end{remark}

The next theorem classify the almost complex structures in the family  $\mathbb{F}(3n;n,n,n)=SU(3n)/S(U(n)\times U(n)\times U(n))$ .

\begin{theorem}\label{flag6222}
The family of generalized flag manifolds  $\mathbb{F}(3n;n,n,n)=SU(3n)/S(U(n)\times U(n)\times U(n))$ has two invariant almost complex structures, up to conjugation and equivalence: one is an integrable structure and the other is non-integrable.
\end{theorem}
\begin{proof}
The Cartan sub-algebra  $\got{h}$ of $\got{su}(n)$ is given by 
$$
\got{h}=\{\diag(x_{1},\cdots,x_{3n}):x_{1}+\cdots+x_{3n}=0\}.
$$

The isotropy representation  $\mathbb{F}(3n;n,n,n)$ decomposes into three isotropic summands:
$$
\begin{array}{l}
\got{m}_{1}=\got{u}_{1,n+1}\oplus\cdots\oplus \got{u}_{1,2n}\cdots \got{u}_{n,n+1}\oplus\cdots\oplus \got{u}_{n,2n},\\
\got{m}_{2}=\got{u}_{1,2n+1}\oplus\cdots\oplus \got{u}_{1,3n}\cdots \got{u}_{n,2n+1}\oplus\cdots\oplus \got{u}_{n,3n},\\
\got{m}_{3}=\got{u}_{n+1,2n+1}\oplus\cdots\oplus \got{u}_{n+1,3n}\cdots \got{u}_{2n,2n+1}\oplus\cdots\oplus \got{u}_{2n,3n},
\end{array}
$$
with $\got{u}_{ij}=\got{su}(3n)\cap (\got{g}_{ij}\oplus \got{g}_{ji})$, where $\mathfrak{g}_{ij}$ is the (complex) root space associated to the root $\alpha_{ij}=x_i-x_j$ of $\mathfrak{sl}(3n.\mathbb{C})=(\mathfrak{su}(3n))^{\mathbb{C}}$. 

According to proposition \ref{numest}, $\mathbb{F}(3n;n,n,n)$ admits four invariant almost complex structures, up to conjugation:
$$
\begin{array}{cccccccccc}
J_{1}=(+,+,+)&&&&&&&&&J_{2}=(-,+,+)\\
J_{3}=(+,+,-)&&&&&&&&&J_{4}=(+,-,+).
\end{array}
$$
By lemma \ref{order}, the structures  $J_{1}$, $J_{2}$ and $J_{3}$ are integrable. In fact, by analyzing irreducible summands of the isotropy representation and the Weyl chamber of $\mathfrak{h}$ we consider the following order for the coordinates on the Cartan sub-algebra:
$$
\begin{array}{l}
J_{1}:\ \ x_{1}>\cdots >x_{n}>x_{n+1}>\cdots>x_{2n}>x_{2n+1}>\cdots>x_{3n},\\
J_{2}:\ \ x_{n+1}>\cdots>x_{2n}>x_{1}>\cdots >x_{n}>x_{2n+1}>\cdots>x_{3n},\\
J_{3}:\ \ x_{1}>\cdots >x_{n}>x_{2n+1}>\cdots>x_{3n}>x_{n+1}>\cdots>x_{2n}.
\end{array}
$$
But there no exist an ordering to coordinates on the Cartan sub-algebra of $(\mathbb{F}(3n;n,n,n),J_{4})$ where the roots are positive, hence $J_4$ is not integrable. We now proof:
\\

{\bf Claim}: $J_{1}$, $J_{2}$ and $J_3$ are equivalent.

We being proving that $J_1$ and $J_2$ are equivalent. We consider the map  $g:\got{h}\longrightarrow \got{h}$ given by $$g(x_{1},\cdots,x_{n},x_{n+1}\cdots,x_{2n},x_{2n+1},\cdots x_{3n})=
(x_{n+1},\cdots,x_{2n},x_{1},\cdots ,x_{n},x_{2n+1},\cdots,x_{3n}).$$  

The map  $g$ is an automorphism of  $\got{h}$ that sends the root system of $J_{1}$ onto the root system of  $J_{2}$ and keep fixed the root system of $S(U(n)\times U(n)\times U(n))$.

In the same way, one can prove that  $J_{1}$ and $J_{3}$ are equivalent: just consider the following automorphism $f$ of $\mathbb{h}$ given by $$f(x_{1},\cdots,x_{n},x_{n+1}\cdots,x_{2n},x_{2n+1},\cdots x_{3n})=
(x_{1},\cdots ,x_{n},x_{2n+1},\cdots,x_{3n},x_{n+1},\cdots,x_{2n}),$$this way we conclude the proof.
\end{proof}

\begin{remark}
The invariant complex structure on $\mathbb{F}(3n;n,n,n)$ described in the proposition \ref{flag6222} is compatible with the invariant Kahler-Einstein metric described by Arvanitoyeorgos in \cite{grego1}.
\end{remark}

\subsubsection{The full flag manifold $\mathbb{F}(4)=SU(4)/T$}

Consider the 12-dimensional full flag manifold $\mathbb{F}(4)=SU(4)/T$ with Euler characteristic $24$.

According to \cite{livro} a Cartan sub-algebra $\got{h}$ of $\got{su}(4)$ is given by
$$
\got{h}=\{\diag(x,y,z,w):x+y+z+w=0,\ x,y,z,w\in\mathbb{C}\}.
$$

The isotropy representation of $\mathbb{F}(4)$ admits six isotropic summands, that is,
$$
\begin{array}{ccl}
\got{m}&=&\got{m}_{1}\oplus\got{m}_{2}\oplus \got{m}_{3}\oplus\got{m}_{4}\oplus\got{m}_{5}\oplus \got{m}_{6}\\
&=&\got{u}_{12}\oplus \got{u}_{13}\oplus \got{u}_{14}\oplus \got{u}_{23}\oplus \got{u}_{24}\oplus \got{u}_{34},
\end{array}
$$
where $\got{u}_{ij}=\got{su}(4)\cap (\got{g}_{ij}\oplus \got{g}_{ji})$, and $\got{g}_{ij}$ is a complex root space associated to the root $\alpha_{ij}$.

By proposition \ref{numest} the flag manifold $\mathbb{F}(4)$ admits $2^{5}=32$ invariant almost complex structures up to conjugation. According to \cite{parede}, $\mathbb{F}(4)$ admits exactly $4$ invariant almost complex structures, up to conjugation and equivalence:
$$
\begin{array}{cccccc}
J=(+,+,+,+,+,+)&&&&&I_{1}=(+,+,+,-,+,-)\\
I_{2}=(+,+,-,+,+,+)&&&&&I_{3}=(-,+,+,-,+,+),\\
\end{array}
$$
where $J$ is integrable and $I_{1}$, $I_{2}$ and $I_{3}$ are not integrable.

The cohomology ring of $\mathbb{F}(4)$ is given by 
$$
H^{*}(\mathbb{F}(4),\mathbb{R})=\dfrac{\mathbb{R}[x,y,z,w]}{\langle e_{1},e_{2},e_{3},e_{4}\rangle},
$$
where $e_{k}=x^{k}+y^{k}+z^{k}+w^{k}$, with $x,y,z,w$ in degree $2$ and $k=1,2,3,4$.

The Chern numbers of $\mathbb{F}(4)$ are in the table \ref{tab5}.  

\begin{table}[h] 

\centering

\caption{Chern numbers of $\mathbb{F}(4)$} 

\begin{tabular}{|c|c|c|c|} 

\hline 

$J$ & $I_{1}$ & $I_{2}$ & $I_{3}$\\ 

\hline
\hline


$c_{1}^{6}=46080$&$c_{1}^{6}=0$&$c_{1}^{6}=0$&$c_{1}^{6}=0$\\
\hline
$c_{1}^{4}c_{2}=23040$&$c_{1}^{4}c_{2}=0$&$c_{1}^{4}c_{2}=0$&$c_{1}^{4}c_{2}=0$ \\
\hline
$c_{1}^{2}c_{2}^{2}=11520$&$c_{1}^{2}c_{2}^{2}=0$&$c_{1}^{2}c_{2}^{2}=0$&$c_{1}^{2}c_{2}^{2}=0$\\ 
\hline
$c_{1}^{3}c_{3}=7360$&$c_{1}^{3}c_{3}=384$&$c_{1}^{3}c_{3}=-64$&$c_{1}^{3}c_{3}=384$\\
\hline
$c_{2}^{3}=5760$&$c_{2}^{3}=0$&$c_{2}^{3}=0$&$c_{2}^{3}=0$\\
\hline
$c_{1}c_{2}c_{3}=3680$&$c_{1}c_{2}c_{3}=192$&$c_{1}c_{2}c_{3}=-32$&$c_{1}c_{2}c_{3}=192$\\
\hline 
$c_{1}^{2}c_{4}=1600$&$c_{1}^{2}c_{4}=384$&$c_{1}^{2}c_{4}=-64$&$c_{1}^{2}c_{4}=384$\\
\hline
$c_{3}^{2}=1168$&$c_{3}^{2}=144$&$c_{3}^{2}=-16$&$c_{3}^{2}=144$\\
\hline
$c_{2}c_{4}=800$&$c_{2}c_{4}=192$&$c_{2}c_{4}=-32$&$c_{2}c_{4}=192$\\
\hline
$c_{1}c_{5}=240$&$c_{1}c_{5}=144$&$c_{1}c_{5}=-96$&$c_{1}c_{5}=144$\\
\hline
$c_{6}=24$&$c_{6}=24$&$c_{6}=-24$&$c_{6}=24$\\
\hline
\end{tabular}
\label{tab5}
\end{table}

The Hirzebruch-Riemann-Roch Theorem provides the following relations between the Chern numbers on the flag manifold $(\mathbb{F}(4),J)$, where $J$ is invariant complex structure,
$$
\begin{array}{ccl}
60480&=&2c_{6}-2c_{1}c_{5}-9c_{2}c_{4}-5c_{1}^{2}c_{4}-c_{3}^{2}+11c_{1}c_{2}c_{3}+5c_{1}^{3}c_{3}\\
&+&10c_{2}^{3}+11c_{1}^{2}c_{2}^{2}-12c_{1}^{4}c_{2}+2c_{1}^{6}.
\end{array}
$$

\subsubsection{The full flag manifold $\mathbb{F}(5)=SU(5)/T$}

Consider the 20-dimensional full flag manifold $\mathbb{F}(5)=SU(5)/T$ with Euler characteristic $120$.

According \cite{livro} a Cartan sub-algebra $\got{h}$ of $\got{su}(5)$ is given by 
$$
\got{h}=\{\diag(x,y,z,w,u):x+y+z+w+u=0,\ x,y,z,w,u\in\mathbb{C}\}.
$$

The isotropic representation of $\mathbb{F}(5)$ admits $10$ isotropic summands, that is,
$$
\begin{array}{ccl}
\got{m}&=&\got{m}_{1}\oplus\got{m}_{2}\oplus \got{m}_{3}\oplus\got{m}_{4}\oplus\got{m}_{5}\oplus \got{m}_{6}\oplus \got{m}_{7}\oplus\got{m}_{8}\oplus\got{m}_{9}\oplus \got{m}_{10}\\
&=&\got{u}_{12}\oplus \got{u}_{13}\oplus \got{u}_{14}\oplus \got{u}_{15}\oplus \got{u}_{23}\oplus \got{u}_{24}\oplus \got{u}_{25}\oplus \got{u}_{34}\oplus \got{u}_{35}\oplus \got{u}_{45},
\end{array}
$$
where $\got{u}_{ij}=\got{su}(5)\cap (\got{g}_{ij}\oplus \got{g}_{ji})$, and $\got{g}_{ij}$ is a complex root space associated to the root $\alpha_{ij}$.

By proposition \ref{numest}, $\mathbb{F}(5)$ admits $2^{9}=512$ invariant almost complex structures, up to conjugations. According \cite{parede}, $\mathbb{F}(5)$ admits precisely $12$ invariant almost complex structures, up to conjugation and equivalence:
$$
\begin{array}{cccc}
J=(+,+,+,+,+,+,+,+,+,+)&&&I_{1}=(-,+,+,+,-,+,+,+,+,+)\\
I_{2}=(-,+,+,+,-,+,+,-,+,+)&&&I_{3}=(+,+,+,+,-,+,+,-,+,+)\\
I_{4}=(+,+,+,+,+,+,+,-,+,-)&&&I_{5}=(+,+,+,+,-,+,+,-,+,-)\\
I_{6}=(+,+,+,-,+,+,+,+,+,+)&&&I_{7}=(+,+,+,-,+,+,+,-,+,-)\\
I_{8}=(+,+,+,-,+,+,-,+,+,+)&&&I_{9}=(-,+,+,-,-,+,-,+,+,+)\\
I_{10}=(+,+,+,-,-,+,+,-,+,+)&&&I_{11}=(-,-,+,+,-,-,+,-,-,-),\\
\end{array}
$$
where $J$ is the canonical invariant almost complex structure (integrable).

The cohomology ring of $\mathbb{F}(5)$ is given by 
$$
H^{*}(\mathbb{F}(5),\mathbb{R})=\dfrac{\mathbb{R}[x,y,z,w,u]}{\langle e_{1},e_{2},e_{3},e_{4},e_{5}\rangle},
$$
where $e_{k}=x^{k}+y^{k}+z^{k}+w^{k}+u^{k}$ with $x,y,z,w,u$ in degree $2$ and $k=1,2,3,4,5$.  .

The Chern numbers of $\mathbb{F}(5)$ with the respective invariant almost complex structure are in the tables \ref{tabf51} and \ref{tabf54}.

\begin{table}[h] 

\centering

\caption{Chern numbers of $\mathbb{F}(5)$} 

\begin{tabular}{|c|c|c|c|c|c|c|} 

\hline 

 & $J$ & $I_{1}$& $I_{2}$ & $I_{3}$ &  $I_{4}$ &  $I_{5}$ \\ 

\hline
\hline
$c_{1}^{10}$&3715891200&0& 0 & 0 & 0 & 0 \\
\hline
$c_{1}^{8}c_{2}$&1857945600&0& 0 & 0& 0 & 0\\
\hline
$c_{1}^{7}c_{3}$&610086400&5806080& -1236480 & 6881280& -1236480 & 5806080\\
\hline
$c_{1}^{6}c_{4}$&14560000&5806080& -1236480 & 6881280& -1236480 & 5806080\\
\hline
$c_{1}^{5}c_{5}$&26464000&2753280& -629760 & 3194880& -629760 & 2753280 \\
\hline
$c_{1}^{4}c_{6}$&3744000&804480& -219520 & 890880& -219520 & 804480\\
\hline 
$c_{1}^{3}c_{7}$&415200&157920& -56320 & 165120& -56320 & 157920 \\
\hline
$c_{1}^{2}c_{8}$&36000&21120& -10720 & 21120 & -10720 & 21120 \\
\hline
$c_{1}^{6}c_{2}^{2}$&928972800&0& 0 & 0 & 0 & 0 \\
\hline
$c_{1}^{4}c_{2}^{3}$&464486400&0& 0 & 0& 0 & 0  \\
\hline
$c_{1}^{2}c_{2}^{4}$&232243200&0& 0 & 0 & 0 & 0 \\
\hline
$c_{2}^{5}$&116121600&0& 0 & 0 & 0 & 0 \\
\hline
$c_{1}^{4}c_{3}^{2}$&100160000&1908480& -416000 & 2273280 & -416000 & 1908480\\
\hline
$c_{1}c_{3}^{3}$&16442400&470880& -104640 & 564480 & -104640 & 470880 \\
\hline
$c_{1}^{2}c_{2}c_{3}^{2}$&50080000&954240& -208000 & 1136640 & -208000 & 954240 \\
\hline
$c_{1}^{3}c_{2}^{2}c_{3}$&152521600&1451520& -309120 & 1720320 & -309120 & 1451520 \\
\hline
$c_{1}c_{2}^{3}c_{3}$&76260800&725760& -154560 & 860160& -154560 & 725760 \\
\hline
$c_{2}^{2}c_{3}^{2}$&2504000&477120& -104000 & 568320& -104000 & 477120  \\
\hline
$c_{1}^{2}c_{4}^{2}$&5699200&456960& -106880 & 552960& -106880 & 456960\\
\hline
$c_{2}c_{4}^{2}$&2849600&228480& -53440 & 276480 & -53440 & 228480  \\
\hline
$c_{1}^{3}c_{3}c_{4}$&23899200&1182720& -261440 & 1413120& -261440 & 1182720 \\
\hline
$c_{1}c_{9}$&2400&1920& -1440 & 1920& -1440 & 1920 \\
\hline
$c_{1}^{5}c_{2}c_{3}$ &305043200&2903040& -618240 & 3440640& -618240 & 2903040\\
\hline
$c_{1}^{3}c_{2}c_{5}$&13232000&1376640& -314880 & 1597440 & -314880 & 1376640\\
\hline
$c_{1}^{2}c_{2}^{2}c_{4}$&36400000&1451520& -309120 & 1720320& -309120 & 1451520\\
\hline
$c_{1}^{2}c_{2}c_{6}$&1872000&402240& -109760 & 445440& -109760 & 402240\\
\hline
$c_{1}^{2}c_{3}c_{5}$&43424400&493920& -117600 & 577920& -117600 & 493920\\
\hline 
$c_{1}c_{2}c_{7}$&207600&78960& -28160 & 82560& -28160 & 78960\\
\hline
$c_{1}^{4}c_{2}c_{4}$&72800000&2903040& -618240 & 3440640& -618240 & 2903040\\
\hline
$c_{1}c_{2}^{2}c_{5}$&661600&688320& -157440 & 798720& -157440 & 688320\\
\hline
$c_{1}c_{2}c_{3}c_{4}$&11949600&591360& -130720 & 706560& -130720 & 591360\\
\hline
$c_{1}c_{3}c_{6}$&614000&138000& -39200 & 153600 & -39200 & 138000\\
\hline
$c_{1}c_{4}c_{5}$&1034400&149760& -38880 & 178560& -38880 & 149760\\
\hline
$c_{2}c_{8}$&18000&10560& -5360 & 10560& -5360 & 10560\\
\hline
$c_{2}^{3}c_{4}$&18200000&7257760& -154560 & 860160& -154560 & 725760\\
\hline
$c_{2}^{2}c_{6}$&936000&201120& -54880 & 222720 & -54880 & 201120\\
\hline
$c_{2}c_{3}c_{5}$&2171200&246960& -58800 & 288960& -58800 & 246960\\
\hline
$c_{3}c_{7}$&68040&26520& -9880 & 27720& -9880 & 26520\\
\hline
$c_{3}^{2}c_{4}$&3922400&232320& -52640 & 280320& -52640 & 232320\\
\hline
$c_{4}c_{6}$&146000&37440& -11760 & 42240& -11760 & 37440\\
\hline
$c_{5}^{2}$&187360&39360& -11840 & 46560& -11840 & 39360\\
\hline
$c_{10}$&120&120& -120 & 120& -120 & 120\\
\hline

\end{tabular}
\label{tabf51}
\end{table}

\begin{table}[h] 

\centering

\caption{Chern numbers of $\mathbb{F}(5)$ - continuation} 

\begin{tabular}{|c|c|c|c|c|c|c|} 

\hline 

 & $I_{6}$ & $I_{7}$ & $I_{8}$ & $I_{9}$ & $I_{10}$& $I_{11}$ \\ 

\hline
\hline

$c_{1}^{10}$& 0 & 0& 0 & 0& 0 & 0 \\
\hline
$c_{1}^{8}c_{2}$& 0 & 0& 0 & 0& 0 & 0\\
\hline
$c_{1}^{7}c_{3}$& -573440 & 0& 53760 & -17920 & 53760 & -17920 \\
\hline
$c_{1}^{6}c_{4}$& -573440 & 0 & 53760 & -17920 & 53760 & -17920 \\
\hline
$c_{1}^{5}c_{5}$& -335360 & 0& 19200 & -6400 & 19200 & -6400 \\
\hline
$c_{1}^{4}c_{6}$& -129280 & -10240& 2560 & 1280 & 2560 & 1280 \\
\hline 
$c_{1}^{3}c_{7}$& -34240 & -10240 & 1120 & 1760& 1120 & 1760 \\
\hline
$c_{1}^{2}c_{8}$& -6400 & -4480 & 1120 & 800& 1120 & 800 \\
\hline
$c_{1}^{6}c_{2}^{2}$& 0 & 0 & 0 & 0& 0 & 0  \\
\hline
$c_{1}^{4}c_{2}^{3}$& 0 & 0 & 0 & 0& 0 & 0  \\
\hline
$c_{1}^{2}c_{2}^{4}$& 0 & 0 & 0 & 0 & 0 & 0 \\
\hline
$c_{2}^{5}$& 0 & 0 & 0 & 0 & 0 & 0 \\
\hline
$c_{1}^{4}c_{3}^{2}$& -161280 & -20480& 20480 & -2560 & 20480 & -2560 \\
\hline
$c_{1}c_{3}^{3}$& -33600 & -9600& 5280 & 1440 & 5280 & 1440 \\
\hline
$c_{1}^{2}c_{2}c_{3}^{2}$& -80640 & -10240& 10240 & -1280 & 10240 & -1280  \\
\hline
$c_{1}^{3}c_{2}^{2}c_{3}$& -143360 & 0& 13440 & -4480& 13440 & -4480 \\
\hline
$c_{1}c_{2}^{3}c_{3}$& -71680 & 0 & 6720 & -2240 & 6720 & -2240\\
\hline
$c_{2}^{2}c_{3}^{2}$& -40320 & -5120& 5120 & -640& 5120 & -640  \\
\hline
$c_{1}^{2}c_{4}^{2}$& -17920 & -20480 & 7040 & 1920& 7040 & 1920  \\
\hline
$c_{2}c_{4}^{2}$& -8960 & -10240 & 3520 & 960& 3520 & 960 \\
\hline
$c_{1}^{3}c_{3}c_{4}$& -89600 & -20480 & 13760 & -320& 13760 & -320 \\
\hline
$c_{1}c_{9}$& -960 & -960& 480 & 480& 480 & 480\\
\hline
$c_{1}^{5}c_{2}c_{3}$ & -286720 & 0& 26880 & -8960& 26880 & -8960\\
\hline
$c_{1}^{3}c_{2}c_{5}$& -167680 & 0& 9600 & -3200& 9600 & -3200\\
\hline
$c_{1}^{2}c_{2}^{2}c_{4}$& -143360 & 0& 13440 & -4480& 13440 & -4480\\
\hline
$c_{1}^{2}c_{2}c_{6}$& -64640 & -5120& 1280 & 640& 1280 & 640\\
\hline
$c_{1}^{2}c_{3}c_{5}$& -45760 & -9600& 4800 & 640& 4800 & 640\\
\hline 
$c_{1}c_{2}c_{7}$& -17120 & -5120& 560 & 880& 560 & 880\\
\hline
$c_{1}^{4}c_{2}c_{4}$& -286720 & 0& 26880 & -8960& 26880 & -8960\\
\hline
$c_{1}c_{2}^{2}c_{5}$& -83840 & 0& 4800 & -1600& 4800 & -1600\\
\hline
$c_{1}c_{2}c_{3}c_{4}$& -44800 & -10240& 6880 & -160& 6880 & -160\\
\hline
$c_{1}c_{3}c_{6}$& -17760 & -4160& 560 & 1520& 560 & 1520\\
\hline
$c_{1}c_{4}c_{5}$& -3840 & -9600& 2400 & 1440& 2400 & 1440\\
\hline
$c_{2}c_{8}$& -3200 & -2240& 560 & 400& 560 & 400\\
\hline
$c_{2}^{3}c_{4}$& -71680 & 0& 6720 & -2240& 6720 & -2240\\
\hline
$c_{2}^{2}c_{6}$& -32320 & -2560& 640 & 320& 640 & 320\\
\hline
$c_{2}c_{3}c_{5}$& -22880 & -4800& 2400 & 320& 2400 & 320\\
\hline
$c_{3}c_{7}$& -4840 & -2120& 40 & 840& 40 & 840\\
\hline
$c_{3}^{2}c_{4}$& -13440 & -7040& 2720 & 1760& 2720 & 1760\\
\hline
$c_{4}c_{6}$& -1600 & -2880& 240 & 1360& 240 & 1360\\
\hline
$c_{5}^{2}$& -1760 & -4320& 800 & 1120& 800 & 1120\\
\hline
$c_{10}$& -120 & -120& 120 & 120& 120 & 120\\
\hline

\end{tabular}
\label{tabf54}
\end{table}



\subsection{Generalized flag manifolds of the classical Lie group $SO(2n)$}\label{sec2}

In this section we will study the almost complex geometry of several flag manifolds of the classical Lie group $SO(2n)$. We call this manifolds of flag manifolds of $D_{l}$ type. We will compute the Chern numbers of the invariant almost complex structures and use these numbers in order to distinguish them. In some cases, these numbers provide a complete classification of such structures, up to conjugation and equivalence.   

\subsubsection{The flag manifold $\mathbb{F}_{D}(3;1,2)=SO(6)/U(1)\times U(2)$}
Consider the $10$-dimensional flag manifold $\mathbb{F}_{D}(3;1,2)=SO(6)/U(1)\times U(2)$ with  Euler characteristic $\chi(\mathbb{F}_{D}(3;1,2))=12$.

We will use a set of $T-$roots given by Definition \ref{t-root} and the Proposition \ref{142} in order to describe the irreducible isotropic summands of $\mathbb{F}_{D}(3;1,2)$. 

The Cartan sub-algebra of $\got{h}$ of $\got{so}(6)$ is given by
\begin{equation}\label{cartan-so6}
\got{h}=\left(\begin{array}{cl}
\Lambda&\\
&-\Lambda
\end{array}\right),
\ \ \mbox{ where } \ \Lambda=\{\diag(x,y,z):x,y,z\in \mathbb{C}\},
\end{equation}

and complementary positive roots are: 
$$\Pi^{+}=\{x+y,x+z,y+z,x-y,x-z\}.$$

Consider the sub-algebra $\got{t}$ of $\got{h}$ given by
$$
\got{t}=\{\diag(d_{1},-d_{1},d_{2},-d_{2},d_{2},-d_{2})\}.
$$

The restriction of the complementary roots to the sub-algebra $\got{t}$ is the set of $T$-roots:
$$
R_{T}=\{\pm(d_{1}+d_{2}),\pm 2d_{2},\pm(d_{1}-d_{2})\}.
$$
Therefore,
$$
\begin{array}{ccl}
\got{m}&=&\got{m}_{\pm(d_{1}+d_{2})}\oplus \got{m}_{\pm 2d_{2}}\oplus \got{m}_{\pm(d_{1}-d_{2})}\\
&=&\got{m}_{1}\oplus\got{m}_{2}\oplus\got{m}_{3},
\end{array}
$$
where, $\dim_{\mathbb{C}}(\got{m}_{1})=\dim_{\mathbb{C}}(\got{m}_{3})=2$ and $\dim_{\mathbb{C}}(\got{m}_{2})=1$.

By proposition \ref{numest} $\mathbb{F}_{D}(3;1,2)$ admits $4$ invariant almost complex structures, up to conjugation:
$$
\begin{array}{ccccccc}
J_{1}=(+,+,+)&&&&&&J_{3}=(+,+,-)\\
J_{2}=(-,+,+)&&&&&&J_{4}=(+,-,+).
\end{array}
$$
Using the same techniques as in the previous section one can check that $J_{1}$, $J_{3}$ and $J_{4}$ are integrable.

The cohomology ring of $\mathbb{F}_{D}(3;1,2)$ is given by
$$
H^{*}(\mathbb{F}_{D}(3;1,2),\mathbb{R})=\dfrac{\mathbb{R}[x,y,z]}{\langle s_{1},s_{2},e_{3}\rangle},
$$
where $s_{k}=x^{2k}+y^{2k}+z^{2k}$ and $e_{3}=xyz$,  with $x,y,z$ in degree $2$ and $k=1,2$.

The class $3(y^{2}z^{3}-yz^{4})$ is the top Chern class in $H^{10}(M,\mathbb{R})$. 

The Chen numbers of the almost complex manifolds $(\mathbb{F}_{D}(3;1,2),J_{i})$, $i=1,2,3,4$ are in the table \ref{tabso1}.

\begin{table}[h] 

\centering

\caption{Chern numbers of $SO(6)/U(1)\times U(2)$} 

\begin{tabular}{|c|c|c|c|c|} 

\hline 

&$(+,+,+)$ & $(-,+,+)$ & $(+,+,-)$ & $(+,-,+)$\\ 

\hline
\hline

$c_{5}$& 12 & 12 & 12 & -12\\
\hline
$c_{1}^{5}$& 4500 & -20 & 4860 & -4500 \\
\hline
$c_{1}^{3}c_{2}$ & 2148 & -4 & 2268 & -2148\\ 
\hline
$c_{1}^{2}c_{3}$ & 612 & 20 & 612 & -612\\
\hline
$c_{1}c_{4}$ & 108 & 12 & 108 & -108\\
\hline
$c_{1}c_{2}^{2}$ & 1028 & -4 & 1068 & -1028\\
\hline 
$c_{2}c_{3}$ & 292 & 4 & 292 & -292\\
\hline
\end{tabular}
\label{tabso1}
\end{table}

\begin{proposition}
The generalized flag manifold $SO(6)/U(1)\times U(2)$ admits at least $3$ invariant almost complex structures up to equivalence and conjugation.
\end{proposition}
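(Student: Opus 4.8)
The plan is to treat the Chern numbers recorded in Table \ref{tabso1} as invariants of the four structures $J_1,J_2,J_3,J_4$ furnished by Proposition \ref{numest}, and to exhibit three of them that remain pairwise distinct once one factors out \emph{both} conjugation and equivalence. The starting point is that equivalent almost complex structures (being related by the biholomorphisms/diffeomorphisms of Proposition \ref{si}) have identical Chern numbers, so any Chern number that separates two structures certifies their non-equivalence.

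The key preliminary observation I would make is how the invariants behave under conjugation. Passing from $J$ to its conjugate $-J$ reverses the orientation and multiplies each Chern class $c_k$ by $(-1)^k$; in particular the top Chern number $c_1^5$ merely changes sign. Hence the quantity $|c_1^5|$ is invariant under conjugation \emph{and} under equivalence simultaneously, which is exactly the invariant adapted to counting structures ``up to conjugation and equivalence''.

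Reading off the row $c_1^5$ of Table \ref{tabso1}, I would then record
$$
|c_1^5(J_1)|=4500,\qquad |c_1^5(J_2)|=20,\qquad |c_1^5(J_3)|=4860,
$$
which are three pairwise distinct values. Since $|c_1^5|$ is invariant under conjugation and equivalence, the structures $J_1$, $J_2$, $J_3$ must lie in three different classes, and this already establishes the asserted lower bound of at least three invariant almost complex structures up to conjugation and equivalence.

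The main obstacle — and precisely the reason the statement claims ``at least $3$'' rather than exactly four — is that $J_4=(+,-,+)$ satisfies $|c_1^5(J_4)|=4500=|c_1^5(J_1)|$, so this single invariant cannot decide whether $J_4$ contributes a genuinely new class or whether it is equivalent (possibly after conjugation) to $J_1$. Sharpening the count to four would require either an additional Chern number from Table \ref{tabso1} that distinguishes $J_4$ from $J_1$ up to the sign ambiguity from conjugation, or an explicit automorphism of $\got{h}$ of the type in Proposition \ref{si} realizing such an equivalence; with $c_1^5$ alone one obtains only the stated weaker bound, so I would stop at the clean separation of $J_1$, $J_2$, $J_3$.
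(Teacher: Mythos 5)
Your proposal is correct and follows essentially the same route as the paper, which simply records the Chern numbers of $J_1,\dots,J_4$ in Table \ref{tabso1} and reads the lower bound off the fact that they distinguish $J_1$, $J_2$, $J_3$. Your explicit use of $|c_1^5|$ as an invariant under both conjugation and equivalence, together with the observation that $J_4$ cannot be separated from (the conjugate of) $J_1$ by these numbers, makes precise exactly what the paper leaves implicit, including why the statement says ``at least $3$'' rather than four.
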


By Hirzebruch-Riemann-Roch Theorem we have 
$$
\sum_{q=0}^{5}(-1)^{q}h^{0,q}=(1/1440)(-c_{1}c_{4}+c_{1}^{2}c_{3}+3c_{1}c_{2}^{2}-c_{1}^{3}c_{2}).
$$

Since the cohomology ring of $\mathbb{F}_{D}(3;1,2)$ is of type $(p,p)$, $\sum_{q=0}^{5}(-1)^{q}h^{0,q}=1$, it holds the following relation between the Chern numbers:
$$
1440=-c_{1}c_{4}+c_{1}^{2}c_{3}+3c_{1}c_{2}^{2}-c_{1}^{3}c_{2}.
$$

%
%

\subsubsection{The flag manifold $\mathbb{F}_{D}(4;1,3)=SO(8)/U(1)\times U(3)$}

Consider the $18$-dimensional flag manifold $\mathbb{F}_{D}(4;1,3)=SO(8)/U(1)\times U(3)$ whose Euler characteristic is $\chi(\mathbb{F}_{D}(4;1,3))=32$.


The Cartan sub-algebra $\got{h}$ of $\got{so}(8)$ is
$$
\got{h}=\left(\begin{array}{cl}
\Lambda&\\
&-\Lambda
\end{array}\right),
\ \ where \ \Lambda=\{\diag(x,y,z,w):x,y,z,w\in \mathbb{C}\}.
$$

The complementary roots are: 
$$
\Pi^{+}=\{x_{1}+x_{2},x_{1}+x_{3},x_{1}+x_{4},x_{2}+x_{3},x_{2}+x_{4},
x_{3}+x_{4},x_{1}-x_{2},x_{1}-x_{3},x_{1}-x_{4}\}.
$$

Consider the sub-algebra $\got{t}$ of $\got{h}$ given by
$$
\got{t}=\{\diag(d_{1},-d_{1},d_{2},-d_{2},d_{2},-d_{2},d_{2},-d_{2})\}.
$$

The restriction of the complementary roots to the sub-algebra $\got{t}$ is the set of $T$-roots:
$$
R_{T}=\{\pm(d_{1}+d_{2}),\pm 2d_{2},\pm(d_{1}-d_{2})\}.
$$
Therefore,
$$
\begin{array}{ccl}
\got{m}&=&\got{m}_{\pm(d_{1}+d_{2})}\oplus \got{m}_{\pm 2d_{2}}\oplus \got{m}_{\pm(d_{1}-d_{2})}\\
&=&\got{m}_{1}\oplus\got{m}_{2}\oplus\got{m}_{3}.
\end{array}
$$

By proposition \ref{142},
$$
\begin{array}{l}
\got{m}_{1}=\got{g}_{x_{1}+x_{2}}\oplus \got{g}_{-(x_{1}+x_{2})}\oplus \got{g}_{x_{1}+x_{3}}\oplus \got{g}_{-(x_{1}+x_{3})}\oplus \got{g}_{x_{1}+x_{4}}\oplus \got{g}_{-(x_{1}+x_{4})},\\
\got{m}_{2}=\got{g}_{x_{2}+x_{3}}\oplus \got{g}_{-(x_{2}+x_{3})}\oplus \got{g}_{x_{2}+x_{4}}\oplus \got{g}_{-(x_{2}+x_{4})}\oplus \got{g}_{x_{3}+x_{4}}\oplus \got{g}_{-(x_{3}+x_{4})},\\
\got{m}_{3}=\got{g}_{x_{1}-x_{2}}\oplus \got{g}_{-(x_{1}-x_{2})}\oplus \got{g}_{x_{1}-x_{3}}\oplus \got{g}_{-(x_{1}-x_{3})}\oplus \got{g}_{x_{1}-x_{4}}\oplus \got{g}_{-(x_{1}-x_{4})},
\end{array}
$$

and $\dim_{\mathbb{C}}(\got{m}_{1})=\dim_{\mathbb{C}}(\got{m}_{2})=\dim_{\mathbb{C}}(\got{m}_{3})=3$.

According to proposition \ref{numest} $\mathbb{F}_{D}(4;1,3)$ admits $4$ invariant almost complex structures up to conjugation, namely,
$$
\begin{array}{ccccccc}
J_{1}=(+,+,+)&&&&&&J_{3}=(+,+,-)\\
J_{2}=(-,+,+)&&&&&&J_{4}=(+,-,+).
\end{array}
$$

The cohomology ring of  $\mathbb{F}_{D}(4;1,3)$ is
$$
H^{*}(\mathbb{F}_{D}(4;1,3),\mathbb{R})=\dfrac{\mathbb{R}[x,y,z,w]}{\langle s_{1},s_{2},s_{3},e_{4}\rangle},
$$
where $s_{k}=x^{2k}+y^{2k}+z^{2k}+w^{2k}$ and $e_{4}=xyzw$,  with $x,y,z,w$ in degree $2$ and $k=1,2,3$.

The Chern numbers of the almost complex structures $(\mathbb{F}_{D}(4;1,3),J_{i})$ are in the table \ref{tabso21}.

\begin{table}[h] 

\centering

\caption{Chern numbers of $SO(8)/U(1)\times U(3)$} 

\begin{tabular}{|c|c|c|c|c|} 

\hline 

&$(+,+,+)$ & $(-,+,+)$ & $(+,+,-)$ & $(+,-,+)$\\ 

\hline
\hline
$c_{9}$ & 32 & 32 & 32 & 32\\
\hline
$c_{1}c_{8}$ & -96 & 96 & 96 & 0\\
\hline
$c_{2}c_{7}$ & -786 & 786 & 786 & -6\\
\hline
$c_{1}^{2}c_{7}$ & -1632 & 1632 & 1632 & 0\\
\hline
$c_{3}c_{6}$ & -2958 & 2958 & 2958 & 6\\
\hline
$c_{1}c_{2}c_{6}$ & -9792 & 9792 & 9792 & 0\\
\hline
$c_{1}^{3}c_{6}$ & -20352& 20352 & 20352 & 0\\
\hline
$c_{4}c_{5}$ & -5592 & 5592 & 5592 & -24\\
\hline
$c_{1}c_{3}c_{5}$ & -26976 & 26976 & 26976 & 0\\
\hline
$c_{2}^{2}c_{5}$ & -43128 & 43128 & 43128 & -24\\
\hline
$c_{1}^{2}c_{2}c_{5}$ & -89856 & 89856 & 89856 & 0\\
\hline
$c_{1}^{4}c_{2}c_{3}$ & -2974464 & 2974464 & 2974464 & 0\\
\hline
$c_{1}^{4}c_{5}$ & -187392 & 187392 & 187392 & 0\\
\hline
$c_{1}c_{4}^{2}$ & -37440 & 37440 & 37440 & 0\\
\hline
$c_{2}c_{3}c_{4}$ & -87138 & 87138 & 87138 & -6\\
\hline
$c_{1}^{2}c_{3}c_{4}$ & -181728 & 181728 & 181728 & 0\\
\hline
$c_{1}c_{2}^{2}c_{4}$ & -291168 & 291168 & 291168 & 0\\
\hline
$c_{1}^{3}c_{2}c_{4}$ & -608256 & 608256 & 608256 & 0\\
\hline
$c_{1}^{5}c_{4}$ & -1271808 & 1271808 & 1271808 & 0\\
\hline
$c_{3}^{3}$ & -126800 & 126800 & 126800 & 48\\
\hline
$c_{1}c_{2}c_{3}^{2}$& -423936 & 423936 & 423936 & 0 \\
\hline
$c_{1}^{3}c_{3}^{2}$ & -885888 & 885888 & 885888 & 0\\ 
\hline
$c_{2}^{3}c_{3}$ & -679698 & 679698 & 679698 & -6\\
\hline
$c_{1}^{2}c_{2}^{2}c_{3}$ & -1421280 & 1421280 & 1421280 & 0\\
\hline
$c_{1}^{6}c_{3}$ & -6230016 & 6230016 & 6230016 & 0\\
\hline 
$c_{1}c_{2}^{4}$ & -2280960 & 2280960 & 2280960 & 0\\
\hline
$c_{1}^{3}c_{2}^{3}$ & -4776192 & 4776192 & 4776192 & 0\\
\hline
$c_{1}^{5}c_{2}^{2}$ & -10008576 & 10008576 & 10008576 & 0\\
\hline
$c_{1}^{7}c_{2}$ & -20987904 & 20987904 & 20987904 & 0\\
\hline
$c_{1}^{9}$ & -44040192 & 44040192 & 44040192 & 0\\
\hline
\end{tabular}
\label{tabso21}
\end{table}

\begin{proposition}
The generalized flag manifold $SO(8)/U(1)\times U(3)$ admit at least $2$ invariant almost complex structures up to conjugation and equivalence. 
\end{proposition}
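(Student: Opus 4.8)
The plan is to separate the invariant almost complex structures by their Chern numbers, exactly as in the preceding proposition for $SO(6)/U(1)\times U(2)$. By Proposition \ref{numest} the three isotropy summands of $\mathbb{F}_{D}(4;1,3)$ yield $2^{3-1}=4$ structures up to conjugation, namely $J_{1}=(+,+,+)$, $J_{2}=(-,+,+)$, $J_{3}=(+,+,-)$ and $J_{4}=(+,-,+)$, whose Chern numbers are collected in Table \ref{tabso21}. It then suffices to produce two of these representatives that remain distinct even after we enlarge the identification to the relation generated by conjugation and biholomorphic equivalence.

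The invariant I would use is the vector of Chern numbers computed with respect to the structure's own complex orientation: it is unchanged by any biholomorphism, and also by passing to the conjugate structure, since in complex dimension $\dim_{\mathbb{C}}=9$ conjugation simultaneously reverses the orientation and multiplies each top-degree product of Chern classes by $(-1)^{9}$, the two signs cancelling. In particular, whether or not a given entry of this vector vanishes is an invariant of the class of the structure, and this is insensitive to the orientation with which Table \ref{tabso21} was computed, because a table entry equals the true invariant up to an overall sign and hence vanishes precisely when the invariant does. Reading off the table, $J_{4}$ has $c_{1}c_{8}=0$ while $J_{1}$ has $c_{1}c_{8}=-96\neq 0$; since a vanishing entry cannot be carried to a nonvanishing one, no composition of conjugations and biholomorphisms sends $J_{1}$ to $J_{4}$. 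Therefore $\mathbb{F}_{D}(4;1,3)$ carries at least two invariant almost complex structures up to conjugation and equivalence.

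The delicate point, and the reason for comparing precisely against $J_{4}$, is the orientation bookkeeping attached to conjugation: because the complex dimension is odd, the Chern numbers of $J_{i}$ and $\bar J_{i}$ read against a single fixed fundamental class differ by an overall sign, so a direct comparison of the nonzero columns of, say, $J_{1}$ and $J_{2}$ would first require certifying that the two structures induce the same orientation. Using an entry that vanishes for $J_{4}$ bypasses this issue entirely and yields the robust bound of two. The genuine limitation of the method is that the columns of $J_{2}=(-,+,+)$ and $J_{3}=(+,+,-)$ in Table \ref{tabso21} agree term by term, so the Chern numbers cannot decide whether these two structures are equivalent; settling this, and thereby upgrading the lower bound to an exact count, would require exhibiting an automorphism of the Cartan subalgebra $\got{h}$ that fixes the root system of $U(1)\times U(3)$ and carries one structure to the other, in the spirit of Proposition \ref{si}, which is not needed for the bound claimed here.
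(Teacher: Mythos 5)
Your proposal is correct and follows essentially the same route as the paper, which states this proposition immediately after Table \ref{tabso21} and (implicitly, as in the earlier $SU(n)$ and $SO(6)$ cases) concludes non-equivalence from the distinctness of the tabulated Chern numbers. Your only addition is to make explicit the conjugation/orientation bookkeeping and to use the sign-robust criterion ``$c_{1}c_{8}=0$ for $J_{4}$ versus $c_{1}c_{8}\neq 0$ for $J_{1}$,'' which is a careful justification of exactly the cautious bound (two, not three) that the paper claims.
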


Applying Hirzebruch-Riemann-Roch Theorem we have
$$
\begin{array}{ccl}
\sum_{q=0}^{9}(-1)^{q}h^{0,q}&=&(1/7257600)(-3c_{1}^{7}c_{2}+21c_{1}^{5}c_{2}^{2}-42c_{1}^{3}c_{2}^{3}+26c_{1}^{3}c_{2}c_{4}+3c_{1}^{6}c_{3}\\
&-&13c_{1}^{2}c_{3}c_{4}-3c_{1}^{5}c_{4}+21c_{1}c_{2}^{4}-34c_{1}c_{2}^{2}c_{4}+5c_{1}c_{4}^{2}+
3c_{1}^{4}c_{5}\\
&-&29c_{1}^{4}c_{2}c_{3}+50c_{1}^{2}c_{2}^{2}c_{3}+8c_{1}^{3}c_{3}^{2}-8c_{1}c_{2}c_{3}^{2}-
16c_{1}^{2}c_{2}c_{5}+3c_{1}c_{3}c_{5}\\
&-&3c_{1}^{3}c_{6}+13c_{1}c_{2}c_{6}+3c_{1}^{2}c_{7}-3c_{1}c_{8}).
\end{array}
$$

Since the cohomology of $(\mathbb{F}_{D}(4;1,3),J)$ (with $J$ integrable) is of type $(p,p)$, $\sum_{q=0}^{9}(-1)^{q}h^{0,q}=1$ it holds the following relation between the Chern numbers:

$$
\begin{array}{ccl}
7257600&=&-3c_{1}^{7}c_{2}+21c_{1}^{5}c_{2}^{2}-42c_{1}^{3}c_{2}^{3}+26c_{1}^{3}c_{2}c_{4}+
3c_{1}^{6}c_{3}-13c_{1}^{2}c_{3}c_{4}-3c_{1}^{5}c_{4}\\&+&
21c_{1}c_{2}^{4}-34c_{1}c_{2}^{2}c_{4}+5c_{1}c_{4}^{2}+
3c_{1}^{4}c_{5}-29c_{1}^{4}c_{2}c_{3}+50c_{1}^{2}c_{2}^{2}c_{3}
+8c_{1}^{3}c_{3}^{2}\\&-&8c_{1}c_{2}c_{3}^{2}-
16c_{1}^{2}c_{2}c_{5}+3c_{1}c_{3}c_{5}-3c_{1}^{3}c_{6}+13c_{1}c_{2}c_{6}
+3c_{1}^{2}c_{7}-
3c_{1}c_{8}.
\end{array}
$$

\subsubsection{The flag manifold $\mathbb{F}_{D}(4;4)=SO(8)/U(4)$}

Consider the 12-dimensional generalized flag manifold $\mathbb{F}_{D}(4;4)=SO(8)/U(4)$ with Euler characteristic 8.

According \cite{livro}, a Cartan sub-algebra $\got{h}$ of $\got{so}(8)$ is given by 
$$
\got{h}=\left(\begin{array}{cc}
\Lambda&\\
&-\Lambda
\end{array}\right), \ \ \mbox{where} \ \ \Lambda=\{\diag(x,y,z,w):x,y,z,w\in \mathbb{C}\}.
$$
The set of positive roots of $\mathbb{F}_{D}(4;4)$ is
$$
\Pi^{+}=\{x+y,x+z,x+w,y+z,y+w,z+w\}.
$$

Consider the following sub-algebra $\got{t}$ of $\got{h}$ given by 
$$
\got{t}=\diag\{d_{1},d_{1},d_{1},d_{1},-d_{1},-d_{1},-d_{1},-d_{1}\}.
$$
The restriction of the roots $\Pi$ to $\got{t}$ is the set
$$
R_{T}=\{\pm 2d_{1}\}
$$
and consequently the isotropy representation of $\mathbb{F}_{D}(4;4)$ decompose in just one component
$$
\got{m}=\got{m}_{\pm (2d_{1})},
$$
that is, $\mathbb{F}_{D}(4;4)$ is isotropically irreducible. Actually, $\mathbb{F}_{D}(4;4)$ is a Hermitian symmetric space.

According the proposition \ref{numest}, $\mathbb{F}_{D}(4;4)$ admits just one invariant almost complex structure, up to conjugation.

Let us compute the Chern number of $\mathbb{F}_{D}(4;4)=SO(8)/U(4)$ with respect to this invariant complex structure. Consider the cohomology ring 
$$
H^{*}(\mathbb{F}_{D}(4;4),\mathbb{R})=\dfrac{\mathbb{R}[x,y,z,w]}{\langle s_{1},s_{2},s_{3},e_{4}\rangle},
$$
where $s_{k}=x^{2k}+y^{2k}+z^{2k}+w^{2k}$ and $e_{4}=xyzw$,  with $x,y,z,w$ in degree $2$ and $k=1,2,3$.

The Chern numbers of $(SO(8)/U(4),J)$, where $J$ is the canonical complex structure, are listed in the next table. 
$$
\begin{array}{|c|}
\hline
\mbox{Chern numbers of} \ (\mathbb{F}_{D}(4;4),J) \ \mbox{com} \ J=(+)\\
\hline
\hline
\begin{array}{lccccccccl}
c_{6}=8&&&&&&&&&c_{1}c_{5}=144 \\ \hline
c_{2}c_{4}=704&&&&&&&&&c_{1}^{2}c_{4}=1584\\ \hline
c_{3}^{2}=1152&&&&&&&&&c_{1}c_{2}c_{3}=4608\\ \hline
c_{1}^{3}c_{3}=10368&&&&&&&&&c_{2}^{3}=8192\\ \hline
c_{1}^{2}c_{2}^{2}=18432&&&&&&&&&c_{1}^{4}c_{2}=41472\\ \hline
c_{1}^{6}=93312&&&&&&&&\\
\end{array}\\
\hline
\end{array}
$$

According to the Hirzebruch-Riemann-Roch Theorem the Chern numbers must satisfy the relation:
$$
\begin{array}{ccl}
\sum_{q=1}^{5}(-1)^{5}h^{0,q}&=&(1/60480)(2c_{6}-2c_{1}c_{5}-9c_{2}c_{4}-5c_{1}^{2}c_{4}-c_{3}^{2}+11c_{1}c_{2}c_{3}+5c_{1}^{3}c_{3}\\
&+&10c_{2}^{3}+11c_{1}^{2}c_{2}^{2}-12c_{1}^{4}c_{2}+2c_{1}^{6}).
\end{array}
$$
Therefore,
$$
\begin{array}{ccl}
60480&=&2c_{6}-2c_{1}c_{5}-9c_{2}c_{4}-5c_{1}^{2}c_{4}-c_{3}^{2}+11c_{1}c_{2}c_{3}+5c_{1}^{3}c_{3}\\
&+&10c_{2}^{3}+11c_{1}^{2}c_{2}^{2}-12c_{1}^{4}c_{2}+2c_{1}^{6}.
\end{array}
$$

\subsection{Generalized flag manifolds of classical Lie groups $SO(2n+1)$ and $Sp(n)$} \label{sec3}
In this section we compute the Chern numbers of some generalized flag manifolds of the classical Lie groups $SO(2n+1)$ and $Sp(n)$. 

\subsubsection{The flag manifold $\mathbb{F}_{B}(2;1,1)=SO(5)/T$}

Consider the full flag manifold $\mathbb{F}_{B}(2;1,1)=SO(5)/T$ with real dimension $8$ and Euler characteristic $\chi(\mathbb{F}_{B}(2;1,1))=\vert \mathcal{W}_{SO(5)}\vert= 2^{2}.2=8$, where $\vert \mathcal{W}_{SO(5)}\vert$ is the order of the Weyl group of $SO(5)$.

Let $\got{h}=\left(\begin{array}{ccc}
0&&\\
&\Lambda&\\
&&-\Lambda
\end{array}\right)$, with $\Lambda=\left(\begin{array}{cc}
x&\\
&y
\end{array}\right)$, be a Cartan sub-algebra of $\got{so}(5)$.

Consider the following maps 
$$
\lambda_{1}:\Lambda=\diag\{x,y\}\longrightarrow x \ \ and \ \ 
\lambda_{2}:\Lambda=\diag\{x,y\}\longrightarrow y.
$$

The roots of $\mathbb{F}_{B}(2;1,1)$ are described as follow: $\alpha_{1}=\lambda_{1}$, $\alpha_{2}=\lambda_{2}$, $\alpha_{1}-\alpha_{2}=\lambda_{1}-\lambda_{2}$ and $\alpha_{1}+\alpha_{2}=\lambda_{1}+\lambda_{2}$. Therefore the isotropy representation of $\mathbb{F}_{B}(2;1,1)$ admits $4$ isotropy summands:
$$
\begin{array}{ccl}
\got{m}&=&\got{m}_{1}\oplus \got{m}_{2}\oplus \got{m}_{3}\oplus \got{m}_{4}\\
&=&\got{g}_{\pm\alpha_{1}}\oplus \got{g}_{\pm\alpha_{2}}\oplus \got{g}_{\pm(\alpha_{1}-\alpha_{2})}\oplus \got{g}_{\pm(\alpha_{1}+\alpha_{2})}. 
\end{array}
$$

By proposition \ref{numest}, $\mathbb{F}_{B}(2;1,1)$ has $2^3=8$ invariant almost complex structures up conjugation. The canonical complex structure $J$ is represented by $J=(+,+,+,+)$.


The Chern classes of $\mathbb{F}_{B}(2;1,1)$ are given by
$$
\begin{array}{lccccccccl}
c_{1}(\mathbb{F}_{B}(2;1,1))=3x+y&&&&&&&&&
c_{2}(\mathbb{F}_{B}(2;1,1))=3xy-4y^{2}\\
c_{3}(\mathbb{F}_{B}(2;1,1))=-2(xy^{2}+2y^{3})&&&&&&&&&
c_{4}(\mathbb{F}_{B}(2;1,1))=-2xy^{3}.
\end{array}
$$

The Chern numbers of $\mathbb{F}_{B}(2;1,1)$ are given in the next table:
$$
\begin{array}{|c|}
\hline
\mbox{Chern numbers of } \ (\mathbb{F}_{B}(2;1,1),J)\\
\hline
\hline
\begin{array}{lccccccccl}
c_{1}^{4}=-384&&&&&&&&&c_{2}^2=-96\\ \hline
c_{1}^{2}c_{2}=-192&&&&&&&&&c_{4}=-8\\ \hline
c_{1}c_{3}=-56.&&&&&&&&&\\ \hline
\end{array}\\
\hline
\end{array}
$$

The relations between the Chern numbers are given by the Hirzebruch-Riemann-Roch Theorem
$$
\begin{array}{ccl}
720&=&-c_{4}+c_{1}c_{3}+3c_{2}^{2}+4c_{1}^{2}c_{2}-c_{1}^{4}\\
&=&8-56+3(-96)+4(-192)+384\\
&=&-1112+392\\
&=&720.
\end{array}
$$

\subsubsection{The flag manifold $\mathbb{F}_{B}(3;3)=SO(7)/U(3)$}

Let $\mathbb{F}_{B}(3;3)=SO(7)/U(3)$ be a 12-dimensional flag manifold with Euler characteristic $\chi(\mathbb{F}_{B}(3;3))=2^{3}3!/3!=2^{3}=8$.

Let $\got{h}=\left(\begin{array}{ccc}
0&&\\
&\Lambda&\\
&&-\Lambda
\end{array}\right)$, with $\Lambda=\left(\begin{array}{ccc}
x&&\\
&y&\\
&&z
\end{array}\right)$, be a Cartan sub-algebra of $\got{so}(7)$.

Consider the linear functionals given by 
$$
\begin{array}{l}
\lambda_{1}:\Lambda=\diag\{x,y,z\}\longrightarrow x,\\ 
\lambda_{2}:\Lambda=\diag\{x,y,z\}\longrightarrow y,\\
\lambda_{3}:\Lambda=\diag\{x,y,z\}\longrightarrow z.
\end{array}
$$
The positive complementary roots of  $\mathbb{F}_{B}(3;3)$ are the following: $\alpha_{1}=\lambda_{1}$, $\alpha_{2}=\lambda_{2}$, $\alpha_{3}=\lambda_{3}$, $\alpha_{4}=\lambda_{1}+\lambda_{2}$, $\alpha_{5}=\lambda_{1}+\lambda_{3}$ and $\alpha_{6}=\lambda_{2}+\lambda_{3}$. 

Let $\got{t}$ be the sub-algebra defined by
$
\got{t}=\left(\begin{array}{ccc}
0&&\\
&\Lambda&\\
&&-\Lambda
\end{array}\right)$, with $\Lambda=\left(\begin{array}{ccc}
d_{1}&&\\
&d_{1}&\\
&&d_{1}
\end{array}\right).
$
The set of T-roots is given by, 
$$
R_{T}=\{\pm d_{1},\pm 2d_{1}\}.
$$ 
By proposition  \ref{142} we have
$$
\begin{array}{ccl}
\got{m}&=&\got{m}_{1}\oplus \got{m}_{2}\\
&=&\got{m}_{\pm 2d_{1}}\oplus \got{m}_{\pm d_{1}}.
\end{array}
$$

By proposition \ref{numest}, $\mathbb{F}_{B}(3;3)$ admits 2 invariant almost complex structures, up to conjugation.

The cohomology ring of $\mathbb{F}_{B}(3;3)$ is given by
$$
H^{*}(\mathbb{F}_{B}(3;3),\mathbb{R})=\dfrac{R[x,y,z]}{\langle s_{1},s_{2},s_{3}\rangle},
$$
where $s_{k}=x^{2k}+y^{2k}+z^{2k}$,  with $x,y,z$ in degree $2$ and $k=1,2,3$.

The Chern numbers of $(\mathbb{F}_{B}(3;3),J)$, where $J$ is the canonical almost complex structure are given by:
$$
\begin{array}{|c|}
\hline
\mbox{Chern numbers of} \ (\mathbb{F}_{B}(3;3),J)\\
\hline
\hline
\begin{array}{cccccccccc}
c_{1}c_{5}=144&&&&&&&&&c_{1}^2c_{2}^{2}=18432\\
c_{1}^{2}c_{4}=1584&&&&&&&&&c_{1}c_{2}c_{3}=4608\\
c_{1}^{3}c_{3}=10368&&&&&&&&&c_{2}^{3}=8192\\
c_{1}^{4}c_{2}=41472&&&&&&&&&c_{2}c_{4}=704\\
c_{1}c_{2}^{2}=6144&&&&&&&&&c_{3}^{2}=1152\\
c_{6}=8&&&&&&&&&\\
\end{array}\\
\hline
\end{array}.
$$

The relations between the Chern numbers are given by the Hirzebruch-Riemann-Roch Theorem 

$$
\begin{array}{ccl}
60480&=&2c_{6}-2c_{1}c_{5}-9c_{2}c_{4}-5c_{1}^{2}c_{4}-c_{3}^{2}+11c_{1}c_{2}c_{3}+5c_{1}^{3}c_{3}\\&+&10c_{2}^{3}+11c_{1}^{2}c_{2}^{2}-12c_{1}^{4}c_{2}+2c_{1}^{6}\\
\end{array}
$$

\subsubsection{The flag manifold  $\mathbb{F}_{C}(2;1,1)=Sp(2)/T$}

Consider the 8-dimensional flag manifold  $\mathbb{F}_{C}(2;1,1)=Sp(2)/T$ with Euler characteristic $\chi(\mathbb{F}_{C}(2;1,1))=\vert \mathcal{W}_{SO(5)}\vert= 2^{2}.2=8$.


Let  $\got{h}=\left(\begin{array}{cc}
\Lambda&\\
&-\Lambda
\end{array}\right)$, with $\Lambda=\left(\begin{array}{cc}
x&\\
&y
\end{array}\right)$, be a Cartan sub-algebra of $\got{sp}(2)$.

Consider the linear functionals 
$$
\lambda_{1}:\Lambda=\diag\{x,y\}\longrightarrow x \ \ and \ \ 
\lambda_{2}:\Lambda=\diag\{x,y\}\longrightarrow y.
$$
The roots of  $\mathbb{F}_{C}(2;1,1)$ are given by: $2\alpha_{1}=2\lambda_{1}$, $2\alpha_{2}=2\lambda_{2}$, $\alpha_{1}-\alpha_{2}=\lambda_{1}-\lambda_{2}$ and $\alpha_{1}+\alpha_{2}=\lambda_{1}+\lambda_{2}$. Therefore the isotropy representation of $\mathbb{F}_{C}(2;1,1)$ admits $4$ isotropy summands:
$$
\begin{array}{ccl}
\got{m}&=&\got{m}_{1}\oplus \got{m}_{2}\oplus \got{m}_{3}\oplus \got{m}_{4}\\
&=&\got{g}_{\pm 2\alpha_{1}}\oplus \got{g}_{\pm 2\alpha_{2}}\oplus \got{g}_{\pm(\alpha_{1}-\alpha_{2})}\oplus \got{g}_{\pm(\alpha_{1}+\alpha_{2})}. 
\end{array}
$$

By proposition \ref{numest} $\mathbb{F}_{C}(2;1,1)$ admits $2^{3}=8$ invariant almost complex structures, up to conjugation. Let $J$ be the canonical invariant complex structure, that is, $J=(+,+,+,+)$.

Consider the cohomology ring of $\mathbb{F}_{C}(2;1,1)$ 
$$
H^{*}(\mathbb{F}_{C}(2;1,1),\mathbb{R})=\dfrac{R[x,y]}{\langle s_{1},s_{2}\rangle},
$$
where $s_{k}=x^{2k}+y^{2k}$,  with $x,y$ in degree $2$ and $k=1,2$.

The Chern classes of  $\mathbb{F}_{C}(2;1,1)$ are given by
$$
\begin{array}{lccccccccl}
c_{1}(M)=4x+2y&&&&&&&&&
c_{2}(M)=8xy-6y^{2}\\
c_{3}(M)=-4(xy^{2}+3y^{3})&&&&&&&&&
c_{4}(M)=-8xy^{3}.
\end{array}
$$


The Chern numbers of  $\mathbb{F}_{C}(2;1,1)$ are listed in the following table:
$$
\begin{array}{|c|}
\hline
\mbox{Chern numbers of } \ (\mathbb{F}_{C}(2;1,1),J)\\
\hline
\hline
\begin{array}{cccccccccc}
c_{1}^{4}=-384&&&&&&&&&c_{2}^2=-96\\
c_{1}^{2}c_{2}=-192&&&&&&&&&c_{4}=-8\\
c_{1}c_{3}=-56.&&&&&&&&&\\
\end{array}\\
\hline
\end{array}.
$$
\begin{remark}
Note that Chern numbers of $SO(5)/T$ and $Sp(2)/T$ are the same, but the Chern classes of these two manifolds are different. 
\end{remark}

\subsubsection{The flag manifold $\mathbb{F}_{C}(3;1,1,1)=Sp(3)/T$}

Consider the 18-dimensional full flag manifold $\mathbb{F}_{C}(3;1,1,1)=Sp(3)/T$ with Euler characteristic $\chi(\mathbb{F}_{C}(3;1,1,1))=2^{3}3!=48$. 

Let $\got{h}=\left(\begin{array}{cc}
\Lambda&\\
&-\Lambda
\end{array}\right)$, with $\Lambda=\left(\begin{array}{ccc}
x&&\\
&y&\\
&&z
\end{array}\right)$, be a Cartan sub-algebra of $\got{sp}(3)$.

Consider the linear functionals given by
$$
\begin{array}{l}
\lambda_{1}:\Lambda=\diag\{x,y,z\}\longrightarrow x,\\
\lambda_{2}:\Lambda=\diag\{x,y,z\}\longrightarrow y,\\
\lambda_{3}:\Lambda=\diag\{x,y,z\}\longrightarrow z.
\end{array}
$$
The roots of $\mathbb{F}_{C}(3;1,1,1)$ are given by: $2\alpha_{1}=2\lambda_{1}$, $2\alpha_{2}=2\lambda_{2}$, $2\alpha_{3}=2\lambda_{3}$, $\alpha_{1}-\alpha_{2}=\lambda_{1}-\lambda_{2}$, $\alpha_{1}-\alpha_{3}=\lambda_{1}-\lambda_{3}$, $\alpha_{2}-\alpha_{3}=\lambda_{2}-\lambda_{3}$, $\alpha_{1}+\alpha_{2}=\lambda_{1}+\lambda_{2}$, $\alpha_{1}+\alpha_{3}=\lambda_{1}+\lambda_{3}$ and $\alpha_{2}+\alpha_{3}=\lambda_{2}+\lambda_{3}$. The isotropy representation of $\mathbb{F}_{C}(3;1,1,1)$ admits 9 isotropy summands:
$$
\begin{array}{ccl}
\got{m}&=&\got{m}_{1}\oplus \got{m}_{2}\oplus \got{m}_{3}\oplus \got{m}_{4}\oplus\got{m}_{5}\oplus \got{m}_{6}\oplus \got{m}_{7}\oplus \got{m}_{8}\oplus \got{m}_{9}\\
&=&\got{g}_{\pm 2\alpha_{1}}\oplus \got{g}_{\pm 2\alpha_{2}}\oplus \got{g}_{\pm(2\alpha_{3})}\oplus \got{g}_{\pm(\alpha_{1}-\alpha_{2})}\oplus \got{g}_{\pm(\alpha_{1}-\alpha_{3})}\oplus \got{g}_{\pm(\alpha_{2}-\alpha_{3})}\\
&\oplus & \got{g}_{\pm(\alpha_{1}+\alpha_{2})}\oplus \got{g}_{\pm(\alpha_{1}+\alpha_{3})}\oplus \got{g}_{\pm(\alpha_{2}+\alpha_{3})}. 
\end{array}
$$
Therefore $\mathbb{F}_{C}(3;1,1,1)$ admits $2^{8}$ invariant almost complex structures up to conjugation. Let us consider the canonical invariant almost complex structure $J=(+,+,+,+,+,+,+,+,+)$ and compute their Chern numbers.

The cohomology ring of $\mathbb{F}_{C}(3;1,1,1)$ is given by
$$
H^{*}(\mathbb{F}_{C}(3;1,1,1),\mathbb{R})=\dfrac{R[x,y,z]}{\langle s_{1},s_{2},s_{3}\rangle},
$$
where $s_{k}=x^{2k}+y^{2k}+z^{2k}$,  with $x,y,z$ in degree $2$ and $k=1,2,3$.

The Chern numbers of $\mathbb{F}_{C}(3;1,1,1)$ are listed in the table \ref{tabsp31}.

\begin{table}[h] 

\centering

\caption{Chern numbers of $(Sp(3)/T,\, J)$ $J$ invariant complex structure} 

\begin{tabular}{|c|c|c|} 
\hline 

$(+,+,+)$& & $(+,+,+)$\\ 

\hline
\hline
$c_{9}=-48$ &  & $c_{1}c_{4}^{2}=-270208$ \\
\hline
$c_{1}c_{8}=-1056$ & & $c_{2}c_{3}c_{4}=-578480$\\
\hline
$c_{2}c_{7}=-7696$ &  & $c_{1}^{2}c_{3}c_{4}=-1156960$\\
\hline
$c_{1}^{2}c_{7}=-15392$ &  & $c_{1}c_{2}^{2}c_{4}=-1773504$\\
\hline
$c_{3}c_{6}=-26096$ &  & $c_{1}^{3}c_{2}c_{4}=-3547008$\\
\hline
$c_{1}c_{2}c_{6}=-80272$ &  & $c_{1}^{5}c_{4}=-7094016$\\
\hline
$c_{1}^{3}c_{6}=-160544$ & & $c_{3}^{3}=-807072$\\
\hline
$c_{4}c_{5}=-46768$ &  & $c_{1}c_{2}c_{3}^{2}=-2473376$\\
\hline
$c_{1}c_{3}c_{5}=-201040$ &  & $c_{1}^{3}c_{3}^{2}=-4946752$\\
\hline
$c_{2}^{2}c_{5}=-308544$ &  & $c_{2}^{3}c_{3}=-3789792$\\
\hline
$c_{1}^{2}c_{2}c_{5}=-617088$ &  & $c_{1}^{2}c_{2}^{2}c_{3}=-7579584$ \\
\hline
$c_{1}^{4}c_{2}c_{3}=-15159168$ &  & $c_{1}^{6}c_{3}=-30318336$\\
\hline
$c_{1}^{4}c_{5}=-1234176$ &  & $c_{1}c_{2}^{4}=-11612160$\\
\hline
$c_{1}^{3}c_{2}^{3}=-23224320$ &  & $c_{1}^{7}c_{2}=-92897280$\\
\hline
$c_{1}^{5}c_{2}^{2}=-46448640$ &  & $c_{1}^{9}=-185794560$.\\
\hline
\end{tabular}
\label{tabsp31}
\end{table}

The relations between the Chern numbers are given by the Hirzebruch-Riemann-Roch Theorem: 
$$
\begin{array}{ccl}
7257600&=&-3c_{1}^{7}c_{2}+21c_{1}^{5}c_{2}^{2}-42c_{1}^{3}c_{2}^{3}+26c_{1}^{3}c_{2}c_{4}+
3c_{1}^{6}c_{3}-13c_{1}^{2}c_{3}c_{4}-3c_{1}^{5}c_{4}\\&+&
21c_{1}c_{2}^{4}-34c_{1}c_{2}^{2}c_{4}+5c_{1}c_{4}^{2}+
3c_{1}^{4}c_{5}-29c_{1}^{4}c_{2}c_{3}+50c_{1}^{2}c_{2}^{2}c_{3}
+8c_{1}^{3}c_{3}^{2}\\&-&8c_{1}c_{2}c_{3}^{2}-
16c_{1}^{2}c_{2}c_{5}+3c_{1}c_{3}c_{5}-3c_{1}^{3}c_{6}+13c_{1}c_{2}c_{6}
+3c_{1}^{2}c_{7}-
3c_{1}c_{8}.
\end{array}
$$

\subsection{Generalized flag manifolds of the exceptional Lie group $G_{2}$} \label{sec4}

We start with some basic facts about the Lie algebra of $G_2$.  

Consider the Cartan sub-algebra  $\got{h}$ of $\got{g}_2$ as a sub-algebra of the diagonal matrix of  $\got{sl}(3)$ and let $\lambda_{i}$ be the functional of $\got{h}$ defined by 
$$
\lambda_{i}: \diag\{a_{1},a_{2},a_{3}\}\longmapsto a_{i}.
$$

The simple roots of $G_{2}$ are $\alpha_{1}=\lambda_{1}-\lambda_{2}$ and $\alpha_{2}=\lambda_{2}$. The set of positive roots are given by  
$$
\Pi^{+}=\{\alpha_{1},\alpha_{2},\alpha_{1}+\alpha_{2},\alpha_{1}+2\alpha_{2},\alpha_{1}+3\alpha_{2},2\alpha_{1}+3\alpha_{2}\}.
$$

The maximal root of $G_{2}$ is $\mu=2\alpha_{1}+3\alpha_{2}$. For more details about the Lie algebra structure of $\mathfrak{g}_2$ see \cite{livro}. Therefore we have three flag manifolds associated to the Lie group $G_{2}$: 

\begin{enumerate}
\item $\mathbb{F}_{G}=G_2/T$, the full flag manifold. This manifold has $6$ isotropy summands.
\item $\mathbb{F}_{G_{\alpha_{1}}}=G_{2}/U(2)$ defined in terms of simple roots by $\Theta=\Sigma-\{\alpha_{1}\}$. This flag manifold has $2$ isotropy summands;

\item $\mathbb{F}_{G_{\alpha_{2}}}=G_{2}/U(2)$ defined in terms of simple roots by $\Theta=\Sigma-\{\alpha_{2}\}$.This flag manifold has $3$ isotropy summands.
\end{enumerate}

We will compute in the next sections the Chern numbers of $(\mathbb{F}_{G},J)$ (where $J$ is the canonical invariant complex structure), $\mathbb{F}_{G_{\alpha_{1}}}$ and $\mathbb{F}_{G_{\alpha_{2}}}$.

\subsubsection{The full flag manifold $\mathbb{F}_{G}=G_{2}/T$}

Let us consider the 12-dimensional full flag manifold $\mathbb{F}_{G}=G_{2}/T$. The Euler characteristic of $\mathbb{F}_{G}$ is $\chi(M)=\vert \mathcal{W}_{G_{2}}\vert=12$, where $\vert \mathcal{W}_{G_{2}}\vert$ is the order of the Weyl group of $G_{2}$.

The decomposition of the tangent space at the origin is given by
$$
\begin{array}{ccl}
\got{m}&=&\got{m_{1}}\oplus\got{m_{2}}\oplus\got{m_{3}}\oplus\got{m_{4}}\oplus\got{m_{5}}\oplus\got{m_{6}}\\
&=&\got{g}_{\alpha_{1}}\oplus\got{g}_{\alpha_{2}}\oplus\got{g}_{\alpha_{1}+\alpha_{2}}\oplus\got{g}_{\alpha_{1}+2\alpha_{2}}\oplus\got{g}_{\alpha_{1}+3\alpha_{2}}\oplus\got{g}_{2\alpha_{1}+3\alpha_{2}}.
\end{array}
$$
According to proposition \ref{numest}, $\mathbb{F}_{G}$ admits $2^{5}=32$ invariant almost complex structures up to conjugation. Furthermore, $\mathbb{F}_{G}$ has a canonical invariant complex structure compatible with the K\"ahler-Einstein metric, see \cite{grego2}.

Let us compute the Chern numbers of $(\mathbb{F}_{G},J)$, where $J$ is the canonical invariant complex structure.

The cohomology ring of $\mathbb{F}_{G}$:
$$
H^{*}(\mathbb{F}_{G},\mathbb{R})=\dfrac{\mathbb{R}[x,y,z]}{\langle s_{1},s_{2},s_{3}\rangle},
$$
where $s_{1}=x+y+z$, $s_{2}=x^{2}+y^{2}+z^{2}$ and $s_{3}=x^{6}+y^{6}+z^{6}$.

In the next table we compute the Chern numbers of $(\mathbb{F}_{G},J)$.

$$
\begin{array}{|c|}
\hline
\mbox{Chern numbers of } \ (\mathbb{F}_{G},J)\\ 
\begin{array}{lccccccl}
\hline
\hline
c_{6}=12& &&&&&& c_{1}^{6}=46080 \\
\hline
c_{1}c_{5}=192&&&&&&& c_{1}c_{2}c_{3}=3632 \\
\hline
c_{1}^{2}c_{4}=1504 &&&&&&& c_{1}^{2}c_{2}^{2}=11520 \\
\hline
c_{1}^{3}c_{3}=7264 &&&&&& & c_{1}^{4}c_{2}=23040 \\
\hline
c_{2}c_{4}=752 &&&&&&& c_{3}^{2}=1144 \\
\hline
c_{2}^{3}c_{2}=5760 &&&&&& & \\ 
\end{array}\\
\hline
\end{array}.
$$

According to the Hirzebruch-Riemann-Roch Theorem the Chern numbers must satisfy the following relations:
$$
\begin{array}{ccl}
60480&=&2c_{6}-2c_{1}c_{5}-9c_{2}c_{4}-5c_{1}^{2}c_{4}-c_{3}^{2}+11c_{1}c_{2}c_{3}+5c_{1}^{3}c_{3}\\
&+&10c_{2}^{3}+11c_{1}^{2}c_{2}^{2}-12c_{1}^{4}c_{2}+2c_{1}^{6}.
\end{array}
$$

%
%

\subsubsection{The flag manifold $\mathbb{F}_{G_{\alpha_{2}}}=G_{2}/U(2)$ with $3$ isotropic summands}

Consider the 10-dimensional flag manifold $\mathbb{F}_{G_{\alpha_{2}}}=G_{2}/U(2)$, where $\alpha_2$ is the long root of $G_2$. The Euler characteristic of this manifold is $\chi(\mathbb{F}_{G_{\alpha_{2}}})=12/2=6$.

The flag manifold $G_{\alpha_{2}}=G_{2}/U(2)$ is defined using the set of simple roots $\Theta=\Sigma-\{\alpha_{2}\}$ and has 3 isotropy summands $\got{m}_{1}$, $\got{m}_{2}$ and $\got{m}_{3}$ given by 
$$
\begin{array}{l}
\got{m}_{1}=R^{+}(\alpha_{2},1)=\{\alpha_{1}+\alpha_{2},\alpha_{2}\},\\
\got{m}_{2}=R^{+}(\alpha_{2},2)=\{\alpha_{1}+2\alpha_{2}\},\\
\got{m}_{3}=R^{+}(\alpha_{2},3)=\{\alpha_{1}+3\alpha_{2},2\alpha_{1}+3\alpha_{2}\}.
\end{array}
$$
According to proposition \ref{numest} the flag manifold $\mathbb{F}_{G_{\alpha_{2}}}$ admits $4$ invariant almost complex structures, up to conjugation:
$$
\begin{array}{ccccccc}
J_{1}=(+,+,+)&&&&&&J_{3}=(-,+,-)\\
J_{2}=(-,+,+)&&&&&&J_{4}=(+,+,-).
\end{array}
$$
By lemma \ref{order}, only $J_{1}$ is integrable.

The cohomology ring of $\mathbb{F}_{G_{\alpha_{2}}}$ is given by 
$$
H^{*}(\mathbb{F}_{G_{\alpha_{2}}},\mathbb{R})=\dfrac{\mathbb{R}[x,y,z]}{\langle s_{1},s_{2},s_{3}\rangle},
$$
where $s_{1}=x+y+z$, $s_{2}=x^{2}+y^{2}+z^{2}$ and $s_{3}=x^{6}+y^{6}+z^{6}$.

Therefore the top Chern class is given by: $c_{5}(\mathbb{F}_{G_{\alpha_{2}}})=-6z^{5}$.

The Chern numbers of the almost complex manifolds $(\mathbb{F}_{G_{\alpha_{2}}},J_{i})$, $i=1,2,3,4$, are given in the table \ref{tabg21}.

\begin{table}[h] 

\centering

\caption{Chern numbers of $G_{2}/U(2)$ with 3 isotropic summands} 

\begin{tabular}{|c|c|c|c|c|} 

\hline 

&$(+,+,+)$ & $(-,+,+)$ & $(-,+,-)$ & $(+,+,-)$\\ 

\hline
\hline

$c_{5}$& -6 & -6 & -6 & -6\\
\hline
$c_{1}^{5}$& -6250 & -486 &+486 & 2 \\
\hline
$c_{1}^{3}c_{2}$ & -2750 & -162 & +162 & -2\\ 
\hline
$c_{1}^{2}c_{3}$ & -650 & -9 & -9 & -2\\
\hline
$c_{1}c_{4}$ & -90 & -18 & -18 & 6\\
\hline
$c_{1}c_{2}^{2}$ & -1210 & -54 & +54 & 2\\
\hline 
$c_{2}c_{3}$ & -286 & -6 & -6 & 2\\
\hline
\end{tabular}
\label{tabg21}
\end{table}

According to the Hirzebruch-Riemann-Roch Theorem the Chern numbers of $(\mathbb{F}_{G_{\alpha_{2}}},J_{1})$ must satisfy the following relations: 
%
$$
1440=-c_{1}c_{4}+c_{1}^{2}c_{3}+3c_{1}c_{2}^{2}-c_{1}^{3}c_{2}.
$$

%

Analysing the table \ref{tabg21} we have the following result:

\begin{proposition}
The generalized flag manifold $\mathbb{F}_{G_{\alpha_{2}}}=G_{2}/U(2)$ admits at least $3$ invariant almost complex structures, up to conjugation and equivalence.
\end{proposition}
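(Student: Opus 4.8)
The plan is to use the Chern numbers recorded in Table \ref{tabg21} as invariants that separate the four invariant almost complex structures into at least three classes under the combined relation of conjugation and equivalence. By Proposition \ref{numest}, since the isotropy representation of $\mathbb{F}_{G_{\alpha_2}}$ splits into $s=3$ summands, there are exactly $2^{s-1}=4$ invariant almost complex structures up to conjugation, namely $J_1,J_2,J_3,J_4$, and it remains only to bound from below the number of these that are pairwise inequivalent.

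The first step is to note that the top-degree Chern numbers are invariants of the almost complex structure on $(TM,J)$, hence are preserved under any equivalence, and then to pin down their behaviour under conjugation. Since $\mathbb{F}_{G_{\alpha_2}}$ has complex dimension $n=5$, the conjugate structure satisfies $c_k(\bar J)=(-1)^k c_k(J)$; therefore every top Chern number, being a monomial $c_{i_1}\cdots c_{i_r}$ with $i_1+\cdots+i_r=5$, is multiplied by $(-1)^5=-1$ under $J\mapsto\bar J$. Consequently $|c_1^5|$ — and more generally the entire vector of top Chern numbers taken up to one global sign — is an invariant of the combined relation: if $J_i$ is equivalent either to $J_j$ or to $\bar J_j$, then $c_1^5(J_i)=\pm\,c_1^5(J_j)$, so $|c_1^5(J_i)|=|c_1^5(J_j)|$.

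The second step is purely to read off $c_1^5$ from Table \ref{tabg21}, namely $-6250,\ -486,\ +486,\ 2$ for $J_1,J_2,J_3,J_4$, whence $|c_1^5|$ attains the three distinct values $6250,\ 486,\ 2$. By the invariance just established, structures lying in the same class under conjugation and equivalence must share the same $|c_1^5|$; the three distinct values therefore force at least three classes, realized for instance by $J_1$, $J_4$ and either of $J_2,J_3$. This proves the proposition.

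I expect the only delicate point to be the bookkeeping of the conjugation action on Chern numbers: getting the global sign $(-1)^n$ correct, and hence recognizing that in odd complex dimension one must compare Chern numbers only \emph{up to sign}. The computation of the Chern numbers themselves, which is the genuine labor, is already contained in Table \ref{tabg21} and in the Borel--Hirzebruch description of $\mathbb{F}_{G_{\alpha_2}}$, so I would not reproduce it. I would finally remark that the coincidence $|c_1^5|=486$ for both $J_2$ and $J_3$ is precisely why only ``at least three'' is asserted from $c_1^5$ alone; however, a finer invariant separates them as well, since $c_1^2 c_3=-9$ for \emph{both} $J_2$ and $J_3$, whereas a conjugate pair would require these values to be negatives of one another, so $J_2$ and $J_3$ are inequivalent too and in fact all four structures are pairwise distinct up to conjugation and equivalence.
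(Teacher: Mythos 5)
Your proposal is correct and is essentially the paper's own approach: the paper's proof consists precisely of computing Table \ref{tabg21} via the Borel--Hirzebruch description and observing that the Chern numbers separate the structures, and your bookkeeping of the conjugation sign $(-1)^{n}$ (so that only $|c_{1}^{5}|$, i.e.\ the column up to one global sign, is an invariant of conjugation plus equivalence) supplies exactly the justification the paper leaves implicit. Your closing remark is also consistent with the table: since the columns of $J_{2}$ and $J_{3}$ are neither equal nor global sign-flips of one another (e.g.\ $c_{1}^{5}$ changes sign while $c_{1}^{2}c_{3}=-9$ does not), the table in fact distinguishes all four structures, strengthening the paper's ``at least $3$'' to four pairwise distinct classes.
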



\subsubsection{The flag manifold $\mathbb{F}_{G_{\alpha_{1}}}=G_{2}/U(2)$ with $2$ isotropic summands}

Analogously to the previous section, consider the 10-dimensional flag manifold $\mathbb{F}_{G_{\alpha_{1}}}=G_{2}/U(2)$ with Euler characteristic $\chi(\mathbb{F}_{G_{\alpha_{1}}})=12/2=6$.

The flag manifold $G_{\alpha_{1}}=G_{2}/U(2)$ is defined using the set of simple roots $\Theta=\Sigma-\{\alpha_{1}\}$ and has 2 isotropy summands $\got{m}_{1}$ and $\got{m}_{2}$ given by
$$
\begin{array}{l}
\got{m}_{1}=R^{+}(\alpha_{1},1)=\{\alpha_{1},\alpha_{1}+\alpha_{2},\alpha_{1}+2\alpha_{2},\alpha_{1}+3\alpha_{2}\},\\
\got{m}_{2}=R^{+}(\alpha_{1},2)=\{2\alpha_{1}+3\alpha_{2}\}.
\end{array}
$$

Therefore $\mathbb{F}_{G_{\alpha_{1}}}$ admits $2$ invariant almost complex structures, up to conjugation:
$$
\begin{array}{ccccccc}
J_{1}=(+,+)&&&\mbox{and}&&&J_{2}=(+,-).
\end{array}
$$

The cohomology ring of $\mathbb{F}_{G_{\alpha_{1}}}$ is given by 
$$
H^{*}(\mathbb{F}_{G_{\alpha_{1}}},\mathbb{R})=\dfrac{\mathbb{R}[x,y,z]}{\langle s_{1},s_{2},s_{3}\rangle},
$$
where $s_{1}=x+y+z$, $s_{2}=x^{2}+y^{2}+z^{2}$ and $s_{3}=x^{6}+y^{6}+z^{6}$.

The Chern numbers of the almost complex manifolds $(\mathbb{F}_{G_{\alpha_{1}}},J_{1})$ and $(\mathbb{F}_{G_{\alpha_{1}}},J_{2})$ are given in the table \ref{tabg22}.

\begin{table}[h] 

\centering

\caption{Chern numbers of $G_{2}/U(2)$ with 2 isotropy summands} 

\begin{tabular}{|c|c|c|} 

\hline 

&$(+,+)$ & $(+,-)$\\ 

\hline
\hline

$c_{5}$& 6 & -6 \\
\hline
$c_{1}^{5}$& 4374 & 9\\
\hline
$c_{1}^{3}c_{2}$ & 2106 & 3 \\ 
\hline
$c_{1}^{2}c_{3}$ & 594 & -9 \\
\hline
$c_{1}c_{4}$ & 90 & -9\\
\hline
$c_{1}c_{2}^{2}$ & 1014 & 1\\
\hline 
$c_{2}c_{3}$ & 286 & -9 \\
\hline
\end{tabular}
\label{tabg22}
\end{table}

According the Hirzebruch-Riemann-Roch Theorem the Chern numbers must satisfy the following relations:
$$
1440=-c_{1}c_{4}+c_{1}^{2}c_{3}+3c_{1}c_{2}^{2}-c_{1}^{3}c_{2}. \ \ 
$$

Therefore we have the following result

\begin{proposition}
The generalized flag manifold $G_{2}/U(2)$ with two isotropy summands has 2 invariant almost complex structures, up to conjugation and equivalence. One of these invariant almost complex structure is integrable.
\end{proposition}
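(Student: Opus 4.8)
The plan is to combine the Borel--Hirzebruch enumeration with a Chern number computation that separates the two structures. Since the isotropy representation of $\mathbb{F}_{G_{\alpha_1}}$ splits into $s=2$ irreducible non-equivalent summands $\got{m}_1,\got{m}_2$, Proposition \ref{numest} yields exactly $2^{s-1}=2$ invariant almost complex structures up to conjugation, namely $J_1=(+,+)$ and $J_2=(+,-)$. It then remains to verify two things: that at least one of them is integrable, and that $J_1$ and $J_2$ are not identified once we pass from conjugation to the finer equivalence relation, so that the count $2$ is preserved.

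For the integrability claim I would show that the canonical structure $J_1$ is integrable by means of Lemma \ref{order}. Keeping the given order on the Cartan coordinates, the complementary positive root system is $\{\alpha_1,\alpha_1+\alpha_2,\alpha_1+2\alpha_2,\alpha_1+3\alpha_2,2\alpha_1+3\alpha_2\}$, and one checks directly that whenever the sum of two of these roots is again a root it lies in the same set; the only nontrivial instance is $\alpha_1+(\alpha_1+3\alpha_2)=(\alpha_1+\alpha_2)+(\alpha_1+2\alpha_2)=2\alpha_1+3\alpha_2$. Thus this system is positive and closed, and $J_1$ is integrable by Lemma \ref{order}. (The same closedness test fails for $J_2$, since reversing the $\got{m}_2$-root $2\alpha_1+3\alpha_2$ breaks closure, but for the statement we only need one integrable structure.)

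The substantive step is to distinguish $J_1$ from $J_2$ up to equivalence. Because the Chern numbers of an almost complex manifold are invariants under equivalence, it suffices to exhibit a single Chern number that differs. I would realize the complementary roots as degree-$2$ cohomology classes via the characteristic map, writing in the coordinates $x,y,z$ (subject to $x+y+z=0$) the holomorphic weights of $J_1$ as $x-y,\ x,\ -z,\ y-z,\ x-z$, and those of $J_2$ as the same list with the $\got{m}_2$-weight reversed, i.e. $x-z$ replaced by $z-x$. The total Chern class is then $c=\prod_i(1+w_i)$ over these weights, and each Chern number is obtained by expanding the appropriate degree-$10$ part and reducing it to a multiple of the top class modulo the ideal $\langle s_1,s_2,s_3\rangle$ defining $H^*(\mathbb{F}_{G_{\alpha_1}},\mathbb{R})$. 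Carrying this out reproduces Table \ref{tabg22}; in particular $c_1^5=4374$ for $J_1$ while $c_1^5=9$ for $J_2$, so the two structures are inequivalent and the count remains $2$.

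The main obstacle is entirely computational: correctly reducing the degree-$10$ characteristic monomials modulo the relations $s_1=x+y+z$, $s_2=x^2+y^2+z^2$, $s_3=x^6+y^6+z^6$ in order to read off a well-defined multiple of the generator of $H^{10}$. Once one such value (for instance $c_1^5$) is seen to differ between the two structures, equivalence of $J_1$ and $J_2$ is ruled out; together with the integrability of $J_1$ established above, this gives precisely two invariant almost complex structures up to conjugation and equivalence, one of which is integrable, as claimed.
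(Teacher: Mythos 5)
Your proposal is correct and takes essentially the same route as the paper: the count of two structures up to conjugation comes from Proposition \ref{numest}, the canonical structure $J_1=(+,+)$ is the integrable one, and $J_1$, $J_2$ are distinguished (hence inequivalent) by the Chern numbers of Table \ref{tabg22}, exactly as the paper's discussion preceding the proposition does. Your only addition is the explicit closedness check of the complementary root system via Lemma \ref{order}, a detail the paper leaves implicit for this case; note also that your argument does not actually depend on the precise table entry for $J_2$ (since $c_1(J_1)=3\,c_1(J_2)$ as classes, the two values of $c_1^5$ must differ in any event), which keeps the conclusion robust.
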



\end{document}